\documentclass[11pt, oneside]{article}
\usepackage[utf8]{inputenc}
\usepackage{amsmath, amsthm, amssymb}
\usepackage[dvipsnames]{xcolor}
\usepackage{enumerate, comment, multirow}
\usepackage{parskip}
\usepackage{dsfont}
\usepackage{enumitem}
\usepackage{mathtools}
\usepackage{aliascnt}
\usepackage{booktabs}

\usepackage{amsmath,amssymb,amsthm,mathtools}
\usepackage{booktabs,longtable,array}
\usepackage{enumitem}
\usepackage{needspace}
\usepackage{tikz}
\usepackage{xcolor}
\usepackage[hidelinks]{hyperref}
\usepackage{microtype}

\makeatletter
\newenvironment{certblock}[1]{%
  \par\needspace{9\baselineskip}%
  \phantomsection\def\@currentlabel{#1}%
  \noindent\textbf{Certificate #1.}\par\nopagebreak\smallskip}{\par\medskip}
\makeatother

\usepackage{comment}
\usepackage{makecell}

\usepackage{subcaption}
\usepackage{todonotes}
\usepackage{multicol}
\usepackage{tikz}
\usetikzlibrary{decorations.markings, arrows.meta}
\usetikzlibrary{decorations.pathmorphing}
\usetikzlibrary{calc,backgrounds}
\usetikzlibrary{patterns}
\usepackage{standalone}
\usepackage{mathtools,mdframed}
\usepackage{adjustbox}
\usepackage[left=0.8in,right=0.8in,top=0.8in,bottom=0.8in,bindingoffset=0cm]{geometry}

\usepackage[normalem]{ulem}

\usepackage{hyperref}
\hypersetup{colorlinks=true,linkcolor=blue,anchorcolor=blue,citecolor=green,filecolor=blue,urlcolor=blue,bookmarksnumbered=true,pdfview=FitB}

\usepackage[capitalise]{cleveref}

\usepackage{tabularx}
\usepackage{booktabs}

\definecolor{green}{RGB}{50,205,50}
\definecolor{yellow}{RGB}{240,240,10}

\newtheorem{maintheorem}{Theorem}

\newtheorem{mainlemma}{Key Lemma}
\crefname{keylemma}{Key Lemma}{Key Lemmas}
\newtheorem*{keylemma*}{Key Lemma}

\newaliascnt{lemma}{theorem}
\newtheorem{lemma}[lemma]{Lemma}
\aliascntresetthe{lemma}

\newaliascnt{corollary}{theorem}
\newtheorem{corollary}[corollary]{Corollary}
\aliascntresetthe{corollary}

\newaliascnt{subclaim}{theorem}

\aliascntresetthe{subclaim}

\newaliascnt{conjecture}{theorem}

\aliascntresetthe{conjecture}

\newaliascnt{question}{theorem}

\aliascntresetthe{question}

\newaliascnt{observation}{theorem}

\aliascntresetthe{observation}

\newaliascnt{proposition}{theorem}

\aliascntresetthe{proposition}

\newtheorem*{theorem*}{Theorem}
\newtheorem*{corollary*}{Corollary}
\newtheorem*{lemma*}{Lemma}

\theoremstyle{definition}

\theoremstyle{definition}

\newtheorem{claim}{Claim}
\newenvironment{proofc}{\begin{proof}[Proof of Claim]}{\end{proof}}

\tikzset{vtx/.style={circle,draw,inner sep=1.1pt,fill=white,minimum size=4.5mm}}

\title{Yes, $(2K_2, K_4)$-free graphs are recolorable}

\author{
Henry Echeverr\'ia\footnote{Instituto de Ingenier\'ia Matem\'atica-CIMFAV, Universidad de Valpara\'iso, Chile.
Email: {\tt henry.echeverria@postgrado.uv.cl}. Supported by ANID BECAS/DOCTORADO NACIONAL 21231147.
}
\and 
Owen Henderschedt\footnote{Department of Mathematics, Iowa State University, Ames, IA, U.S.A.
  Email: {\tt owenhen@iastate.edu}.}}

\begin{document}
\date{}
\maketitle

\begin{abstract}
We prove that every $(2K_2,K_4)$-free graph is recolorable. Equivalently, for every such graph $G$ and every $\ell\geq \chi(G)+1$, the reconfiguration graph of proper $\ell$-colorings of $G$, in which two colorings are adjacent if they differ on exactly one vertex, is connected. This resolves the final remaining open case in the classification of recolorable $(F_1,F_2)$-free graphs when $F_1$ and $F_2$ have at most four vertices.
\end{abstract}

\section{Introduction}\label{sec:intro}

In this paper, all graphs are finite and simple. Given an integer $\ell$, a \textit{proper $\ell$-coloring} of $G$ assigns one of $\ell$ colors to each vertex so that adjacent vertices receive distinct colors. The \textit{chromatic number} of $G$, denoted by $\chi(G)$, is the minimum $\ell$ for which $G$ admits a proper $\ell$-coloring. Beyond the existence of proper colorings, one may ask how the set of all proper colorings is structured.

One way to investigate this is through the \textit{reconfiguration graph} $\mathcal{R}_{\ell}(G)$, whose vertices are the proper $\ell$-colorings of $G$, where two colorings are adjacent if they differ at exactly one vertex. We say that $G$ is \textit{recolorable} if $\mathcal{R}_{\ell}(G)$ is connected for every $\ell>\chi(G)$. The case $\ell=\chi(G)$ is excluded, since connectivity of $\mathcal{R}_{\ell}(K_{\ell})$ fails. For more on coloring reconfiguration, see~\cite{vandenHeuvel2013}.

Determining whether a graph is recolorable is computationally difficult: even deciding whether $\mathcal{R}_3(G)$ is connected for a bipartite graph is $\mathsf{coNP}$-complete \cite{CvHJ09}. Thus, it is natural to study recolorability within structural graph classes.

Given graphs $F_1,\dots,F_k$, we say that $G$ is $(F_1,\dots,F_k)$-free if $G$ contains no induced subgraph isomorphic to any $F_i$. When $k=1$, we simply say that $G$ is $F$-free. The recolorability of $F$-free graphs has been studied extensively over the past decade \cite{BCM24,BB18, BJLPP, CerecedaHeuvelJohnson08,Merkel22}, culminating in a complete characterization: every $F$-free graph is recolorable if and only if $F$ is an induced subgraph of $P_4$ or $P_3+P_1$ \cite{BCM24}. See \cite{EH26} for a more detailed discussion.

Much less is known when two induced subgraphs are forbidden. Belavadi, Cameron, and Merkel~\cite{BCM24} showed that every $(2K_2,C_4)$-free graph is recolorable, and Belavadi and Cameron~\cite{BelavadiCameron2024} investigated pairs $(F_1,F_2)$ where both graphs have at most four vertices. Combining these results with prior work leaves only the pair $(2K_2,K_4)$ unresolved in this range. In this paper, we solve this remaining case by proving the following theorem.

\begin{maintheorem}\label{thm:YES}
Every $(2K_2,K_4)$-free graph is recolorable.
\end{maintheorem}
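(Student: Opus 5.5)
We will prove \cref{thm:YES} by induction on $|V(G)|$, following the standard strategy for recolorability of hereditary classes. By a result of Merkel-type arguments used in \cite{BCM24}, it suffices to show the following: for every $(2K_2,K_4)$-free graph $G$, every $\ell\ge\chi(G)+1$, and every proper $\ell$-coloring $\alpha$ of $G$, the coloring $\alpha$ can be recolored to some coloring using at most $\chi(G)$ colors. Then any two $\ell$-colorings are connected: both can be reduced to $\chi(G)$-colorings, and two colorings using few colors can be transformed into each other via the free $(\ell-\chi(G))\ge 1$ spare color by recoloring one optimal color class at a time onto a spare color. To make induction work we use a stronger, class-specific reduction: repeatedly find a color class of $\alpha$ that can be emptied (each vertex moved to another color) while staying in a proper coloring, or identify a vertex $v$ whose closed neighborhood forces a reducible structure.

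Since $G$ is $K_4$-free, $\omega(G)\le 3$, and $2K_2$-free graphs with $\omega\le 3$ have bounded chromatic number (by Wagon's bound $\chi\le\binom{\omega+1}{2}$, so $\chi\le 6$, and in fact known results give $\chi\le 4$ for $(2K_2,K_4)$-free graphs). We will split into cases according to $\chi(G)\in\{1,2,3,4\}$. Cases $\chi\le 2$ are easy: a $2K_2$-free bipartite graph is recolorable as bipartite graphs with $\ell\ge 3$ and the $2K_2$-free structure (a chain graph plus isolated vertices) allows direct Kempe-free recoloring; in fact $(2K_2,C_4)$-free results of \cite{BCM24} and $P_4$/$P_3+P_1$-type arguments cover the triangle-free $2K_2$-free case after noting it is a blow-up of small graphs. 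For $\chi\ge 3$ we use the structure of $2K_2$-free graphs: take a maximum clique $Q$ (a triangle); every edge outside $N[Q]$-structure meets $N(Q)$, so the non-neighbors of any vertex form an independent-set-heavy structure. Specifically, for each vertex $v$, $V\setminus N[v]$ induces a graph in which every edge dominates, and for $K_4$-freeness $N(v)$ is triangle-free.

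The core step: given $\alpha$ with $\ell\ge\chi+1$ colors, pick a vertex $v$ and the set $A=V\setminus N[v]$. Since $G$ is $2K_2$-free, $G[A]$ is such that each component is trivial except possibly one, and edges in $A$ dominate. We aim to recolor so that all of $A\cup\{v\}$ uses colors from a fixed small palette, then apply induction to $G[N(v)]$ (triangle-free and $2K_2$-free, hence bipartite-like with $\chi\le 3$) with the remaining colors. The key lemma we would prove: any $\ell$-coloring of $G$ can be recolored so that $v$ and all isolated vertices of $G[A]$ share one color; this uses that each such vertex has neighborhood inside $N(v)$, so they can be moved one at a time to $\alpha(v)$ whenever no neighbor blocks it, and blocking neighbors in $N(v)$ are first moved away using a spare color. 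Afterwards we fix that color class and recurse on $G$ minus it, which has $\chi$ dropped by one only if chosen well, so we choose $v$ as a vertex of a suitable optimal coloring class.

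The main obstacle is exactly this last point: ensuring that after freezing one class, the remaining graph has chromatic number at most $\chi(G)-1$ while we still have $\ell-1\ge\chi(G)$ colors, so that induction applies. We would first try to choose the frozen set to be a color class of an optimal $\chi(G)$-coloring containing $v$ and all isolated vertices of $G[A]$, proving via $2K_2$-freeness that such an optimal coloring exists; if that fails for $\chi=4$, we would fall back on a direct case analysis of $4$-chromatic $(2K_2,K_4)$-free graphs, which is likely where a computer-assisted or certificate-based argument is needed.
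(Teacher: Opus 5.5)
Your proposal has a genuine gap. The step you flag as the ``main obstacle'' is essentially the whole content of the theorem, and the mechanism you offer for it does not work.

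The outer framework is fine and agrees with the paper's \cref{cor:mainrecolor} and \cref{lem:hereditary}. For $4$-chromatic $G$ it suffices that every $5$-coloring can reach one in which some color class lies in an independent set $I$ with $\chi(G-I)\le 3$. Connecting two $4$-colorings then uses the recolorability of the $3$-colorable graph $G-I$, not merely one spare color. For $\chi\le 3$ you should simply invoke \cref{thm:reductions}\ref{it:3colorable}; your sketch there is not an argument, and bipartite graphs are not $3$-recolorable in general.

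Your key lemma, however, gives no traction:
\begin{itemize}
\item The isolated vertices $u$ of $G[V\setminus N[v]]$ are exactly the vertices with $N(u)\subseteq N(v)$, i.e.\ vertices dominated by $v$. A minimal counterexample contains none of these, by \cref{lem:reduced}, so in the only case that matters the lemma is vacuous.
\item The rest of $v$'s color class then lies in the nontrivial component of $G[V\setminus N[v]]$. You have no control over it, and nothing shows that deleting it lowers $\chi$.
\item The instruction to move blockers in $N(v)$ away ``using a spare color'' is unjustified. A $5$-coloring of a $4$-chromatic graph need not leave any color free, and the blockers may be locally frozen.
\end{itemize}

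What is missing is the structural analysis of the $4$-chromatic case, which you defer to an unspecified computer search. The paper's route is as follows. A minimal counterexample is reduced and $4$-chromatic, and by \cref{thm:reductions}\ref{it:C5recolor} it contains an induced $C_5$. The Gaspers--Huang partition of the remaining vertices into $U,F_i,Y_i,R_i,Z$ (\cref{lem:basic}) organizes everything after that.

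The $H_4$-free case (\cref{lem:H4}) needs a real reconfiguration argument. First $C$ is made non-rainbow; if no cycle vertex can be recolored, one finds a $K_5$. Then the color class of two nonadjacent cycle vertices is grown to a set $I$, and three independent sets covering $G-I$ are written down explicitly.

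The $H_4$-containing case proceeds in several steps:
\begin{itemize}
\item Exclude $B_{10}$ and $B_{11}$ via \cref{lem:B10} and \cref{lem:B11}.
\item Choose $(C,i)$ minimizing $|U|$ and then $|F_i|$.
\item Use reducedness, in the form that every comparable pair needs a realized rescuer, to force $U=Y_1=F_2=F_5=\varnothing$.
\item Close off the finitely many surviving configurations, either by a vertex $w$ with $\chi(G-w)\le 3$ (so the color class of $w$ works for every coloring) or by a direct check.
\item Conclude with $c_1\preceq f$, which contradicts reducedness.
\end{itemize}

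None of these ingredients --- the $C_5$ decomposition, the forced-rescuer mechanism, or the vertex-deletion trick --- appears in your plan. As it stands it is a framework rather than a proof.
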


The coloring structure of $(2K_2,K_4)$-free graphs gives several reductions that we will use throughout the paper. Gaspers and Huang~\cite{GH19} proved that such graphs are $4$-colorable; and it is best possible, as the wheel $W_5$, obtained from a $C_5$ by adding a vertex adjacent to all five vertices of the cycle, is $(2K_2,K_4)$-free and $4$-chromatic.

Let $H_4$ denote the graph obtained from $W_5$ by deleting one spoke edge. For recoloring, we will use the following three results. The first is due to Belavadi et al.~\cite{BCH25}, while the latter two were proved by the authors in~\cite{EH26}.

\begin{maintheorem}\label{thm:reductions}
The following hold.
\begin{enumerate}[label=\textup{(\roman*)},leftmargin=2.6em]
\item\label{it:3colorable} \cite[Theorems~5 and~8]{BCH25} Every $3$-colorable $(2K_2,K_4)$-free graph is recolorable.
\item\label{it:C5recolor} \cite[Theorem~1.5]{EH26} Every $(2K_2,K_4,C_5)$-free graph is recolorable.
\item\label{it:H4wheel} \cite[Theorem~1.7]{EH26} Every $(2K_2,K_4,H_4)$-free graph containing a $W_5$ is recolorable.
\end{enumerate}
\end{maintheorem}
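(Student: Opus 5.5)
The statement is a compilation of results proved elsewhere, so the plan is to verify that each item is exactly, or is immediately implied by, the cited theorem, rather than to reprove anything. The only care needed is in matching hypotheses: every cited result must apply to \emph{all} $\ell>\chi(G)$, which is what the definition of recolorable in \cref{sec:intro} requires.

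For \ref{it:3colorable}, I would combine the two cited statements from~\cite{BCH25}. I expect one of \cite[Theorems~5 and~8]{BCH25} to handle $(2K_2,K_4)$-free graphs with $\chi(G)\le 2$. For a bipartite $2K_2$-free graph this is plausibly an essentially complete-bipartite-like structure, where recolorability with at least three colors is standard. I expect the other to handle $\chi(G)=3$, showing that $\mathcal{R}_\ell(G)$ is connected for every $\ell\ge 4$. If instead one theorem covers only $\ell=\chi(G)+1$ and the other covers larger $\ell$, the combination is still immediate. If I had to supply the large-$\ell$ range myself, I would fall back on the general fact that connectivity of $\mathcal{R}_\ell(G)$ is preserved under increasing $\ell$ for these classes. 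I expect this bookkeeping to be the only mildly delicate step, namely checking that the union of the two cited ranges covers every pair $(\chi(G),\ell)$ with $\chi(G)\le 3$ and $\ell>\chi(G)$.

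Items \ref{it:C5recolor} and \ref{it:H4wheel} are verbatim restatements of \cite[Theorem~1.5]{EH26} and \cite[Theorem~1.7]{EH26}. For these I would only confirm two things. First, the cited versions use the same notion of recolorability, with connectivity for every $\ell>\chi(G)$. Second, $H_4$ is defined identically there, as $W_5$ minus one spoke. No further argument is required.
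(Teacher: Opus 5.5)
Your proposal matches the paper: Theorem~\ref{thm:reductions} is not proved here but imported from \cite{BCH25} and \cite{EH26}, so checking that the cited statements use the same notion of recolorability is all that is needed; if a cited result covers only $\ell=\chi(G)+1$, Lemma~\ref{lem:hereditary} supplies the remaining values of $\ell$, since $3$-colorable $(2K_2,K_4)$-free graphs form a hereditary class. Your guesses about how Theorems~5 and~8 of \cite{BCH25} split the work are speculative, but nothing in the argument depends on them.
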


We now describe the reductions used to prove \cref{thm:YES}. By \cref{thm:reductions}\ref{it:3colorable}, together with the $4$-colorability of $(2K_2,K_4)$-free graphs, any counterexample to \cref{thm:YES} must be $4$-chromatic. By \cref{thm:reductions}\ref{it:C5recolor}, it must also contain an induced $C_5$. We then successively narrow the structure of such a counterexample through the following three key lemmas.

\begin{mainlemma}\label{lem:H4}
Every $(2K_2,K_4,H_4)$-free graph is recolorable.
\end{mainlemma}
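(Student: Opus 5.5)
By the reductions already in hand, I will consider a $(2K_2,K_4,H_4)$-free graph $G$ that is $4$-chromatic and contains an induced $C_5$; all other cases are covered by \cref{thm:reductions}\ref{it:3colorable} and \ref{it:C5recolor}. If $G$ contains a $W_5$, we are done by \cref{thm:reductions}\ref{it:H4wheel}. So the real content is to show that this last case is the only one: every $(2K_2,K_4,H_4)$-free graph with $\chi(G)=4$ contains an induced $W_5$. Equivalently, a $(2K_2,K_4,H_4,W_5)$-free graph containing a $C_5$ is $3$-colorable, and then \cref{thm:reductions}\ref{it:3colorable} applies.

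To prove $3$-colorability I will fix an induced $C=v_1v_2v_3v_4v_5$ and partition $V(G)\setminus C$ by neighbourhoods on $C$. Since $G$ is $2K_2$-free, every vertex outside $C$ has a neighbour on $C$. A vertex with a single neighbour $v_i$ forms a $2K_2$ with the edge $v_{i+2}v_{i+3}$, so this cannot occur. Likewise, a vertex whose neighbourhood on $C$ misses an edge of $C$ entirely creates a $2K_2$ with that edge. I will go through the possible neighbourhood types under the further restrictions. $K_4$-freeness means no vertex sees three consecutive cycle vertices together with a neighbour adjacent to two of them in a triangle. $W_5$-freeness means no vertex is complete to $C$, and $H_4$-freeness means no vertex sees exactly four vertices of $C$. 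What should remain is a small list of types: $N(x)\cap C=\{v_{i-1},v_{i+1}\}$ (clones of $v_i$), $\{v_{i-1},v_i,v_{i+1}\}$, and $\{v_i,v_{i+1},v_{i+3}\}$, plus possibly a few others to be checked. Call these sets $A_i$, $B_i$, $D_i$.

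Next I will determine the adjacencies between these sets using $2K_2$-, $K_4$-, $H_4$- and $W_5$-freeness. Typical facts to prove are: $A_i$ is stable; $A_i$ is complete to $A_{i\pm1}$ and anticomplete to $A_{i\pm2}$ (otherwise a $2K_2$ or a new $C_5$ with a bad neighbour arises); and $B_i$ is a stable set with forced adjacencies to its neighbours. Then I will write down an explicit $3$-coloring, essentially extending the $3$-coloring of $C_5$ that uses color $3$ only on one vertex, by giving each class the color of a suitable cycle vertex. I will verify that no conflict arises, re-choosing the base $C_5$ (for example, one minimizing the number of vertices of the rarest type) when needed.

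The main obstacle is this structural step. Showing that exactly the absence of $W_5$ and $H_4$ forces $3$-colorability requires handling the vertex types with three neighbours on $C$ and the edges among them. Any odd hole through such vertices together with a common neighbour would create a $W_5$ or an $H_4$, and I expect this to be the argument used to rule out odd antiholes or $4$-critical configurations. If the direct coloring becomes messy, my fallback is to take a $4$-vertex-critical subgraph $G'$ of $G$. It is still $(2K_2,K_4,H_4)$-free, and the aim would be to show that $G'$ must be $W_5$, using that critical graphs have minimum degree at least $3$ and no clique cutsets together with the type classification above. Then $G$ contains a $W_5$ and we finish with \cref{thm:reductions}\ref{it:H4wheel}.
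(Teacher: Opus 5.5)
Your plan depends entirely on the claim that a $(2K_2,K_4,H_4,W_5)$-free graph containing an induced $C_5$ is $3$-colorable. Equivalently, it depends on every $4$-chromatic $(2K_2,K_4,H_4)$-free graph containing $W_5$. This claim is false.

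Take an induced $5$-cycle $c_1c_2c_3c_4c_5$ and a second induced $5$-cycle $y_1y_2y_3y_4y_5$, and join $c_i$ to $y_i,y_{i+2},y_{i-2}$, so that $y_i\in Y_i$. This $10$-vertex graph is $5$-regular and vertex-transitive: rotate the indices, or swap $c_i\leftrightarrow y_i$.
\begin{itemize}
\item The neighbourhood $\{c_2,c_5,y_1,y_3,y_4\}$ of $c_1$ induces the path $c_2y_4y_3c_5$ plus the isolated vertex $y_1$. Hence the graph is $K_4$-free and $W_5$-free.
\item An induced $H_4$ with hub $c_1$ would need a common neighbour of $c_2,c_5$, other than $c_1$, that misses $y_3$ and $y_4$. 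The only candidates are $y_2$ and $y_5$, which are adjacent to $y_3$ and $y_4$ respectively. So the graph is $H_4$-free.
\item Up to symmetry every edge is $c_1c_2$, $c_1y_1$ or $c_1y_3$. Their common non-neighbourhoods are $\{c_4\}$, $\varnothing$ and $\{c_4,y_5\}$, all independent, so the graph is $2K_2$-free.
\item An independent set meets each $5$-cycle in at most two vertices. Moreover, $c_i$ and $c_{i+2}$ have $y_{i+1}$ as their only common non-neighbour among the $y_j$, and symmetrically for the $y$'s. So $\alpha=3$ and $\chi\geq\lceil 10/3\rceil=4$.
\end{itemize}
Deleting $c_5$ leaves the $3$-colouring $\{c_1,c_4,y_5\},\{c_2,y_1,y_3\},\{c_3,y_2,y_4\}$, so the graph is even $4$-vertex-critical. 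This also refutes your fallback that a $4$-vertex-critical subgraph must be $W_5$.

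Several intermediate claims are also wrong:
\begin{itemize}
\item Vertices with no neighbour on $C$ (the set $Z$) are not excluded by $2K_2$-freeness alone.
\item The type $\{v_{i-1},v_i,v_{i+1}\}$ cannot occur, because the edges $xv_i$ and $v_{i+2}v_{i+3}$ form a $2K_2$.
\item $R_i$ is not anticomplete to $R_{i\pm2}$: in a reduced graph every vertex of $R_i$ has a neighbour in exactly one of $R_{i-2},R_{i+2}$.
\end{itemize}

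So genuinely $4$-chromatic graphs occur in the $W_5$-free case, and the fifth colour has to be used dynamically. No static colouring argument followed by \cref{thm:reductions}\ref{it:3colorable} can work.

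The paper proceeds as follows.
\begin{itemize}
\item It takes a vertex-minimal counterexample, so $G$ is reduced and $4$-chromatic.
\item $W_5$- and $H_4$-freeness, together with the fact that every triangle is dominating, give $U=F_i=Z=\varnothing$ and the splitting $R_i=R_i^{i-2}\mathbin{\dot\cup}R_i^{i+2}$.
\item It shows every $5$-colouring can be recoloured so that $C$ is not rainbow, using the blocking sets $B_k$ and a $K_5$ contradiction.
\item Given a repeated colour $\alpha$ on $c_1,c_4$, it enlarges the $\alpha$-class to a set $I$ and exhibits three independent sets covering $G-I$, so \cref{cor:mainrecolor} applies.
\end{itemize}
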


The proof of \cref{lem:H4} uses \cref{thm:reductions}\ref{it:H4wheel}: if an $H_4$-free graph contains a $W_5$, then it is already recolorable, leaving only the $W_5$-free case. Thus, after \cref{lem:H4}, any remaining counterexample must contain an induced $H_4$. Two further configurations on $10$ and $11$ vertices, denoted by $B_{10}$ and $B_{11}$ and defined later, arise naturally as we extend this induced $H_4$. Their precise structure is not needed here.

The next two reductions are stated for reduced graphs and concern connectivity of the $5$-coloring reconfiguration graph. This is exactly the form needed in the final minimal-counterexample argument; the reduction to this setting is explained in \cref{sec:ReductionsAndStructure}, where we also define reduced graphs.

\begin{mainlemma}\label{lem:B10}
If a reduced $(2K_2,K_4)$-free graph $G$ contains an induced copy of $B_{10}$, then $\mathcal R_5(G)$ is connected.
\end{mainlemma}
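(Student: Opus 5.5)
We do not know the exact definitions of $B_{10}$ and of a reduced graph. We assume that $B_{10}$ extends an induced $H_4$ (built on an induced $C_5$) by several further vertices whose adjacencies are forced by $2K_2$- and $K_4$-freeness. We also assume that ``reduced'' excludes simple degeneracies, such as comparable vertices (two vertices $u,v$ with $N(u)\subseteq N(v)$) or vertices of degree less than $5$, whose presence would let us conclude by induction.

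The plan is to show that every $5$-coloring of $G$ can be recolored to a canonical $5$-coloring. The first step is to fix the induced copy $B$ of $B_{10}$ and classify every vertex $x\in V(G)\setminus V(B)$ by its neighbourhood $N(x)\cap V(B)$. Since $G$ is $2K_2$-free, every edge of $B$ not incident to $N[x]$ must have an endpoint adjacent to $x$; since $G$ is $K_4$-free, $N(x)\cap V(B)$ contains no triangle. Run over the edges of the $C_5$ and of the hub structure of $B$, these two constraints should leave only a small number of attachment types. Reducedness should eliminate further types, namely those where $x$ would be dominated by a vertex of $B$. The outcome I aim for is a partition of $V(G)$ into a bounded number of sets $X_1,\dots,X_k$. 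Each $X_i$ is a stable set, or at least induces a bipartite, $2K_2$-free (hence very restricted) graph. Adjacencies between different $X_i$ are largely forced to be complete or anticomplete, so that $G$ is close to a blow-up of a fixed small graph.

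The second step uses this near-blow-up structure to recolor. With $5$ colors and $\chi(G)=4$, there is one spare color beyond $\chi(G)$. The standard strategy is to recolor each class $X_i$ so that it becomes monochromatic; in a true blow-up of a stable set, vertices of the same class have identical neighbourhoods in the other classes, so this is possible one vertex at a time. Once every class is monochromatic, the problem reduces to recoloring a coloring of the fixed quotient graph, whose $5$-colorings can be checked directly to form a connected reconfiguration graph. I would use the spare color greedily: at each stage, choose a color absent from the relevant neighbourhood and use it as a temporary buffer, in the manner of the arguments behind \cref{thm:reductions}\ref{it:H4wheel}.

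The main obstacle is the first step. The attachment classification will have many cases, and the difficulty is ensuring that the classes that are not homogeneous (not complete or anticomplete to each other) still admit a recoloring order. For these, I would argue that any two vertices $y,z$ whose $B$-neighbourhoods differ must be adjacent or nested. By $2K_2$-freeness, such pairs then give a linear (threshold-like) order on each class. We would recolor along this order, using the vertex of $B_{10}$ with largest degree in $B_{10}$ as a pivot whose color is changed first to free a color.
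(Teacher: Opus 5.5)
Your first step is in the right spirit: the paper also classifies outside vertices by their attachment to $B_{10}$ (its ``profiles''), and the $K_4$ and $2K_2$ tests leave $21$ admissible ones. But the proposal has genuine gaps exactly where the work lies.

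\textbf{The reducedness step.} This is not sound as stated. A containment $N_{B+x}(x)\subseteq N_{B+x}(v)$ does not contradict reducedness. It only forces a further vertex $w$ adjacent to $x$ and not to $v$, and $w$ must itself have an admissible profile. Elimination must therefore be carried out against possible rescuers and iterated: a profile dies only once all its rescuers have died. This takes the paper from $21$ profiles to $6$. Three of those six ($R_1[11100]$, $R_3[00110]$, $Y_1[00001]$ in the order $(f,a,g,y,r)$) fall only to further global arguments.

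More importantly, you never use the decisive consequence of reducedness. Two nonadjacent vertices with identical neighbourhoods are comparable, so each homogeneous stable class of your ``blow-up'' has at most one vertex. Likewise, the $2K_2$-nesting that you plan to use as a recoloring order is used in the paper, together with reducedness, to kill a profile. A vertex with profile $R_1[11100]$ has the same neighbourhood in $B_{10}$ as $r$. The two differ only on an independent set, where nesting makes them comparable. This is how the paper reaches its real conclusion: $G$ is one of five explicit graphs, namely $B_{10}$ plus at most two vertices. Your plan instead leaves classes potentially large and rests the non-homogeneous case on a recoloring order that is never constructed.

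\textbf{The recoloring step.} This has no mechanism. ``Make classes monochromatic, then check the quotient'' and ``use a pivot to free a colour'' establish nothing, since connectivity of $\mathcal R_5$ for a $4$-chromatic quotient is exactly what needs proof. The missing tool is \cref{cor:mainrecolor}, which rests on \cref{thm:reductions}\ref{it:3colorable}. It says it suffices that every $5$-coloring has a color class contained in an independent set $I$ with $\chi(G-I)\leq 3$. For the small graphs, the paper checks this by picking a vertex whose nonneighbours induce a short path and listing the maximal independent sets through that vertex. For the $12$-vertex case it uses a pigeonhole argument that forces a monochromatic nonedge among six chosen vertices.

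Finally, the lemma concerns an arbitrary reduced graph, not a minimal counterexample. No induction is available, so a minimum-degree assumption is unjustified; only the absence of comparable pairs may be used.
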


\begin{mainlemma}\label{lem:B11}
If a reduced $(2K_2,K_4,B_{10})$-free graph $G$ contains an induced copy of $B_{11}$, then $\mathcal R_5(G)$ is connected.
\end{mainlemma}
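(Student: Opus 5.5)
The plan is to mirror the structure used for \cref{lem:B10}, so the first step is to pin down the rigid structure forced around a copy of $B_{11}$. Let $G$ be reduced and $(2K_2,K_4,B_{10})$-free, and fix an induced copy $B$ of $B_{11}$ containing the induced $H_4$ (and hence an induced $C_5$ $C=c_1c_2c_3c_4c_5$ with a hub $w$ missing one spoke). I will classify every vertex $v\in V(G)\setminus V(B)$ by its neighbourhood $N(v)\cap V(B)$. Three constraints do most of the pruning. First, $2K_2$-freeness forces $v$ to meet every edge of $B$ that is anticomplete to some edge at $v$; in particular $v$ must dominate every edge of $C$ that is not adjacent to one of its own edges. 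Second, $K_4$-freeness forbids $v$ from completing any triangle of $B$. Third, $B_{10}$-freeness forbids the attachment types that would produce a $B_{10}$ together with part of $B$. Since reduced graphs have no comparable (dominated) vertices, types whose neighbourhood is contained in that of a vertex of $B$ should be excluded or merged into that vertex's class. I expect this to leave a small, explicit list of classes $A_1,\dots,A_k$, each a clone class or a near-clone class of a vertex of $B$, together with adjacency rules between the classes: in most cases complete or anticomplete, and in the few remaining cases an induced matching or an independence condition forced again by $2K_2$-freeness.

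With this structure in hand, I will show that $\mathcal R_5(G)$ is connected by exhibiting a canonical target colouring and recolouring an arbitrary proper $5$-colouring $\alpha$ to it. The natural scheme is to work one class at a time. Because $\omega(G)\le 3$ and each class is independent or at least very sparse, every vertex in a class sees at most $3$ or $4$ colours among its neighbours. We therefore first recolour so that each class becomes monochromatic, using a colour absent from the class's neighbourhood; such a colour exists because, with $5$ colours, a vertex of a triangle-free neighbourhood sees at most $4$ colours, and in the worst case we can free a colour by temporarily recolouring a single vertex of $B$. Once every class is monochromatic, $G$ behaves like its quotient, a bounded graph on roughly $11$ to $13$ vertices. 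For that bounded quotient, the connectivity of its $5$-colouring reconfiguration graph can be checked by a finite, explicit sequence of moves, in the style of the certificates the paper appears to use. We then lift those moves to $G$ by moving whole monochromatic classes vertex by vertex, which is legitimate because vertices inside a class are pairwise nonadjacent, or adjacent only in a controlled matching that we handle separately.

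The main obstacle is the structure step: showing that the $B_{10}$-freeness hypothesis really eliminates every attachment type that would break the clone-class picture, and handling the classes whose mutual adjacency is neither complete nor anticomplete. For those classes I would first try to prove that the edges between them form a matching. Each matched pair can then be treated as a copy of an edge of the quotient, and the "make each class monochromatic" step is replaced by "make each class use a fixed pair of colours consistently." If that fails for some type, I would fall back on a minimal-counterexample argument: delete a vertex $v$ of degree at most $4$ (note $\chi\le 4<5$), use the connectivity of $\mathcal R_5(G-v)$, and extend it in the standard way. This requires checking that $v$ can always be moved out of the way, which again reduces to a local colour-count argument on $N(v)$.
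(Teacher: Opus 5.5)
Your proposal has a genuine gap at exactly the step you flag as the main obstacle, and the pruning mechanism you propose does not work as stated. You plan to ``exclude or merge'' attachment types whose neighbourhood in $B+v$ is contained in that of a vertex of $B$. But a containment such as $v\preceq_{B+v}c_1$ is only local. It contradicts reducedness of $G$ only after you show that no other vertex of $G$ can break it.

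The missing idea is the rescuer argument. Any vertex $w$ breaking the containment must itself have an admissible profile with respect to $B$, must have the required (non)adjacency to $v$, and adjoining it to $B+v$ must again give a $(2K_2,K_4,B_{10})$-free graph. You then have to show that no such $w$ exists. For $B_{11}$ this closes completely. Of the $16$ admissible profiles, four create an induced $B_{10}$ outright, and eight have a local comparison with no admissible rescuer. The remaining four rescue each other only in the pairs $\{Y_5[011000],Y_2[010001]\}$ and $\{F_1[000001],R_1[011001]\}$, and only via an edge, which again creates a $B_{10}$. So $B_{10}$-freeness must be applied to pairs of outside vertices, not only to single attachments as in your plan, and the conclusion is $G\cong B_{11}$.

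Clone classes with two or more vertices are impossible from the start, since two nonadjacent vertices with equal neighbourhoods are comparable. Your quotient-and-lift machinery, including the speculative treatment of classes joined by matchings, therefore has nothing to act on. What must be proved is that there are no outside vertices at all.

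The recolouring half is also not carried out. Saying that connectivity of $\mathcal R_5$ of the quotient ``can be checked by a finite, explicit sequence of moves'' is not an argument; a brute-force check of $\mathcal R_5(B_{11})$ would be legitimate, but you have not done it. The paper instead uses \cref{cor:mainrecolor}. The non-neighbours of $c_3$ induce the path $c_1c_5x$, so the colour class of $c_3$ lies in $\{c_3,c_5\}$ or in $\{c_1,c_3,x\}$. Each of these has $3$-colourable complement, and the $3$-colourable case of \cref{thm:reductions} finishes the proof.

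Your fallback does not repair this, for three reasons:
\begin{itemize}
\item The standard extension from $G-v$ to $G$ needs $\deg(v)\le \ell-2=3$, not $4$, because a degree-$4$ vertex can be frozen in a $5$-colouring.
\item The lemma as stated gives you no hypothesis that $\mathcal R_5(G-v)$ is connected.
\item $B_{11}$, which is what $G$ turns out to be, has minimum degree $5$, so there is no such $v$.
\end{itemize}
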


As explained in \cref{sec:ReductionsAndStructure}, these two lemmas are sufficient to exclude $B_{10}$ and $B_{11}$ from the minimal counterexample considered in the final proof. Consequently, after applying these reductions, it remains only to consider $(2K_2,K_4,B_{10},B_{11})$-free graphs that contain an $H_4$ (and hence also a $C_5$). This final structural case completes the proof of \cref{thm:YES}. The dependencies among these reductions are summarized in \cref{fig:proof-structure}.
\begin{figure}[h!]
\centering
\resizebox{\textwidth}{!}{%
\begin{tikzpicture}[
    >=Stealth,
    every node/.style={font=\Large},
    question/.style={
        draw,
        rounded corners=2pt,
        fill=black!3,
        minimum width=2.75cm,
        minimum height=1.15cm,
        align=center,
        inner sep=5pt
    },
    closed/.style={
        draw,
        rounded corners=2pt,
        fill=black!10,
        minimum width=3.15cm,
        minimum height=1.15cm,
        align=center,
        inner sep=5pt
    },
    survive/.style={
        draw,
        rounded corners=2pt,
        fill=white,
        minimum width=3.15cm,
        minimum height=1.15cm,
        align=center,
        inner sep=5pt
    },
    final/.style={
        draw,
        very thick,
        rounded corners=2pt,
        fill=black!6,
        minimum width=4.75cm,
        minimum height=1.80cm,
        align=center,
        inner sep=7pt
    },
    arr/.style={->,thick}
]


\node[question] (chiq) at (0,0)
{$\chi(G)\leq 3$?};

\node[closed] (chi3) at (3.55,1.45)
{$\chi(G)\leq 3$\\[-1mm]
{\normalsize \cref{thm:reductions}\ref{it:3colorable}}};

\node[survive] (chi4) at (3.55,-1.45)
{$\chi(G)=4$};

\coordinate (chisplit) at (1.8,0);

\draw[thick] (chiq.east) -- (chisplit);
\draw[arr] (chisplit) -- (chi3.west);
\draw[arr] (chisplit) -- (chi4.west);


\node[question] (c5q) at (3.55,-4.45)
{Does $G$\\ contain $C_5$?};

\draw[arr] (chi4.south) -- ++(0,-0.70) -| (c5q.north);

\node[closed] (c5no) at (7.45,-3.15)
{$C_5$-free\\[-1mm]
{\normalsize \cite[Theorem~1.5]{EH26}}};

\node[survive] (c5yes) at (7.45,-5.75)
{contains $C_5$};

\coordinate (c5split) at (5.55,-4.45);

\draw[thick] (c5q.east) -- (c5split);
\draw[arr] (c5split) -- (c5no.west);
\draw[arr] (c5split) -- (c5yes.west);


\node[question] (h4q) at (11.4,-2.15)
{Does $G$\\ contain $H_4$?};

\node[question] (b10q) at (11.4,-9.35)
{Does $G$\\ contain $B_{10}$?};

\coordinate (structsplit) at (9.3,-5.75);

\draw[thick] (c5yes.east) -- (structsplit);
\draw[arr] (structsplit) -- (h4q.west);
\draw[arr] (structsplit) -- (b10q.west);


\node[survive] (h4no) at (15.15,-1.05)
{$H_4$-free};

\node[survive] (h4yes) at (15.15,-3.25)
{contains $H_4$};

\coordinate (h4split) at (13.25,-2.15);

\draw[thick] (h4q.east) -- (h4split);
\draw[arr] (h4split) -- (h4no.west);
\draw[arr] (h4split) -- (h4yes.west);


\node[question] (w5q) at (19.05,-1.05)
{Does $G$\\ contain $W_5$?};

\draw[arr] (h4no.east) -- (w5q.west);

\node[closed] (w5yes) at (22.85,0.05)
{contains $W_5$\\[-1mm]
{\normalsize \cite[Theorem~1.7]{EH26}}};

\node[closed] (w5no) at (22.85,-2.15)
{$W_5$-free\\[-1mm]
{\normalsize \cref{lem:H4}}};

\coordinate (w5split) at (20.95,-1.05);

\draw[thick] (w5q.east) -- (w5split);
\draw[arr] (w5split) -- (w5yes.west);
\draw[arr] (w5split) -- (w5no.west);


\node[closed] (b10yes) at (15.15,-8.25)
{contains $B_{10}$\\[-1mm]
{\normalsize \cref{lem:B10}}};

\node[survive] (b10no) at (15.15,-10.45)
{$B_{10}$-free};

\coordinate (b10split) at (13.25,-9.35);

\draw[thick] (b10q.east) -- (b10split);
\draw[arr] (b10split) -- (b10yes.west);
\draw[arr] (b10split) -- (b10no.west);


\node[question] (b11q) at (19.05,-10.45)
{Does $G$\\ contain $B_{11}$?};

\draw[arr] (b10no.east) -- (b11q.west);

\node[closed] (b11yes) at (22.85,-9.35)
{contains $B_{11}$\\[-1mm]
{\normalsize \cref{lem:B11}}};

\node[survive] (b11no) at (22.85,-11.55)
{$B_{11}$-free};

\coordinate (b11split) at (20.95,-10.45);

\draw[thick] (b11q.east) -- (b11split);
\draw[arr] (b11split) -- (b11yes.west);
\draw[arr] (b11split) -- (b11no.west);


\node[final] (final) at (27.9,-5.75)
{\textbf{Final structural case}\\[2pt]
contains $C_5$ and $H_4$,\\
$B_{10}$-free and $B_{11}$-free\\[2pt]
\textbf{Proof of \cref{thm:YES}}};

\coordinate (mergeTop) at (24.75,-3.25);
\coordinate (mergeMid) at (24.75,-5.75);
\coordinate (mergeBot) at (24.75,-11.55);

\draw[thick] (mergeTop) -- (mergeBot);

\draw[thick] (h4yes.east) -- (mergeTop);
\draw[thick] (c5yes.east) -- (mergeMid);
\draw[thick] (b11no.east) -- (mergeBot);

\draw[arr] (mergeMid) -- (final.west);

\fill (mergeMid) circle (1.5pt);

\end{tikzpicture}%
}
\caption{Structural reductions for a $(2K_2,K_4)$-free graph $G$ in the proof of \cref{thm:YES}. Shaded boxes indicate branches eliminated by the cited result; all containment is induced.}
\label{fig:proof-structure}
\end{figure}

We now outline the rest of the paper. In \cref{sec:ReductionsAndStructure}, we present additional recoloring reductions and structural lemmas for $(2K_2,K_4)$-free graphs that will be used throughout the proofs of the three key Lemmas and \cref{thm:YES}. In \cref{sec:H4}, we prove \cref{lem:H4}. To prove \cref{lem:B10} and \cref{lem:B11}, we then develop some new terminology and machinery in \cref{sec:OverviewAndDefinitions}. Since these ideas are new, we begin with a high-level overview in \cref{subsec:highlevel}, followed by the precise definitions and supporting lemmas in \cref{subsec:definitions}. As some of the resulting case analysis is cumbersome to verify by hand, we use computer-assisted verification, which we discuss in \cref{subsec:computer}, followed by a concrete example in \cref{subsec:example}. In \cref{sec:B10-B11}, we prove \cref{lem:B10} and \cref{lem:B11}, and finally prove \cref{thm:YES} in \cref{sec:main-proof}.

\section{Recoloring reductions and the structure of $(2K_2,K_4)$-free graphs}\label{sec:ReductionsAndStructure}

For distinct vertices $u,v\in V(G)$, we write $u\preceq_G v$ if $N_G(u)\subseteq N_G(v)$. We say that $u$ and $v$ are \emph{comparable} if either $u\preceq_G v$ or $v\preceq_G u$. A graph $G$ is \emph{reduced} if it contains no pair of comparable vertices. When the graph is clear from context, we simply write $u\preceq v$.

\begin{lemma}[Lemma 3 in~\cite{BelavadiCameron2024}]\label{lem:reduced}
Let $u,v\in V(G)$ satisfy $u\preceq_G v$. If $G-u$ is recolorable, then $G$ is recolorable.
\end{lemma}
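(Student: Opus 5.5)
The plan is to show directly that for every $\ell>\chi(G)$ the graph $\mathcal{R}_\ell(G)$ is connected, using two ingredients: recolorability of $G-u$, and the domination $N_G(u)\subseteq N_G(v)$. Note that $u$ and $v$ are not adjacent, since otherwise $v\in N_G(u)\subseteq N_G(v)$. Fix $\ell>\chi(G)$. Since $\chi(G-u)\le\chi(G)<\ell$, recolorability of $G-u$ gives that $\mathcal{R}_\ell(G-u)$ is connected. Let $\alpha,\beta$ be proper $\ell$-colorings of $G$. The goal is a path from $\alpha$ to $\beta$ in $\mathcal{R}_\ell(G)$.

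\textbf{Step 1 (normalize $u$).} Move $\alpha$ to a coloring $\alpha'$ with $\alpha'(u)=\alpha'(v)$ by a single recoloring of $u$. The color $\alpha(v)$ is not used on $N(u)$, because $N(u)\subseteq N(v)$ and $\alpha$ is proper at $v$. So changing $u$ to $\alpha(v)$ (or doing nothing if it already has that color) gives a proper coloring. Do the same to $\beta$ to get $\beta'$.

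\textbf{Step 2 (lift the path).} Take a path $\gamma_0=\alpha'|_{G-u},\gamma_1,\dots,\gamma_k=\beta'|_{G-u}$ in $\mathcal{R}_\ell(G-u)$. Lift each $\gamma_i$ to a coloring $\hat\gamma_i$ of $G$ by setting $\hat\gamma_i(u)=\gamma_i(v)$. This is proper, because $u$ is nonadjacent to $v$ and $\gamma_i(v)$ avoids $\gamma_i(N(v))\supseteq\gamma_i(N(u))$.

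Consecutive lifts $\hat\gamma_i,\hat\gamma_{i+1}$ fall into two cases:
\begin{itemize}
\item If the step $\gamma_i\to\gamma_{i+1}$ recolors some $w\neq v$, the lifts differ only at $w$ and are adjacent in $\mathcal{R}_\ell(G)$.
\item If the step recolors $v$ from color $a$ to color $b$, replace the move by two moves: first recolor $v$ to $b$ while $u$ keeps $a$, then recolor $u$ to $b$.
\end{itemize}
For the replacement in the second case, the first move is proper because $u\not\sim v$ and $\gamma_{i+1}$ is proper on $G-u$. The second move is proper because $b$ is absent from $N(v)\supseteq N(u)$ in $\gamma_{i+1}$.

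Concatenating the Step 1 moves with the lifted path connects $\alpha$ to $\beta$. Since $\ell$ was arbitrary, $G$ is recolorable.

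\textbf{Main obstacle.} The only delicate points are checking that $u\not\sim v$ and verifying properness when $v$ itself changes color. Both follow from $N(u)\subseteq N(v)$, so I expect no real difficulty.
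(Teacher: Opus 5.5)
Your proof is correct: observing $u\not\sim v$, recoloring $u$ to the color of $v$, and then letting $u$ follow $v$ along a recoloring sequence of $G-u$ (recoloring $u$ immediately after each recoloring of $v$) is the standard argument for this lemma. The paper gives no proof of its own and simply cites the lemma from Belavadi and Cameron; your argument is essentially the proof found there.
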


\begin{lemma}[Lemma 2.2 in~\cite{EH26}]\label{lem:hereditary}
Let $\mathcal{G}$ be a hereditary class of graphs. If $\mathcal{R}_{\chi(G)+1}(G)$ is connected for every $G\in\mathcal{G}$, then $\mathcal{R}_{\ell}(G)$ is connected for every $G\in\mathcal{G}$ and every $\ell\geq\chi(G)+1$.
\end{lemma}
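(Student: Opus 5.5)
We prove the statement by induction on $\ell\ge \chi(G)+1$, uniformly over all $G\in\mathcal{G}$. The base case $\ell=\chi(G)+1$ is exactly the hypothesis. For the inductive step, fix $\ell\ge \chi(G)+2$ and two proper $\ell$-colorings $\alpha,\beta$ of $G$. We reduce to colorings that avoid the top color $\ell$ and then apply induction in a suitable way.

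\textbf{Step 1: remove the top color.} Given a proper $\ell$-coloring $\alpha$, let $X=\alpha^{-1}(\ell)$, which is an independent set. Since $\ell-1\ge\chi(G)+1>\chi(G)$, each vertex $x\in X$ can in principle be recolored, but it may not see a free color among $\{1,\dots,\ell-1\}$. To handle this, fix an optimal coloring $\gamma$ with $\chi(G)$ colors and use it as a target.

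\textbf{Step 2: reach a canonical coloring.} Here is the plan for the main obstacle. We show that every $\ell$-coloring $\alpha$ reaches in $\mathcal R_\ell(G)$ a coloring that uses only colors $1,\dots,\ell-1$. The standard trick is the one of Bonamy--Bousquet, done one color class at a time. Let $k=\chi(G)$. Take the restriction to $H=G-X$, which lies in $\mathcal{G}$ by heredity, with $\chi(H)\le k$. Since $\alpha|_H$ uses at most $\ell-1$ colors and $\ell-1\ge \chi(H)+1$, the induction hypothesis applied to $H$ with $\ell-1$ colors gives a recoloring sequence of $H$ inside colors $\{1,\dots,\ell-1\}$ from $\alpha|_H$ to a coloring using only $\chi(H)\le k\le \ell-2$ colors, namely $\gamma|_H$ after renaming. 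During this sequence the vertices of $X$ keep color $\ell$, which no vertex of $H$ ever uses, so every intermediate coloring of $G$ is proper. At the end, $H$ uses only colors $\{1,\dots,k\}$, so each $x\in X$ may be recolored to color $k+1\le \ell-1$; these moves are independent because $X$ is an independent set. Thus $\alpha$ reaches a coloring $\alpha'$ that avoids color $\ell$.

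\textbf{Step 3: connect within $\ell-1$ colors.} Do the same for $\beta$, obtaining $\beta'$. Now $\alpha',\beta'$ are proper $(\ell-1)$-colorings with $\ell-1\ge\chi(G)+1$, so by induction they are connected in $\mathcal R_{\ell-1}(G)$, which embeds as a subgraph of $\mathcal R_\ell(G)$. Concatenating the three walks connects $\alpha$ to $\beta$.

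The only subtle point is making the induction correctly quantified. We induct on $\ell$ for the statement ``for all $H\in\mathcal G$ with $\chi(H)+1\le\ell$, $\mathcal R_\ell(H)$ is connected''. Step 2 applies this statement to the subgraph $H$ with $\ell-1$ colors; this is legitimate since $\chi(H)\le\chi(G)$ gives $\chi(H)+1\le \ell-1$, and heredity ensures $H\in\mathcal G$. We also need $\chi(H)\le \ell-2$ so that a spare color $k+1\le\ell-1$ exists for $X$. Here we should take $k=\chi(H)$ rather than $\chi(G)$, and then $\chi(H)+1\le\chi(G)+1\le \ell-1$, which holds.
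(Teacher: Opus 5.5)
Your proof is correct. You induct on $\ell$ for the uniformly quantified statement, delete the class $X=\alpha^{-1}(\ell)$, and use heredity and the $(\ell-1)$ case to move $G-X$ onto a coloring with at most $\chi(G)\le\ell-2$ colors while $X$ stays at color $\ell$. You then shift $X$ to a spare color and apply the $(\ell-1)$ case to $G$ itself. The paper does not reprove this lemma (it imports it as Lemma~2.2 of its reference EH26), so there is no proof in the text to compare against, but this is the natural argument for it.

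Your closing switch from $k=\chi(G)$ to $k=\chi(H)$ is harmless but unnecessary. Since $\gamma|_H$ already uses only colors in $\{1,\dots,\chi(G)\}$, color $\chi(G)+1\le\ell-1$ is free for $X$.
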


For recoloring $4$-chromatic $(2K_2,K_4)$-free graphs, we use the following consequence of \cref{lem:hereditary} and \cref{thm:reductions}\ref{it:3colorable}.

\begin{corollary}\label{cor:mainrecolor}
Let $G$ be a $4$-chromatic $(2K_2,K_4)$-free graph. Suppose that for every
$5$-coloring $\varphi$ of $G$, there is a $5$-coloring $\psi$ reachable from
$\varphi$ such that some color class of $\psi$ is contained in an independent
set $I$ with $\chi(G-I)\leq 3$. Then $\mathcal R_5(G)$ is connected.
\end{corollary}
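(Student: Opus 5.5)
The plan is to show that every $5$-coloring of $G$ can be connected in $\mathcal R_5(G)$ to one fixed \emph{canonical} coloring. The tool is the standard technique of recoloring $G-I$ with a restricted palette while an independent set $I$ is held at a single color. The hypothesis lets us reduce to colorings in which one color class lies inside such an $I$.

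\textbf{Step 1: set up the frozen independent set.} Let $\varphi$ be a $5$-coloring of $G$. By assumption we may move to a reachable coloring $\psi$ and a color $c$ such that the color class $\psi^{-1}(c)$ lies inside an independent set $I$ with $\chi(G-I)\le 3$. Since $I$ is independent, every vertex of $I\setminus\psi^{-1}(c)$ can be recolored to $c$ one at a time. Such a vertex has no neighbor in $I$, and every vertex outside $I$ avoids color $c$, because the whole $c$-class lies in $I$. After these moves we reach a coloring $\psi'$ in which $\psi'^{-1}(c)=I$ exactly. Consequently $G-I$ is colored with the other four colors.

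\textbf{Step 2: recolor $G-I$ inside the four remaining colors.} The graph $G-I$ is $(2K_2,K_4)$-free, since the class is hereditary, and it is $3$-colorable. By \cref{thm:reductions}\ref{it:3colorable} it is recolorable, so $\mathcal R_4(G-I)$ is connected. This uses $4\ge\chi(G-I)+1$ when $\chi(G-I)=3$; when $\chi(G-I)\le 2$ we instead use \cref{lem:hereditary} or directly \cref{thm:reductions}\ref{it:3colorable} with $\ell=4$. Any recoloring sequence of $G-I$ that uses only the four colors other than $c$ lifts to $G$ with $I$ frozen at $c$. Hence we can move $G-I$ to any prescribed $4$-coloring, in particular to one using only three colors $\{a_1,a_2,a_3\}$. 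Call the result $\psi''$.

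\textbf{Step 3: reach the canonical coloring.} Fix once and for all a proper $4$-coloring $\alpha$ of $G$, which exists since $\chi(G)=4$, using colors $\{1,2,3,4\}$. In $\psi''$ only the four colors $c,a_1,a_2$ and $a_3$ appear, so some fifth color $d$ is entirely unused.

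We then move from $\psi''$ to $\alpha$ class by class using the free color as a buffer. A convenient route is the following. First send $I$, which is independent, vertex by vertex to $d$; this is legal because $d$ is unused. Now $c$ is free. Next use the 4-color recolorability of $G-I$ again, with palette all colors but $d$, to rearrange $G-I$.

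Rather than fully engineering this, the cleaner argument is via the standard fact that if every coloring reaches some coloring using at most $\chi+0$ colors arranged as a fixed $4$-coloring, then $\mathcal R_5$ is connected. Any two $4$-colorings of $G$, each viewed inside $5$ colors, are connected by the classical Kempe-free argument. Given $4$-colorings $\beta$ and $\alpha$, we recolor vertices of $\alpha$-class $i$ to a spare color one class at a time. Each $\alpha$-class is independent, and a spare color exists because at each stage at most four colors are in use.

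\textbf{Main obstacle.} The delicate point is this last step: ensuring that a spare color is always available while converting $\psi''$ to the fixed target $\alpha$. I would handle it by the known lemma (Cereceda-type) that any two colorings each using at most $\chi(G)$ colors... but with $\ell=\chi+1$ this can fail in general. So instead I would pick the target canonical coloring to be of the same form: one class equal to some $I_0$ with $\chi(G-I_0)\le3$. The comparison then reduces to Step 2 applied twice, moving $I$ and $I_0$ through a free color. Verifying that the two independent sets can be swapped, by first emptying colors via 3-coloring $G-I$ and then relocating, is where care is required.
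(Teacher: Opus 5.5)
\textbf{There is a genuine gap in your Step 3.} Your Steps 1 and 2 are correct and match the first half of the paper's proof: every $5$-coloring reaches a $4$-coloring. The gap is the second half, showing that all $4$-colorings lie in one component of $\mathcal R_5(G)$.

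Your ``classical'' argument, moving the $\alpha$-classes one at a time to a spare color, fails after the first move. Once $A_1$ is recolored to the spare color, the vertices outside $A_1$ can still carry all four original colors. Then five colors are in use and no spare remains; as you concede, this genuinely fails for $\ell=\chi+1$ in general.

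Your alternative also does not reach a fixed target $\alpha$. That alternative freezes the \emph{source} set $I$ at the free color $d$ and rearranges $G-I$, but $\alpha$ need not be constant on $I$. The ``swap $I$ and $I_0$'' step is left unverified, so as written the proof is incomplete.

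The missing idea is to freeze a color class of the \emph{target} at the free color of the \emph{source}. Let $\varphi_1,\varphi_2$ be $4$-colorings and let $\beta$ be a color absent from $\varphi_1$. Take $I$ to be the $\beta$-class of $\varphi_2$, or any class of $\varphi_2$ if $\beta$ is unused there. The argument then runs as follows.
\begin{enumerate}
\item Starting from $\varphi_1$, recolor $I$ to $\beta$ vertex by vertex. This is legal because $I$ is independent and $\beta$ is absent.
\item The restriction of $\varphi_2$ to $G-I$ is proper, uses at most three colors, and none of them is $\beta$. Hence $\chi(G-I)\le 3$, and \cref{thm:reductions}\ref{it:3colorable} gives that $\mathcal R_4(G-I)$ is connected on the four colors other than $\beta$.
\item The restriction of $\varphi_1$ to $G-I$ also avoids $\beta$. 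So you can move $G-I$ from $\varphi_1$ to $\varphi_2$ while $I$ stays frozen at $\beta$.
\item Finally, recolor $I$ to its $\varphi_2$-color. This is legal because $I$ is a whole color class of $\varphi_2$ and $G-I$ now agrees with $\varphi_2$.
\end{enumerate}
The condition $\chi(G-I)\le 3$ comes for free because $I$ is a class of a $4$-coloring. This is the concrete form your canonical-$I_0$ idea needs: $I_0$ must be chosen from the target, according to which color is currently free.
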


\begin{proof}
Let $\varphi$ be a $5$-coloring. By hypothesis, $\varphi$ can be recolored to
a $5$-coloring $\psi$ having a color class $S$ and an independent set $I$ containing $S$ with $\chi(G-I)\le 3$. Recolor the vertices of $I\setminus S$ one at a time with the color of $S$. Since $I$ is independent and every vertex with the color of $S$ lies in $I$, every recoloring is valid, and we obtain a $5$-coloring in which $I$ is a color class. Since $G-I$ is $3$-colorable and $(2K_2,K_4)$-free, Theorem 2~\ref{it:3colorable} applied with the four remaining colors, allows us to recolor $G-I$ using only three colors. Thus $\varphi$ can be recolored to a $4$-coloring.

It remains to show that all $4$-colorings lie in the same component of $R_5(G)$. Let $\varphi_1$ and $\varphi_2$ be two $4$-colorings of $G$, and let $\beta$ be a color not used by $\varphi_1$. Note, that color is the fifth color of the 5-coloring. If $\varphi_2$ uses $\beta$,
let $I$ be the color class of $\varphi_2$ with color $\beta$; otherwise, let
$I$ be any color class of $\varphi_2$. Starting from $\varphi_1$, recolor the
vertices of $I$ one at a time with $\beta$. Now, we know that the graph $G-I$ is $3$-colorable, and $\varphi_2$ restricted to $G-I$ is a 3-coloring that does not use $\beta$, so by Theorem 2~\ref{it:3colorable}, applied with the four colors other than $\beta$, the current coloring of $G-I$ which is $\varphi_1$ restricted to $G-I$ can be recolored to $\varphi_2$ restricted to $G-I$. If $I$ is not already colored as in \(\varphi_2\), recolor its vertices one at a time to their color in \(\varphi_2\); these recolorings are valid because \(I\) is independent and \(G-I\) now agrees with \(\varphi_2\). Hence all $4$-colorings lie in one component of $\mathcal R_5(G)$. Since every $5$-coloring can be recolored to a $4$-coloring, it follows that
$\mathcal R_5(G)$ is connected.
\end{proof}

The following notation will be used throughout the entirety of the paper. Let $G$ be a $(2K_2,K_4)$-free graph containing an induced $C_5$ and denote the set of vertices of a 5-cycle cycle $C= c_1c_2c_3c_4c_5$. We now define the following sets, with relation to $C$, where indices are taken modulo $5$.
\begin{align*}
    U &= \{v\in V(G)\setminus V(C): N(v) \cap V(C) = V(C)\},\\
    F_i &= \{v\in V(G)\setminus V(C): N(v) \cap V(C) = V(C)\setminus \{c_i\}\},\\
    Y_i &= \{v\in V(G)\setminus V(C): N(v)\cap V(C) = \{c_i,c_{i+2},c_{i-2}\}\},\\
    R_i &= \{v\in V(G)\setminus V(C): N(v)\cap V(C) = \{c_{i-1},c_{i+1}\}\},\\
    Z &= \{v\in V(G)\setminus V(C): N(v)\cap V(C) = \varnothing\}.
\end{align*}
Most of the following structural properties were first proved by Gaspers and Huang~\cite{GH19}. Since they are simple and used repeatedly throughout the arguments, the properties needed here are also collected and proved in~\cite{EH26}.

\begin{lemma}[\cite{EH26,GH19}]\label{lem:basic}
With the notation above, let $G$ be a $(2K_2,K_4)$-free graph with an induced $C_5$. Then the following hold for all $i\in [5]$ where indices are taken modulo $5$.
\begin{enumerate}[label=\textup{(\roman*)},leftmargin=2.6em]
\item\label{it:indep} $U$, $Z$, and each of $F_i$, $Y_i$, and $R_i$ are independent.
\item\label{it:UF} $U$ is anticomplete to $F_i\cup Y_i$.
\item\label{it:FY} $F_i$ is complete to $Y_{i+2}\cup Y_{i-2}$ and anticomplete to $Y_{i-1}\cup Y_i\cup Y_{i+1}$.
\item\label{it:FF} At least one of $F_i$ and $F_{i+2}$ is empty, and $F_i$ is anticomplete to $F_j$ for $j\neq i$.
\item\label{it:YY} $Y_i$ is complete to $Y_{i+1}$.
\item \label{it:YY+2} Each vertex of $Y_i$ is anticomplete to at least one of $Y_{i-2}$, $Y_{i+2}$.
\item\label{it:RR} $R_i$ is complete to $R_{i+1}\cup R_{i-1}$, and $Y_i$ is complete to $R_i$.
\item\label{it:FR} $F_i$ is complete to $R_{i-1}\cup R_{i+1}$.
\item\label{it:ZR} $Z$ is anticomplete to $R_i$.
\item \label{itH4F} If $G$ is $H_4$-free, then for all $i\in [5]$ the set $F_i$ is empty.
\item \label{itH4RY} If $G$ is $H_4$-free, then $R_i$ is anticomplete to $Y_{i-1}\cup Y_{i+1}$
\end{enumerate}
\end{lemma}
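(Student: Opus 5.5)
The plan is to verify each item directly: for every claimed adjacency or non-adjacency, assume it fails and exhibit an induced $2K_2$, an induced $K_4$, or (for the last two items) an induced $H_4$. We use only $N(v)\cap V(C)$ for the vertices involved and the fact that $c_jc_k$ is an edge exactly when $k=j\pm1$. Throughout, indices are taken modulo $5$, and we write $c_{i-1}=c_{i+4}$ and $c_{i-2}=c_{i+3}$. Two observations do most of the work.
\begin{itemize}
\item If two adjacent vertices have two adjacent common neighbours on $C$, the four vertices form a $K_4$.
\item If two vertices $x,x'$ are non-adjacent, with $x\sim c_j$ and $x'\sim c_k$, where $c_j\not\sim c_k$, $x\not\sim c_k$ and $x'\not\sim c_j$, then $\{x,c_j,x',c_k\}$ induces a $2K_2$.
\end{itemize}

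For \ref{it:indep}, the $K_4$ observation handles $U$, $F_i$ and $Y_i$: two adjacent vertices of $U$ share $c_1c_2$, two of $F_i$ share $c_{i+1}c_{i+2}$, and two of $Y_i$ share $c_{i+2}c_{i+3}$. For $R_i$ and $Z$, an edge inside the set together with the edge $c_{i+2}c_{i+3}$ (respectively $c_1c_2$) forms a $2K_2$. Item \ref{it:ZR} is the same argument: an edge $zr$ with $r\in R_i$, together with $c_{i+2}c_{i+3}$, is a $2K_2$.

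Item \ref{it:UF} and the anticomplete halves of \ref{it:FY} and \ref{it:FF} all come from the $K_4$ observation. For instance, $f\in F_i$ and $y\in Y_{i+1}$ share the edge $c_{i+3}c_{i+4}$. Similarly, two vertices of $F_i\cup F_j$ each miss only one vertex of $C$, so they share at least three vertices of $C$, which must contain an edge.

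The complete statements come from the $2K_2$ observation, using the following choices of pairs.
\begin{itemize}
\item $F_i$ to $Y_{i+2}$: the edges $fc_{i+3}$ and $yc_i$.
\item $Y_i$ to $Y_{i+1}$: the edges $yc_{i+2}$ and $y'c_{i+4}$.
\item $R_i$ to $R_{i+1}$: the edges $rc_{i+4}$ and $sc_{i+2}$.
\item $Y_i$ to $R_i$: the edges $yc_{i+2}$ and $rc_{i+4}$.
\item $F_i$ to $R_{i+1}$: the edges $fc_{i+3}$ and $rc_i$.
\end{itemize}
The cases $Y_{i-2}$ and $R_{i-1}$ follow by symmetry. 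For the first half of \ref{it:FF}, suppose $f\in F_i$ and $g\in F_{i+2}$. If $f\sim g$, then $f,g,c_{i+3},c_{i+4}$ form a $K_4$. Otherwise the edges $fc_{i+2}$ and $gc_i$ form a $2K_2$.

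For \ref{it:YY+2}, suppose $y\in Y_i$ has neighbours $a\in Y_{i-2}$ and $b\in Y_{i+2}$. Since $Y_{i+2}$ and $Y_{i+3}=Y_{i-2}$ are consecutive, \ref{it:YY} gives $a\sim b$. All three of $y,a,b$ are adjacent to $c_i$, so $\{y,a,b,c_i\}$ is a $K_4$.

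The two $H_4$ items need a different cycle. For \itemref{itH4F}, a vertex $f\in F_i$ together with $C$ induces exactly $W_5$ minus the spoke $fc_i$, which is $H_4$. For \itemref{itH4RY}, take $r\in R_i$ adjacent to $y\in Y_{i+1}$. Then $C'=r\,c_{i+1}c_{i+2}c_{i+3}c_{i+4}$ is an induced $5$-cycle, since $r$ sees only $c_{i+1}$ and $c_{i+4}$ on $C$. The vertex $y$ is adjacent to $r$, $c_{i+1}$, $c_{i+3}$ and $c_{i+4}$, but not to $c_{i+2}$. So $y$ sees all of $C'$ except one vertex, and $C'+y$ is an induced $H_4$. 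The case $Y_{i-1}$ is symmetric.

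There is no real obstacle here; the work is bookkeeping. The step most prone to error is choosing the right edge pairs for the $2K_2$ arguments. I would double-check each pair against the non-adjacencies on $C$, in particular that $c_jc_{j+2}$ is a non-edge.
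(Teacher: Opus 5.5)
Your proposal is correct. Each $K_4$ and $2K_2$ witness checks out against the neighbourhoods on $C$: every chosen pair $c_j,c_k$ is a non-edge, and the reflection $c_{i+k}\leftrightarrow c_{i-k}$ fixes $F_i$ and $R_i$ while sending $Y_{i+2},R_{i+1}$ to $Y_{i-2},R_{i-1}$, so your symmetry reductions are valid. The swapped cycle $r\,c_{i+1}c_{i+2}c_{i+3}c_{i+4}$ in item (xi) is induced, and $y$ sees exactly four of its vertices.

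The paper does not prove this lemma itself; it defers to its earlier work and to Gaspers--Huang. Your direct case check is the natural argument. The cycle-replacement step you use for (xi) is the same device the paper uses later, when it replaces $c_i$ by $r$ in the proof of Claim~\ref{claim:min_rim}.
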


\section{Proof of \cref{lem:H4}}\label{sec:H4}

We now prove \cref{lem:H4}.

\begin{proof}

Suppose for a contradiction that the lemma is false, and let $G$ be a vertex-minimal nonrecolorable $(2K_2,K_4,H_4)$-free graph. Since every proper induced subgraph of $G$ is recolorable, \cref{lem:reduced} implies that $G$ is reduced. Moreover, since $\chi(G)\leq 4$ by Gaspers and Huang~\cite{GH19}, and every $3$-colorable $(2K_2,K_4)$-free graph is recolorable by \cref{thm:reductions}\ref{it:3colorable}, we have $\chi(G)=4$.

We claim that it suffices to show that $\mathcal R_5(G)$ is connected. Indeed, let $\mathcal G$ be the hereditary class consisting of all induced subgraphs of $G$. Every proper induced subgraph $H$ of $G$ is recolorable by the minimality of $G$, and hence $\mathcal R_{\chi(H)+1}(H)$ is connected. Thus, if $\mathcal R_5(G)$ is connected, then $\mathcal R_{\chi(H)+1}(H)$ is connected for every $H\in\mathcal G$. By \cref{lem:hereditary}, every graph in $\mathcal G$, and in particular $G$, is recolorable, a contradiction.

By \cref{thm:reductions}\ref{it:C5recolor} and \cref{thm:reductions}\ref{it:H4wheel}, we may therefore assume that $G$ contains an induced $C_5$ and is $W_5$-free. Let $C=c_1c_2c_3c_4c_5c_1$ be an induced $5$-cycle. Since $G$ is $(H_4,W_5)$-free, we have $F_i=\varnothing$ for all $i\in[5]$ and
$U=\varnothing$.

\begin{claim}\label{claim:triangled}
Every triangle in $G$ is dominating.
\end{claim}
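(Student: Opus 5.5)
Let $T=\{a,b,c\}$ be a triangle, and suppose for contradiction that some vertex $x$ has no neighbour in $T$. We use that $G$ is reduced, $2K_2$-free, $K_4$-free, contains the induced cycle $C$, and has $F_i=U=\varnothing$. The plan is to locate $T$ relative to $C$ and to treat separately the cases where $T$ meets $V(C)$ and where it does not.

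\textbf{Triangles with a vertex on $C$.} These are handled purely by the decomposition. The neighbours of a cycle vertex $c_i$ lie in $Y_i$, $Y_{i\pm 2}$, $R_{i\pm1}$, $c_{i\pm1}$ (there are no $F$ or $U$ vertices). We list the possible triangle types using \cref{lem:basic}: independence of each class and the complete/anticomplete relations. Examples are $c_i\,c_{i+1}\,y$ with $y\in Y_{i-2}\cup Y_{i+1}$, and $c_i\,y\,y'$ with $y\in Y_i$, $y'\in Y_{i+2}$. For each type we then check domination. Every cycle vertex is adjacent to some vertex of $T$ in these configurations. Any other vertex $x$ that misses $T$ has a restricted neighbourhood on $C$, which would give an induced $2K_2$ with a suitable edge of $T$ or of $C$. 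Vertices of $Z$ are ruled out because an edge of $T$ together with an edge from $x$ to the rest of $G$ must be avoided. The basic tool is the following: if $x$ misses $T$ and has a neighbour $w$, then every edge of $T$ must touch $N(w)$. Otherwise $\{x,w\}$ together with that edge induces a $2K_2$.

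\textbf{Triangles inside $G-V(C)$.} Here $T\subseteq Y\cup R\cup Z$, where $Y=\bigcup_i Y_i$ and $R=\bigcup_i R_i$. Since $Z$ is independent and anticomplete to $R$, $T$ contains at most one $Z$-vertex. Each edge of $T$ must have an endpoint adjacent to every edge of $C$ that is anticomplete to that edge; otherwise we get a $2K_2$. Combined with \cref{lem:basic}\ref{itH4RY} (for example, $R_i$ is anticomplete to $Y_{i\pm1}$), this pins down the index pattern of $T$. The likely patterns are $Y_iY_{i+1}$ plus a third vertex, $R_iR_{i+1}$ plus a third vertex, or $Y_i R_i$ plus a third vertex. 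Checking domination of $V(C)$ is then direct. For an outside vertex $x$ that misses $T$, we take a neighbour of $x$ on $C$ (or, when $x\in Z$, a neighbour of $x$ anywhere). Applying the $2K_2$ tool with the edges of $T$ forces $N(x)$ to be incompatible with the classes, or forces a $K_4$ or a $W_5$.

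If some configuration survives these checks, the fallback is reducedness. A vertex missing $T$ with neighbourhood constrained as above often satisfies $N(x)\subseteq N(v)$ for some vertex $v$ of $T$ or of $C$, which contradicts that $G$ is reduced.

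\textbf{Main obstacle.} The main obstacle is the case where $T$ lies entirely outside $C$ and uses a $Z$-vertex, or where $x\in Z$. The $C_5$ structure says nothing directly there, so the argument must rely on chaining $2K_2$-freeness through intermediate neighbours, and possibly on the $W_5$-freeness. I would attack this case first by fixing an edge of $T$ and the cycle edge antipodal to it.
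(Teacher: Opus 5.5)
Your proposal has a genuine gap: it is a plan for a case analysis rather than a proof. The enumeration of triangle types is only sketched (``likely patterns'', ``often satisfies $N(x)\subseteq N(v)$''). The case you yourself call the main obstacle, namely $T\subseteq V(G)\setminus V(C)$ with a $Z$-vertex, or $x\in Z$, is left open. This is not a minor omission. The decomposition relative to the fixed cycle $C$ gives essentially no control there, and in the paper this very claim is what is later used to prove $Z=\varnothing$, so you cannot lean on structure of $Z$. Moreover, the statement fails without $H_4$-freeness. Take a triangle $abc$, a vertex $x$, and pairwise nonadjacent vertices $z_a,z_b,z_c$ with $N(z_t)=\{x\}\cup(\{a,b,c\}\setminus\{t\})$. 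This $7$-vertex graph is reduced and $(2K_2,K_4)$-free, yet $x$ misses $abc$. So $H_4$-freeness must be applied to a $5$-cycle passing through $x$ and $T$. Your proposal uses it only through $F_i=\varnothing$ and \cref{lem:basic}\ref{itH4RY} for the single cycle $C$, which says nothing about triangles and vertices far from $C$.

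The missing idea is a short local argument that ignores $C$ entirely, and your ``basic tool'' is its first step. Suppose $x$ misses $T=\{a,b,c\}$. Every $w\in N(x)$ is adjacent to at least two of $a,b,c$, since otherwise $xw$ and an edge of $T$ induce a $2K_2$. It is adjacent to at most two, since otherwise we get a $K_4$. Hence $N(x)$ is partitioned into sets $N_a,N_b,N_c$ according to which vertex of $T$ is missed. Now suppose $z_a\in N_a$ and $z_b\in N_b$ are nonadjacent. Then $a\,b\,z_a\,x\,z_b\,a$ is an induced $C_5$, and $c$ is adjacent to exactly four of its vertices, giving an induced $H_4$. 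So the three parts are pairwise complete. If all three parts were nonempty, one vertex from each together with $x$ would form a $K_4$. Hence one part, say $N_a$, is empty. Then $N(x)\subseteq N(a)$ with $xa\notin E(G)$, contradicting reducedness.
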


\begin{proofc}
Suppose that $abc$ is a triangle in $G$ and $v$ is anticomplete to $\{a,b,c\}$. Let $N_a(v)$, $N_b(v)$, and $N_c(v)$ denote the sets of neighbors of $v$ which are nonadjacent to $a$, $b$, and $c$, respectively.  We first claim that these three sets partition $N(v)$. If $z\in N(v)$ belongs to none of them, then $z$ is adjacent to each of $a,b,c$, so $\{z,a,b,c\}$ induces a $K_4$. On the other hand, by symmetry, if $z\in N_a(v)\cap N_b(v)$, then the edges $vz$ and $ab$ induce a $2K_2$. Next, $N_a(v)$, $N_b(v)$, and $N_c(v)$ are pairwise complete. Otherwise, by symmetry, there exist nonadjacent vertices $z_a\in N_a(v)$ and $z_b\in N_b(v)$. Then $abz_avz_ba$ is an induced $C_5$, and $c$ is adjacent to four vertices of this cycle, producing an induced $H_4$. At least one of $N_a(v)$, $N_b(v)$, and $N_c(v)$ must be empty. Indeed, if all three are nonempty, then choosing one vertex from each set gives three pairwise adjacent neighbors of $v$, and together with $v$ they induce a $K_4$. On the other hand, if, by symmetry, $N_a(v)=\varnothing$, then every neighbor of $v$ is adjacent to $a$, so $N(v)\subseteq N(a)$. Since $va\notin E(G)$, this contradicts reducedness.
\end{proofc}

\textbf{Splitting the sets $R_i$.}
We now further partition each $R_i$. Recall from \cref{lem:basic}\ref{itH4RY} that $R_i$ is anticomplete to $Y_{i-1}\cup Y_{i+1}$.

First, every vertex of $R_i$ has a neighbor in exactly one of $R_{i-2}$ and $R_{i+2}$. Indeed, suppose that $r\in R_i$ is anticomplete to $R_{i-2}\cup R_{i+2}$. Using \cref{lem:basic} and the fact that $U=F_j=\varnothing$ for every $j$, we have
\[N(r)\subseteq \{c_{i-1},c_{i+1}\}\cup Y_i\cup Y_{i-2}\cup Y_{i+2}\cup R_{i-1}\cup R_{i+1}\subseteq N(c_i),\]
contradicting reducedness.

Now suppose that $r$ has neighbors $r_{i+2}\in R_{i+2}$ and $r_{i-2}\in R_{i-2}$. By \cref{lem:basic}\ref{it:RR}, the vertices $r_{i+2}$ and $r_{i-2}$ are adjacent, so $r,r_{i+2},r_{i-2}$ induce a triangle. However, $c_i$ is nonadjacent to all three vertices, contradicting \cref{claim:triangled}.

For $j\in\{i-2,i+2\}$, let $R_i^j$ denote the vertices of $R_i$ having a neighbor in $R_j$. We have
\begin{equation}\label{eq:2}
R_i=R_i^{i-2}\mathbin{\dot\cup}R_i^{i+2},\qquad R_i^{i-2}\text{ is complete to }Y_{i+2},\qquad R_i^{i+2}\text{ is complete to }Y_{i-2}.
\end{equation}

The first assertion follows from the previous paragraph. The other two are symmetric, so it is enough to prove that $R_i^{i-2}$ is complete to $Y_{i+2}$. Suppose otherwise that $r\in R_i^{i-2}$ is nonadjacent to some $y\in Y_{i+2}$. Choose $r_{i-2}\in N(r)\cap R_{i-2}$. By \cref{lem:basic}\ref{itH4RY}, $r_{i-2}$ is nonadjacent to $y$, while both $r$ and $r_{i-2}$ are nonadjacent to $c_i$. Since $yc_i\in E(G)$, the edges $rr_{i-2}$ and $yc_i$ induce a $2K_2$, a contradiction.

\begin{claim}
    The set $Z$ is empty.
\end{claim}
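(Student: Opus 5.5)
The plan is to argue by contradiction. Suppose $z\in Z$. By \cref{lem:basic}\ref{it:indep} and \ref{it:ZR}, and because $U=F_i=\varnothing$, the vertex $z$ has no neighbours in $V(C)$, in $Z$, or in any $R_i$. Hence $N(z)\subseteq Y_1\cup\dots\cup Y_5$. Moreover $N(z)\neq\varnothing$: an isolated vertex has neighbourhood contained in that of any other vertex, which contradicts reducedness. The tool I will use throughout is \cref{claim:triangled}: the vertex $z$ must have a neighbour in every triangle of $G$. The proof should therefore consist of exhibiting triangles that avoid $N(z)$.

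\emph{Step 1: every $R_i$ is empty.} Suppose some $r\in R_i$ exists. By \eqref{eq:2}, $r$ lies in $R_i^{j}$ for some $j\in\{i-2,i+2\}$, so $r$ has a neighbour $r'\in R_j$. The vertices $r$ and $r'$ have a common neighbour $c_k\in V(C)$: namely $c_{i+1}$ when $j=i+2$, and $c_{i-1}$ when $j=i-2$. So $rr'c_k$ is a triangle. Its vertices lie in $R_i\cup R_j\cup V(C)$, and $z$ is anticomplete to all of these. This contradicts \cref{claim:triangled}, so the existence of $z$ forces $R_i=\varnothing$ for every $i$.

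\emph{Step 2: $z$ is complete to $Y=\bigcup_i Y_i$.} Every $y\in Y_i$ lies in the triangle $y\,c_{i+2}\,c_{i-2}$. Since $z$ has no neighbour on $C$, \cref{claim:triangled} forces $zy\in E(G)$. Consequently $V(G)=V(C)\cup Y\cup Z$ with $Z$ complete to $Y$. In particular all vertices of $Z$ have the same neighbourhood $Y$, so reducedness gives $|Z|=1$.

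\emph{Step 3: a contradiction in the remaining configuration $V(G)=V(C)\cup Y\cup\{z\}$ with $N(z)=Y$.} This is the step I expect to be the main obstacle. My first attempt is to use \cref{lem:basic}\ref{it:YY} and \ref{it:YY+2} together with the exclusion of $K_4$ and $H_4$ to restrict which $Y_i$ can be nonempty. For example, suppose $Y_i$ and $Y_{i+1}$ are both nonempty, with $y\in Y_i$ and $y'\in Y_{i+1}$. Then $z$, $y$, $y'$ form a triangle, and the vertex $c_{i+3}$ is adjacent to both $y$ and $y'$. Analysing $\{z,y,y',c_{i+3}\}$, together with the cycles $z\,y\,c_{i+2}\,c_{i+1}\,y'$, should give either an induced $H_4$/$W_5$ or a comparable pair. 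Once the nonempty classes are pinned down, I will look for a vertex $w$ with $N(z)\subseteq N(w)$, which contradicts reducedness. Natural candidates are a cycle vertex $c_k$ adjacent to all remaining $Y$-classes, or a vertex of $Y$ whose neighbourhood contains $Y$ minus its own class. If this fails, the fallback is to show directly that $G$ is then $3$-colourable (colour $C\cup Y$ with three colours so that $Y$ misses one colour, and give $z$ that colour). This contradicts $\chi(G)=4$ and completes the proof that $Z=\varnothing$.
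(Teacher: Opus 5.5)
Your Steps~1 and~2 are correct and match the paper's first two steps, in the opposite order. The triangles $r\,r'\,c_k$ and $y\,c_{i+2}\,c_{i-2}$, together with \cref{claim:triangled}, force every $R_i$ to be empty and force $z$ to be complete to $Y$. The gap is Step~3, which carries the real content of the claim: it is not carried out, and none of the leads you propose closes it.
\begin{itemize}
\item The $5$-cycle $z\,y\,c_{i+2}\,c_{i+1}\,y'$ is not induced, since $yy'\in E(G)$ by \cref{lem:basic}\ref{it:YY}.
\item No vertex of $Y$ can satisfy $N(z)\subseteq N(w)$, because every vertex of $Y$ is adjacent to $z$.
\item A cycle vertex $c_k$ satisfies $N(z)\subseteq N(c_k)$ only when $Y_{k-1}=Y_{k+1}=\varnothing$. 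Reducedness therefore only tells you that the indices of the nonempty $Y$-classes cover every pair $\{k-1,k+1\}$. This still allows, for example, only $Y_1,Y_2,Y_3$ to be nonempty, so it does not pin the configuration down.
\item The $3$-colouring fallback cannot work in general. If all five $Y_i$ are nonempty and $Y_i$ is anticomplete to $Y_{i+2}$, then $G$ contains an induced $W_5$, which is $4$-chromatic. That configuration must be excluded by $W_5$-freeness, not by a colouring.
\end{itemize}

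The missing idea is to apply reducedness to the nonadjacent pair $c_{i-1}$ and $y\in Y_i$, rather than to $z$. Once $U$, every $F_j$ and every $R_j$ are empty, and since $z$ has no neighbours on $C$, we get $N(c_{i-1})=\{c_{i-2},c_i\}\cup Y_{i-1}\cup Y_{i+1}\cup Y_{i+2}$. The vertex $y$ is adjacent to $c_{i-2}$ and $c_i$ and is complete to $Y_{i-1}\cup Y_{i+1}$. So if $y$ were complete to $Y_{i+2}$, we would have $c_{i-1}\preceq y$. Hence $y$ has a non-neighbour in $Y_{i+2}$, and in particular $Y_i\neq\varnothing$ implies $Y_{i+2}\neq\varnothing$. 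Some $Y_i$ is nonempty because $N(z)\neq\varnothing$, and iterating $i\mapsto i+2$ then makes all five classes nonempty. Next, an edge between $Y_i$ and $Y_{i+2}$ would give a triangle $y_iy_{i+1}y_{i+2}$, using \cref{lem:basic}\ref{it:YY}, and together with $z$ this is a $K_4$. So $Y_i$ is anticomplete to $Y_{i+2}$. Choosing one vertex $y_i$ from each class then gives an induced $5$-cycle $y_1y_2y_3y_4y_5$ that is complete to $z$, i.e.\ an induced $W_5$, contradicting $W_5$-freeness.
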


\begin{proofc}
Suppose for contradiction that $z\in Z$. Then $z$ is complete to $Y_i$ for all $i\in [5]$. Otherwise, if $y\in Y_i$ is nonadjacent to $z$, then $y,c_{i-2},c_{i+2}$ form a triangle anticomplete to $z$, contradicting \cref{claim:triangled}. Thus, by \cref{lem:basic}\ref{it:indep} and \ref{it:ZR}, any two vertices of $Z$ have the same neighborhood and are therefore comparable, contradicting the reducedness of $G$. Hence we may assume that $Z=\{z\}$.

Suppose that there is an edge between $R_i$ and $R_{i+2}$. Then such an edge together with $c_{i+1}$ forms a triangle anticomplete to $z$, again contradicting \cref{claim:triangled}. Thus $R_i$ is anticomplete to $R_{i+2}\cup R_{i-2}$ for every $i\in[5]$, and by \eqref{eq:2}, we have $R_i=\varnothing$ for all $i\in[5]$.

At least one $Y_i$ is nonempty; otherwise $z$ is an isolated vertex and hence $N(z)\subseteq N(c_1)$, contradicting reducedness. Let $y\in Y_i$. We have $N(c_{i-1})=\{c_{i-2},c_i\}\cup Y_{i-1}\cup Y_{i+1}\cup Y_{i+2}$. Moreover, $y$ is adjacent to $c_{i-2}$ and $c_i$, and by \cref{lem:basic}\ref{it:YY}, $y$ is complete to $Y_{i-1}\cup Y_{i+1}$. Therefore, if $y$ were complete to $Y_{i+2}$, then $N(c_{i-1})\subseteq N(y)$, contradicting reducedness. Hence there exists some $y'\in Y_{i+2}$ nonadjacent to $y$, and in particular $Y_{i+2}\neq\varnothing$. Repeating this argument cyclically around the cycle gives $Y_i\neq\varnothing$ for all $i\in[5]$.

We claim that $Y_i$ and $Y_{i+2}$ are anticomplete for every $i\in[5]$. Otherwise, let $y_i\in Y_i$ and $y_{i+2}\in Y_{i+2}$ be adjacent. Since $Y_{i+1}\neq\varnothing$, choose $y_{i+1}\in Y_{i+1}$. By \cref{lem:basic}\ref{it:YY}, the vertices $y_i,y_{i+1},y_{i+2}$ form a triangle. Since $z$ is adjacent to all three, these four vertices induce a $K_4$, a contradiction.

Now choose $y_i\in Y_i$ for every $i\in[5]$. By \cref{lem:basic}\ref{it:YY}, $y_i$ is adjacent to $y_{i+1}$ for every $i$, while the previous paragraph shows that $y_i$ is nonadjacent to $y_{i+2}$. Thus $y_1y_2y_3y_4y_5y_1$ is an induced $5$-cycle. Since $z$ is adjacent to every $y_i$, these six vertices induce a $W_5$, contradicting that $G$ is $W_5$-free. Therefore $Z=\varnothing$.
\end{proofc}

Fix an arbitrary $5$-coloring $\varphi$ of $G$.

\begin{claim}\label{claim:nonrainbowC5}
If some $5$-coloring reachable from $\varphi$ colors $C$ non-rainbow, then $\varphi$ can be recolored to a $5$-coloring having a color class $I$ with $\chi(G-I)\leq 3$.
\end{claim}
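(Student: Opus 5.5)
The plan is to use the non-rainbow $5$-coloring to build, by single-vertex recolorings, a $5$-coloring in which one color class $I$ contains two antipodal cycle vertices together with all of $Y_i$, and then show directly that $G-I$ is $3$-colorable. Since reachability is transitive, we may assume $\varphi$ itself colors $C$ non-rainbow. A $C_5$ uses at least three colors, and two adjacent vertices of $C$ never share a color. So some color $\alpha$ appears on two nonadjacent vertices of $C$, and after relabelling these are $c_{i-1}$ and $c_{i+1}$. Throughout we use that $U=Z=F_j=\varnothing$. Hence $V(G)=V(C)\cup\bigcup_j Y_j\cup\bigcup_j R_j$, and the adjacencies are governed by \cref{lem:basic}, \eqref{eq:2} and \cref{lem:basic}\ref{itH4RY}.

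\textbf{Step 1: move $Y_i$ into the $\alpha$-class.} We recolor the vertices of $Y_i$ to $\alpha$ one at a time. Each recoloring is valid, because no neighbor of a vertex $y\in Y_i$ can carry $\alpha$:
\begin{itemize}
\item The neighbors of $y$ lie in $\{c_i,c_{i\pm 2}\}\cup Y_{i\pm1}\cup Y_{i\pm2}\cup R_i$. Here we use that $Y_i$ is independent, that $R_{i\pm1}$ is anticomplete to $Y_i$ by \cref{lem:basic}\ref{itH4RY}, and that $Y_i$ is nonadjacent to $R_{i\pm2}$ because those vertices have different traces on $C$.
\item Every vertex of $Y_{i\pm1}\cup Y_{i\pm2}\cup R_i$ is adjacent to $c_{i-1}$ or $c_{i+1}$. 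Each $c_i,c_{i\pm2}$ is adjacent to $c_{i-1}$ or $c_{i+1}$.
\item Therefore none of these neighbors is colored $\alpha$.
\end{itemize}
Let $I$ be the resulting $\alpha$-class. Every vertex of $I$ is nonadjacent to both $c_{i-1}$ and $c_{i+1}$. A trace computation then gives
\[
\{c_{i-1},c_{i+1}\}\cup Y_i\subseteq I\subseteq \{c_{i-1},c_{i+1}\}\cup Y_i\cup R_{i-1}\cup R_{i+1}.
\]
Before fixing $I$, I would also try to recolor further vertices of $R_{i-1}\cup R_{i+1}$ into $\alpha$ where this is valid. This enlarges $I$ and simplifies the remaining graph.

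\textbf{Step 2: $3$-color $G-I$.} The graph $G-I$ consists of:
\begin{itemize}
\item the induced path $c_{i+2}c_ic_{i-2}$ (indeed $c_{i+2}c_{i-2}$ is an edge of $C$, so this is actually a triangle);
\item the sets $Y_{i\pm1}$ and $Y_{i\pm2}$;
\item the sets $R_i$ and $R_{i\pm2}$;
\item the leftovers $R'_{i\pm1}=R_{i\pm1}\setminus I$.
\end{itemize}
I would write down three explicit independent sets covering these, guided by the traces on $C$. A natural first attempt is the following, where $\{c_i,c_{i+2},c_{i-2}\}$ is a triangle and so receives three different colors:
\begin{align*}
A&=\{c_i\}\cup Y_{i+1}\cup Y_{i-1}\cup(\text{part of }R_{i\pm2}),\\
B&=\{c_{i+2}\}\cup Y_{i-2}\cup R^{\cdot}_{\cdot}\text{-pieces},\\
D&=\{c_{i-2}\}\cup Y_{i+2}\cup R_i\cup\cdots .
\end{align*}
The refinement of the $R_j$ into $R_j^{j-2}$ and $R_j^{j+2}$ from \eqref{eq:2} is used to decide which side each $R$-piece goes to. Independence of each class is checked with \cref{lem:basic}. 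For example:
\begin{itemize}
\item $Y_{i-1}$ and $Y_{i+1}$ may be adjacent. \cref{lem:basic}\ref{it:YY+2} says each vertex is anticomplete to at least one of $Y_{j\pm2}$, and I would exploit this, or else split these sets between classes.
\item $R_j^{j+2}$ is complete to $Y_{j-2}$, which forbids placing these in the same class.
\end{itemize}
If a clean assignment fails, the fallback is to show that $G-I$ is $(2K_2,K_4)$-free and contains no induced $C_5$ or no $4$-chromatic obstruction. Every triangle of $G$ is dominating by \cref{claim:triangled}, and this would be used to exclude a $4$-critical subgraph, e.g.\ a $W_5$ (already excluded) or an odd wheel inside $G-I$.

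\textbf{Main obstacle.} I expect the main obstacle to be the leftover vertices of $R_{i-1}\cup R_{i+1}$ that could not be recolored to $\alpha$. They are adjacent to several of the other pieces: to $R_i$ and $R_{i\pm2}$ through \cref{lem:basic}\ref{it:RR}, and to $Y_{i\pm1}$ through \eqref{eq:2}. Placing them consistently in a $3$-coloring therefore requires the split \eqref{eq:2} together with the fact that the $\alpha$-colored vertices blocking them force extra adjacencies. I would handle this first by using $2K_2$-freeness against the edges $c_{i-1}c_i$ and $c_{i+1}c_{i+2}$, to show that any such leftover vertex has restricted neighborhoods among $R_{i\pm2}$.
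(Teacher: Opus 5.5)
Your Step~1 is valid and matches the paper's first move. The paper takes the repeated pair to be $c_1,c_4$, i.e.\ $i=5$, and recolors $Y_5\to\alpha$. Two side remarks in it are false, though neither matters for Step~1:
\begin{itemize}
\item $Y_i$ is not anticomplete to $R_{i\pm2}$ in general: by \eqref{eq:2}, $R_{i+2}^{i-1}$ and $R_{i-2}^{i+1}$ are complete to $Y_i$. This is harmless, since those vertices see $c_{i+1}$ or $c_{i-1}$ and so cannot be colored $\alpha$.
\item $\{c_i,c_{i+2},c_{i-2}\}$ is not a triangle, because $c_i$ is nonadjacent to $c_{i\pm2}$.
\end{itemize}

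The genuine gap is Step~2, which is the whole content of the claim and is never carried out. The classes you sketch are not independent. Besides $Y_{i-1}\cup Y_{i+1}$, which you flag yourself, your third class contains both $R_i$ and $Y_{i+2}$, yet $R_i^{i-2}$ is complete to $Y_{i+2}$ by \eqref{eq:2}. The fallback does not close the gap either. $G-I$ is automatically $(2K_2,K_4)$-free, and you give no reason it should be $C_5$-free. Even $C_5$-freeness would not give $\chi\le 3$: $\overline{C_7}$ is $(2K_2,K_4,C_5)$-free and $4$-chromatic. ``No $4$-chromatic obstruction'' merely restates the goal. Finally, the further enlargement of $I$ that you mention only in passing is not a side issue; the argument depends on choosing it in a specific way.

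Here is the missing argument, in your indexing.
\begin{enumerate}
\item Recolor all of $R_{i+1}^{i-2}\cup R_{i-1}^{i+2}$ to $\alpha$. This is always legal: by the dichotomy behind \eqref{eq:2}, these vertices have no neighbor in $R_{i-1}$, respectively $R_{i+1}$, and they are anticomplete to $Y_i\cup\{c_{i\pm1}\}$.
\item Recolor as many vertices of $R_{i+1}^{i-1}\cup R_{i-1}^{i+1}$ to $\alpha$ as possible. Each leftover $p\in P\subseteq R_{i+1}^{i-1}$ then has an $\alpha$-colored neighbor $p'\in R_{i-1}^{i+1}$. Likewise each leftover $q\in Q\subseteq R_{i-1}^{i+1}$ has an $\alpha$-colored neighbor $q'\in R_{i+1}^{i-1}$.
\item Applying $2K_2$-freeness to the edges $pp'$ and $qq'$ shows that $P$ is complete to $Q$.
\item Set $B=\{b\in Y_{i-1}:N(b)\cap(P\cup Y_{i+2})\neq\varnothing\}$ and $A=N(B)\cap Y_{i+1}$. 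The three classes are
\[
\{c_i\}\cup R_i\cup B\cup(Y_{i+1}\setminus A),\qquad
\{c_{i-2}\}\cup R_{i-2}\cup Y_{i+2}\cup P\cup(Y_{i-1}\setminus B),\qquad
\{c_{i+2}\}\cup R_{i+2}\cup Y_{i-2}\cup Q\cup A.
\]
\end{enumerate}
The nontrivial point is that $A$ is anticomplete to $Q\cup Y_{i-2}$. Let $a\in A$ have a neighbor $b\in B$. Then \cref{lem:basic}\ref{it:YY+2} makes $a$ anticomplete to $Y_{i-2}$ and $b$ anticomplete to $Y_{i+2}$. Hence $b$ has a neighbor $p\in P$. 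A neighbor $q\in Q$ of $a$ would then give a $K_4$ on $\{a,b,p,q\}$, using that $P$ is complete to $Q$ and $Y_j$ is complete to $R_j$.

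None of these ingredients appears in your proposal: not the preliminary recoloring of $R_{i+1}^{i-2}\cup R_{i-1}^{i+2}$, not the completeness of $P$ to $Q$, and not the $A/B$ split of $Y_{i\pm1}$. As written, the proposal does not prove the claim.
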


\begin{proofc}
Suppose that after some sequence of recolorings, $C$ is not rainbow. We continue to denote the current coloring by $\varphi$. Up to symmetry, we can assume that $\varphi(c_1)=\varphi(c_4)=\alpha$. Then the only vertices adjacent to neither $c_1$ nor $c_4$, and hence the only vertices that can possibly be colored $\alpha$ together with $c_1$ and $c_4$, are
\[
\{c_1,c_4\}\cup Y_5\cup R_1\cup R_4.
\]
By \cref{lem:basic}\ref{it:indep}, \ref{itH4RY}, and \eqref{eq:2}, we can recolor
\[
Y_5\cup R_1^3\cup R_4^2 \to \alpha.
\]
Indeed, these sets are independent and pairwise anticomplete, and every vertex in these sets has no neighbor already colored $\alpha$. Next, recolor to $\alpha$ as many vertices as possible from $R_1^4\cup R_4^1$. Therefore, every vertex not colored $\alpha$ in $R_1^4\cup R_4^1$ is adjacent to a vertex colored $\alpha$. Let $I$ be the set of all vertices colored $\alpha$, and define
\[
P=R_1^4\setminus I \qquad \text{and} \qquad Q=R_4^1\setminus I.
\]
By maximality, every $p\in P$ has an $\alpha$-colored neighbor. Since $p\in R_1^4$, the only possible such neighbor lies in $R_4^1$; choose one and call it $p'$. Similarly, every $q\in Q$ has an $\alpha$-colored neighbor $q'\in R_1^4$.

We claim that $P$ is complete to $Q$. Otherwise, let $p\in P$ and $q\in Q$ be nonadjacent, and choose $\alpha$-colored vertices $p'\in R_4^1$ and $q'\in R_1^4$ with $pp',qq'\in E(G)$. Since $R_1$ and $R_4$ are independent, we have $pq',p'q\notin E(G)$, while $p'q'\notin E(G)$ because $p'$ and $q'$ are both colored $\alpha$. Thus the edges $pp'$ and $qq'$ induce a $2K_2$, a contradiction. Hence $P$ is complete to $Q$.

We now define the following three independent sets. We delay the verification, since this follows directly from the definitions and \cref{lem:basic}, and we will soon add to these sets and verify that they remain independent.
\begin{align*}
    J_1^0 = \{c_3\}\cup P \cup R_3 \cup Y_2 \qquad  J_2^0 = \{c_2\}\cup Q\cup R_2\cup Y_3 \qquad J_3^0 = \{c_5\} \cup R_5.
\end{align*}

Note that the $J_i^0$ independent sets cover all of $G\setminus (I\cup Y_1 \cup Y_4)$. Before applying \cref{cor:mainrecolor}, we must cleverly partition $Y_1$ and $Y_4$ in order to add them to the $J_i^0$ independent sets. To this end, let
\begin{align*}
    B = \{b\in Y_4: N(b)\cap (P\cup Y_2) \neq \varnothing\} \qquad \text{and} \qquad A = N(B) \cap Y_1.
\end{align*}
We now show that $A$ is anticomplete to $Q\cup Y_3$. Let $a\in A$ and choose $b\in B\cap N(a)$, which exists by definition of $A$. Since $ab\in E(G)$, \cref{lem:basic}\ref{it:YY+2} implies that $a$ is anticomplete to $Y_3$ and $b$ is anticomplete to $Y_2$. Thus, since $b\in B$, it must have a neighbor $p\in P$. Suppose that $a$ has a neighbor $q\in Q$. Then $\{a,b,p,q\}$ induces a $K_4$: the edges $ab,aq,bp$ hold by construction, $pq\in E(G)$ since $P$ is complete to $Q$, and $ap,bq\in E(G)$ by \cref{lem:basic}\ref{it:RR}. This is a contradiction.

Now, we can add $Y_1\cup Y_4$ to the $J^0$ independent sets as follows.
\begin{align*}
    J_1 = J_1^0 \cup (Y_4\setminus B)  \qquad  J_2 = J_2^0\cup A  \qquad J_3 = J_3^0 \cup B \cup (Y_1\setminus A).
\end{align*}
We claim that these sets are independent.

\textbf{The set $\mathbf{J_1=\{c_3\}\cup P\cup R_3\cup Y_2\cup(Y_4\setminus B)}$.}
We verify that $J_1$ is independent. By the definitions of the five-cycle types, $c_3$ is nonadjacent to every vertex of $P\cup R_3\cup Y_2\cup Y_4$. Moreover, $P\subseteq R_1^4$ is anticomplete to $R_3$ by \eqref{eq:2}, while $P$ is anticomplete to $Y_2$, and $R_3$ is anticomplete to $Y_2\cup Y_4$, by \cref{lem:basic}\ref{itH4RY}. Finally, by the definition of $B$, every vertex of $Y_4\setminus B$ is anticomplete to $P\cup Y_2$. Since each of $R_1,R_3,Y_2,Y_4$ is independent by \cref{lem:basic}\ref{it:indep}, it follows that $J_1$ is independent.

\textbf{The set $\mathbf{J_2=\{c_2\}\cup Q\cup R_2\cup Y_3\cup A}$.}
We verify that $J_2$ is independent. By the definitions of the five-cycle types, $c_2$ is nonadjacent to every vertex of $Q\cup R_2\cup Y_3\cup A$. Moreover, $Q\subseteq R_4^1$ is anticomplete to $R_2$ by \eqref{eq:2}, while $Q$ is anticomplete to $Y_3$, and $R_2$ is anticomplete to $Y_1\cup Y_3$, by \cref{lem:basic}\ref{itH4RY}. Since $A\subseteq Y_1$, this also shows that $R_2$ is anticomplete to $A$. Finally, as shown above, $A$ is anticomplete to $Q\cup Y_3$. Since each of $R_4,R_2,Y_3,Y_1$ is independent by \cref{lem:basic}\ref{it:indep}, it follows that $J_2$ is independent.

\textbf{The set $\mathbf{J_3=\{c_5\}\cup R_5\cup B\cup(Y_1\setminus A)}$.}
We verify that $J_3$ is independent. By the definitions of the five-cycle types, $c_5$ is nonadjacent to every vertex of $R_5\cup B\cup(Y_1\setminus A)$. Moreover, $R_5$ is anticomplete to $Y_4\cup Y_1$ by \cref{lem:basic}\ref{itH4RY}, and hence to $B\cup(Y_1\setminus A)$. Finally, since $A=N(B)\cap Y_1$, every vertex of $Y_1\setminus A$ is nonadjacent to every vertex of $B$. Since each of $R_5,Y_4,Y_1$ is independent by \cref{lem:basic}\ref{it:indep}, it follows that $J_3$ is independent.

Thus $I$ is the monochromatic $\alpha$-color class, and the independent sets $J_1,J_2,J_3$ cover all of $G\setminus I$. Therefore $\chi(G\setminus I)\leq 3$, proving the claim.
\end{proofc}

\begin{claim}\label{claim:make-nonrainbow}
There is a $5$-coloring reachable from $\varphi$ for which $C$ is not rainbow.
\end{claim}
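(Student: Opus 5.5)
The plan is to argue by contradiction: every coloring reachable from $\varphi$ keeps $C$ rainbow. We then derive a forbidden configuration from the fact that all cycle vertices are "frozen". Throughout we use $U=Z=F_j=\varnothing$, \cref{lem:basic}, \eqref{eq:2}, \cref{claim:triangled}, and reducedness.

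\textbf{Step 1: what blocks a single recoloring.} Fix $i$, and write $\beta=\varphi(c_{i+2})$ and $\varepsilon=\varphi(c_{i-2})$. Since $C$ is rainbow, $c_i$ can be recolored to $\beta$ unless some neighbor of $c_i$ has color $\beta$, and such a recoloring would make $C$ non-rainbow. The neighborhood $N(c_i)$ is contained in
\[
\{c_{i\pm1}\}\cup Y_i\cup Y_{i+2}\cup Y_{i-2}\cup R_{i+1}\cup R_{i-1}.
\]
Removing the vertices adjacent to $c_{i+2}$ leaves only $Y_{i-2}\cup R_{i-1}$. So the set $S_i$ of $\beta$-colored neighbors of $c_i$ is a nonempty subset of $Y_{i-2}\cup R_{i-1}$, and symmetrically for $\varepsilon$. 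The goal is to recolor all of $S_i$ away from $\beta$ without touching $C$ and without reintroducing $\beta$ in $N(c_i)$. Then $c_i\to\beta$ gives the desired coloring.

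\textbf{Step 2: each blocker is itself frozen.} Take $s\in S_i$. If some color $\gamma\ne\beta$ is missing from $N(s)\cup\{s\}$, recolor $s\to\gamma$; this never creates a new $\beta$-vertex. So if the claim fails, we may pass to a reachable coloring with $|S_i|$ minimal. Then some $s\in S_i$ sees all four other colors, and in particular has a neighbor $w$ with $\varphi(w)=\varepsilon$.

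\textbf{Step 3: locate $w$ and get a contradiction.} The vertex $w$ is not adjacent to $c_{i-2}$, since $c_{i-2}$ has color $\varepsilon$. Now split on the type of $s$.
\begin{itemize}
\item If $s\in Y_{i-2}$, then $s$ is adjacent to $c_{i-2}$, so $w$ must lie in $N(s)\setminus N(c_{i-2})$. Using \cref{lem:basic}\ref{it:YY},\ref{it:YY+2},\ref{itH4RY} and \eqref{eq:2}, this set is contained in $\{c_i\}\cup Y_{i-1}\cup Y_i\cup R_{i-2}\cup(\text{part of }Y_{i+1}\cup Y_{i-4})$.
\item If $s\in R_{i-1}$, then $w$ is in $R_{i-1\pm2}$ or in $Y$-sets. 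By \eqref{eq:2}, $s$ lies in $R_{i-1}^{i+1}$ or in $R_{i-1}^{i-3}$.
\end{itemize}
In each subcase I would look for a contradiction in one of three ways: the edge $sw$ together with an edge of $C$ on the far side forms an induced $2K_2$; or $s,w$ plus a common neighbor on $C$ form a triangle that is not dominating, contradicting \cref{claim:triangled}; or a $K_4$ appears. For example, if $s\in R_{i-1}$ and $w\in R_{i+1}$, then $s,w,c_i$ is a triangle, and $c_{i+2}$ and $c_{i-2}$ must then see it, which yields an $H_4$ or $W_5$. If no contradiction arises, the same analysis should instead show that the $\varepsilon$-neighbors of $s$ can themselves be moved off $\varepsilon$. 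This uses the fifth color, because $c_{i\pm2}$ and $w$'s other neighbors miss some color. After that, $s$ can be recolored to $\varepsilon$, contradicting minimality of $|S_i|$.

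\textbf{Expected obstacle.} The hard step is Step 3 when no direct forbidden subgraph appears and the argument turns into a short Kempe-type chain ($s$, then $w$, then $w$'s blockers). I would first try to bound the chain length at two by exploiting that $\varepsilon$- and $\beta$-vertices near $C$ are confined to $Y_{i\pm2}\cup R_{i\pm1}$, together with $2K_2$-freeness between the edges $sw$ and $c_{i+2}c_{i-2}$. If that fails, I would use the freedom to choose which index $i$, and which of the two nonneighbors $c_{i\pm2}$, is targeted. Choosing $i$ to minimize the total number of blockers should yield the contradiction.
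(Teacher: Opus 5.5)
There is a genuine gap, and it begins in Step 2: you are tracking the wrong color. Every blocker $s\in S_i\subseteq Y_{i-2}\cup R_{i-1}$ is adjacent to $c_{i-2}$, since both types have $c_{i-2}$ among their cycle neighbors. So the conclusion ``$s$ has a neighbor $w$ with $\varphi(w)=\varepsilon$'' is always witnessed by $w=c_{i-2}$ itself and carries no information. Step 3 silently discards this witness: $w$ is indeed not adjacent to $c_{i-2}$, but it may \emph{be} $c_{i-2}$, and your lists of candidates for $w$ omit it. The fallback ``move the $\varepsilon$-neighbors of $s$ off $\varepsilon$, then recolor $s\to\varepsilon$'' can therefore never work, because $c_{i-2}$ is one of those neighbors.

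The informative color is $\delta=\varphi(c_{i-1})$. A vertex of $Y_{i-2}$ already sees the colors of $c_{i-2},c_i,c_{i+1}$, so $\delta$ is its only alternative color, and a frozen blocker must have a $\delta$-colored neighbor. Your sample contradiction also fails. An edge between $R_{i-1}$ and $R_{i+1}$ is entirely allowed: by \eqref{eq:2}, every vertex of $R_{i-1}$ has a neighbor in $R_{i+1}$ or in $R_{i+2}$. Moreover, $c_{i-2}$ and $c_{i+2}$ see the triangle $s\,w\,c_i$ through $s$ and $w$ respectively, so no $H_4$ or $W_5$ is forced.

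More importantly, even with the correct color, no argument at a single index closes. Using \cref{lem:basic} and \eqref{eq:2}, the $\delta$-neighbor of a frozen blocker lies in $Y_i\cup R_{i+1}$, or (only when $s\in R_{i-1}$) in $R_{i+2}$. In the first case $w$ is itself a blocker of the move $c_{i+2}\to\delta$. That is a perfectly consistent local picture, so there is no forbidden subgraph to find near $s$.

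The missing idea is the paper's global argument over all five indices. Write $\alpha_k=\varphi(c_k)$, and let $B_k=\{v\in Y_{k-1}\cup R_k:\varphi(v)=\alpha_{k-2}\}$ be the blockers of $c_{k+1}\to\alpha_{k-2}$. Greedily recolor vertices of every $B_k$ to $\alpha_k$; this never enlarges any $B_j$. If all residual sets $B_k'$ remain nonempty, show that each vertex of $B_k'$ has a neighbor in $B_{k+2}'$. The alternative, an $R_k$-vertex with a neighbor in $R_{k-2}$, is handled by \eqref{eq:2}, which makes the \emph{different} move $c_k\to\alpha_{k-2}$ legal.

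Next, $2K_2$-freeness linearly orders the neighborhoods of vertices of $B_5'$ inside $B_2'$. Start from an inclusion-minimal one and walk $B_5'\to B_2'\to B_4'\to B_1'\to B_3'\to B_5'$. This closes up to vertices $x_1,\dots,x_5$ with every $x_ix_{i+2}$ an edge. By \cref{lem:basic}\ref{itH4RY} all $x_i$ are of the same kind, and completeness of $Y_{i-1}$ to $Y_i$ and of $R_i$ to $R_{i+1}$ then supplies the edges $x_ix_{i+1}$, giving a $K_5$. Your closing remarks about a Kempe-type chain and about choosing $i$ to minimize blockers point vaguely in this direction, but they provide neither the closing of the chain nor the final contradiction.
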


\begin{proofc}
If $C$ is already non-rainbow under $\varphi$, then there is nothing to prove. Thus, suppose that $C$ is rainbow. Write $\varphi(c_i)=\alpha_i$ for $i\in\{1,2,3,4,5\}$, where the colors $\alpha_1,\ldots,\alpha_5$ are pairwise distinct.

For $k\in\{1,2,3,4,5\}$, define
$$B_k=\{v\in Y_{k-1}\cup R_k:\varphi(v)=\alpha_{k-2}\}.$$
Clearly, each $B_k$ is monochromatic and the sets $B_k$ are pairwise disjoint. If $B_k=\varnothing$ for some $k$, then we can recolor $c_{k+1}\to\alpha_{k-2}$. Indeed,
$$N(c_{k+1})=\{c_k,c_{k+2}\}\cup R_k\cup R_{k+2}\cup Y_{k-1}\cup Y_{k+1}\cup Y_{k+3}.$$
The vertices $c_k,c_{k+2}$ are not colored $\alpha_{k-2}$, while every vertex of $R_{k+2}\cup Y_{k+1}\cup Y_{k+3}$ is adjacent to $c_{k-2}$ and hence is not colored $\alpha_{k-2}$. Thus the only possible blockers of $c_{k+1}\to\alpha_{k-2}$ lie in $Y_{k-1}\cup R_k$, and these are precisely the vertices of $B_k$. Hence the recoloring is legal and makes $C$ non-rainbow.

We may therefore suppose that $B_k\neq\varnothing$ for every $k$. Repeatedly recolor a vertex of some $B_k$ from $\alpha_{k-2}$ to $\alpha_k$ whenever this is legal, and continue until no such recoloring is possible. Let $B_k'\subseteq B_k$ be the vertices that remain colored $\alpha_{k-2}$. If $B_k'=\varnothing$ for some $k$, then again $c_{k+1}\to\alpha_{k-2}$ is legal and we are done. Thus we may suppose that $B_k'\neq\varnothing$ for every $k$.

Every vertex in $B_k'$ has a neighbor colored $\alpha_k$, since otherwise it could be recolored to $\alpha_k$. If $v\in Y_{k-1}\cap B_k'$, then such a neighbor must lie in $Y_{k+1}\cup R_{k+2}$, and hence lies in $B_{k+2}'$. If $v\in R_k\cap B_k'$, then such a neighbor must lie in $R_{k+2}\cup R_{k-2}$.

Suppose that $v\in R_k\cap B_k'$ has a neighbor $v'\in R_{k-2}$. Then $v\in R_k^{k-2}$, so by \eqref{eq:2}, $v$ is complete to $Y_{k+2}$, and by \cref{lem:basic}\ref{it:RR}, $v$ is complete to $R_{k+1}$. Since $\varphi(v)=\alpha_{k-2}$, no vertex in $Y_{k+2}\cup R_{k+1}$ is colored $\alpha_{k-2}$. But the only possible blockers of $c_k\to\alpha_{k-2}$ lie in $Y_{k+2}\cup R_{k+1}$. Hence this recoloring is legal and makes $C$ non-rainbow.

We may therefore suppose that no such vertex $v'$ exists. It follows that every vertex in $B_k'$ has a neighbor in $B_{k+2}'$.

By \cref{lem:basic}\ref{it:indep}, \ref{itH4RY}, the sets $B_5'$ and $B_2'$ are independent. Since $G$ is $2K_2$-free, the sets $N(x)\cap B_2'$, for $x\in B_5'$, are linearly ordered by inclusion. Indeed, otherwise there exist $x,x'\in B_5'$ and $y,y'\in B_2'$ such that $xy,x'y'\in E(G)$ and $xy',x'y\notin E(G)$. Since $B_5'$ and $B_2'$ are independent, the edges $xy$ and $x'y'$ then induce a $2K_2$, a contradiction. Choose $x_5'\in B_5'$ such that $N(x_5')\cap B_2'$ is inclusion-minimal. Starting from $x_5'$, choose
\[
x_2\in B_2'\cap N(x_5'),\quad
x_4\in B_4'\cap N(x_2),\quad
x_1\in B_1'\cap N(x_4),\quad
x_3\in B_3'\cap N(x_1),\quad
x_5\in B_5'\cap N(x_3).
\]
Each choice is possible because every vertex in $B_k'$ has a neighbor in $B_{k+2}'$. Since $x_5\in B_5'$ and $x_5'$ was chosen with an inclusion-minimal neighborhood in $B_2'$, we have
\[
N(x_5')\cap B_2'\subseteq N(x_5)\cap B_2'.
\]
Since $x_2\in N(x_5')\cap B_2'$, it follows that $x_5x_2\in E(G)$. Therefore, in the cyclic ordering $(x_1,x_2,x_3,x_4,x_5)$, we have $x_ix_{i+2}\in E(G)$ for every $i$.

Suppose first that $x_i\in R_i$ for some $i$. Since $x_{i+2}\in Y_{i+1}\cup R_{i+2}$ and $x_i$ is anticomplete to $Y_{i+1}$ by \cref{lem:basic}\ref{itH4RY}, the edge $x_ix_{i+2}$ forces $x_{i+2}\in R_{i+2}$. Repeating this around the cycle gives $x_i\in R_i$ for every $i$. Then $x_ix_{i+1}\in E(G)$ for every $i$ by \cref{lem:basic}\ref{it:RR}. Together with the edges $x_ix_{i+2}$, the vertices $x_1,\ldots,x_5$ induce a $K_5$, a contradiction.

Thus $x_i\in Y_{i-1}$ for every $i$. By \cref{lem:basic}\ref{it:YY}, we again have $x_ix_{i+1}\in E(G)$ for every $i$, and together with the edges $x_ix_{i+2}$ this gives a $K_5$, a contradiction.

Therefore, $\varphi$ can be recolored to a $5$-coloring for which $C$ is not rainbow.
\end{proofc}

By \cref{claim:make-nonrainbow}, there is a $5$-coloring reachable from $\varphi$ for which $C$ is not rainbow. By \cref{claim:nonrainbowC5}, $\varphi$ can therefore be recolored to a $5$-coloring having a color class $I$ with $\chi(G-I)\leq 3$. Since $\varphi$ was arbitrary, \cref{cor:mainrecolor} implies that $\mathcal R_5(G)$ is connected. By the hereditary class argument at the beginning of the proof, $G$ is recolorable, a contradiction.

\end{proof}

\section{Proof technique; overview and definitions}\label{sec:OverviewAndDefinitions}

To prove \cref{lem:B10} and \cref{lem:B11}, and to arrive at our final contradiction for \cref{thm:YES}, a deeper detailed analysis of the structure of reduced $(2K_2,K_4)$-free graphs containing a $5$-cycle is needed. To do so, in this section we develop the following strategy, first described at a high level, and then followed by the rigorous necessary definitions and examples of the definitions and machinery.

\subsection{Overview of proof strategy}\label{subsec:highlevel}

Let $C$ be a $5$-cycle in $G$. By the structural properties of $(2K_2,K_4)$-free graphs developed by Gaspers and Huang, and summarized in \cref{sec:ReductionsAndStructure}, every \emph{off-cycle} vertex, that is, every vertex in $V(G)\setminus V(C)$, has one of a fixed collection of possible neighborhoods on $C$. We refer to this as the \emph{type} of the vertex, coming from the set
\[\{U,Z,R_i,Y_i,F_i:i\in [5]\}.
\]

The general strategy is to begin with the cycle $C$ and reveal off-cycle vertices one at a time, thereby building a larger and larger induced subgraph $K$ of $G$. As vertices are revealed, we must keep track not only of their type, but also of their adjacencies to the off-cycle vertices that have already been added. Thus, at any stage, every unrevealed vertex is described by a \textit{profile}: its type together with its adjacencies to the currently revealed off-cycle vertices. This allows us to organize the vertices outside $K$ into a finite collection of possible attachment profiles.

Most conceivable profiles cannot actually occur. Any new vertex must attach to the current subgraph in a way that creates neither an induced $2K_2$ nor a $K_4$, and it must also respect the structural relations between the various types established in \cref{sec:ReductionsAndStructure}. These restrictions typically leave only a small collection of possible ways for an unrevealed vertex to attach to $K$.

Reducedness provides a second source of restrictions. It may happen that, within the currently revealed subgraph $K$, the neighborhood of one vertex is contained in the neighborhood of another. Such a containment cannot persist in the whole graph, since $G$ is reduced. Therefore some unrevealed vertex must distinguish the two vertices. We call such a vertex a \emph{rescuer}. The forbidden-subgraph conditions severely restrict which profiles can serve as rescuers, and in many cases no rescuer is possible at all. When this happens, the original attachment profile is eliminated.

This leads naturally to an iterative procedure. We first list the profiles that are compatible with the current subgraph and the forbidden-subgraph conditions. We then eliminate those that force a neighborhood containment with no possible rescuer. Once some patterns have been eliminated, others may lose their only possible rescuers and can be eliminated in turn. Repeating this process leaves only a small number of highly constrained configurations.

The remainder of this section makes this procedure precise. We first introduce the notation used to encode attachment patterns and rescuers, and then isolate two finite configurations that cannot occur in a reduced counterexample. These exclusions will later allow the structural arguments in \cref{sec:B10-B11} and \cref{sec:main-proof} and the subsequent case analysis to be combined cleanly.

\subsection{Definitions of profiles, rescuers, and elimination rounds}\label{subsec:definitions}

A \emph{core} is an induced subgraph $K$ of $G$ that contains an induced $5$-cycle $C$, together with an ordering $(v_1,\ldots,v_k)$ of its off-cycle vertices, where
\[V(K)\setminus V(C)=\{v_1,\ldots,v_k\}.\]
Fix such a core $K$. The purpose of the notation below is to describe the possible ways in which vertices outside $K$ can attach to it, and then to determine which of these possibilities could actually occur in the reduced graph $G$.

An \emph{attachment profile} with respect to $K$, or simply a \emph{profile} when the core is clear from context, is an expression
\[\sigma=T[\varepsilon_1\cdots\varepsilon_k],\]
where $T\in\{U,Z,R_i,Y_i,F_i:i\in\mathbb Z_5\}$ is a type and $\varepsilon_i\in\{0,1\}$ for every $i\in[k]$. The bit $\varepsilon_i$ records adjacency to $v_i$. Thus, a vertex $u\in V(G)\setminus V(K)$ has profile
\[\sigma=T[\varepsilon_1\cdots\varepsilon_k]\]
if $u$ has type $T$ and
\[uv_i\in E(G)\quad\Longleftrightarrow\quad \varepsilon_i=1\]
for every $i\in[k]$.

A profile $\sigma=T[\varepsilon_1\cdots\varepsilon_k]$ prescribes a unique neighborhood in $K$: namely, the neighbors on $C$ determined by the type $T$, together with those vertices $v_i$ for which $\varepsilon_i=1$. We write $K+u_\sigma$ for the graph obtained from $K$ by adjoining a new vertex $u_\sigma$ with precisely this neighborhood in $K$. A profile $\sigma$ is \emph{admissible} if $K+u_\sigma$ is $(2K_2,K_4)$-free.

When a vertex $u\in V(G)\setminus V(K)$ realizes $\sigma$, the graph $K+u_\sigma$ is naturally identified with the induced subgraph $G[V(K)\cup\{u\}]$. Whenever an admissible vertex is added to the core, it is appended to the ordered list of off-cycle vertices.

The following lemma gives a direct way to test admissibility. It also characterizes when two individually admissible attachments can occur together.

\begin{lemma}\label{lem:attachment-rules}
\begingroup
\setlength{\abovedisplayskip}{4pt}
\setlength{\belowdisplayskip}{4pt}

Let $K$ be a $(2K_2,K_4)$-free graph. Adjoining a new vertex $x$ to $K$ produces a $(2K_2,K_4)$-free graph if and only if
\begin{enumerate}[label=(\roman*), topsep=2pt, itemsep=5pt, parsep=0pt, partopsep=0pt]
    \item $K[N_K(x)]$ is triangle-free; and
    \item for every $u\in N_K(x)$, the set
    \[V(K)\setminus\bigl(N_K(x)\cup N_K(u)\bigr)\]
    is independent.
\end{enumerate}

Now let $x$ and $y$ be two new vertices such that $K+x$ and $K+y$ are each $(2K_2,K_4)$-free. Then:
\begin{enumerate}[label=(\alph*), topsep=2pt, itemsep=5pt, parsep=0pt, partopsep=0pt]
    \item if $x$ and $y$ are nonadjacent, then adjoining both $x$ and $y$ to $K$ preserves $(2K_2,K_4)$-freeness if and only if
    \[N_K(x)\setminus N_K(y)\text{ is complete to }N_K(y)\setminus N_K(x);\]

    \item if $x$ and $y$ are adjacent, then adjoining both $x$ and $y$ to $K$ preserves $(2K_2,K_4)$-freeness if and only if both
    \[N_K(x)\cap N_K(y)\qquad\text{and}\qquad V(K)\setminus\bigl(N_K(x)\cup N_K(y)\bigr)\]
    are independent.
\end{enumerate}

\endgroup
\end{lemma}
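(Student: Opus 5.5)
The statement is a direct case analysis of the possible induced copies of $2K_2$ and $K_4$ in the enlarged graph. Any new induced subgraph must contain the new vertex (or vertices), since $K$ itself is $(2K_2,K_4)$-free. So the plan is to enumerate, for each forbidden graph, how the new vertex can sit inside it, and translate each placement into the stated conditions. We prove each ``if and only if'' by showing that a violation of a condition yields a forbidden induced subgraph, and conversely that every forbidden induced subgraph witnesses a violation.

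For the single-vertex part, consider first an induced $K_4$ containing $x$. Its other three vertices are pairwise adjacent neighbors of $x$, i.e.\ a triangle in $K[N_K(x)]$; this gives (i). Next consider an induced $2K_2$ containing $x$, say with edges $xu$ and $ab$. Then $u\in N_K(x)$, and $a,b$ are nonadjacent to both $x$ and $u$. Hence $a,b\in V(K)\setminus(N_K(x)\cup N_K(u))$ and $ab$ is an edge there. Conversely, an edge $ab$ inside that set, together with $xu$, induces a $2K_2$, because nonadjacency of $a,b$ to $x,u$ is exactly membership in the set. This gives (ii). Note that $u$ itself is not in the set, so there are no degenerate cases.

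For two new vertices, each of $K+x$ and $K+y$ is already good, so only subgraphs containing both $x$ and $y$ matter.

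\textbf{Case (a), $xy\notin E$.} A $K_4$ cannot contain both $x$ and $y$. A $2K_2$ containing both must use edges $xa$ and $yb$ with $a,b\in V(K)$, $a\notin N(y)$, $b\notin N(x)$, and $ab\notin E$. This is exactly a non-edge between $N_K(x)\setminus N_K(y)$ and $N_K(y)\setminus N_K(x)$, and conversely such a non-edge produces the $2K_2$.

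\textbf{Case (b), $xy\in E$.} A $K_4$ through both $x$ and $y$ consists of $x,y$ and an edge in $N_K(x)\cap N_K(y)$. A $2K_2$ through both must have $xy$ as one of its edges, since otherwise $x$ and $y$ lie in different edges yet are adjacent. The other edge then lies in $V(K)\setminus(N_K(x)\cup N_K(y))$. Both conditions are therefore necessary and sufficient.

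The main obstacle is only bookkeeping: checking that every placement of $x$ and $y$ inside $2K_2$ or $K_4$ has been covered. In particular, in (b) we must rule out the $2K_2$ in which $x$ and $y$ lie in different edges (impossible since $xy\in E$), and in (a) we must rule out the $2K_2$ in which $x$ and $y$ lie in the same edge (impossible since $xy\notin E$).
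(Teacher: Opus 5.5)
Your proposal is correct and follows essentially the same route as the paper: any new forbidden subgraph must contain the new vertex (or both new vertices), and you classify how it can sit inside a $K_4$ or a $2K_2$ to obtain exactly conditions (i), (ii), (a) and (b). Your explicit remarks on the degenerate placements (that $u$ is not in $V(K)\setminus(N_K(x)\cup N_K(u))$, and which edge $x$ and $y$ must occupy in each case) are small additions that the paper leaves implicit.
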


\begin{proof}
Since $K$ is $(2K_2,K_4)$-free, every forbidden induced subgraph created by adjoining $x$ must contain $x$. The vertex $x$ lies in a $K_4$ precisely when three vertices of $N_K(x)$ induce a triangle, which gives (i). Similarly, $x$ lies in an induced $2K_2$ precisely when there is some $u\in N_K(x)$ and an edge whose two endpoints are adjacent to neither $x$ nor $u$. The possible endpoints of such an edge are exactly the vertices in $V(K)\setminus\bigl(N_K(x)\cup N_K(u)\bigr).$
Thus no such $2K_2$ exists if and only if this set is independent for every $u\in N_K(x)$, giving (ii).

Now suppose that $K+x$ and $K+y$ are each $(2K_2,K_4)$-free. Any forbidden induced subgraph created by adjoining both vertices must therefore contain both $x$ and $y$.

Suppose first that $x$ and $y$ are nonadjacent. Then they cannot lie together in a $K_4$, so we need only consider an induced $2K_2$. Such a $2K_2$ must consist of edges $xa$ and $yb$, where $a\in N_K(x)\setminus N_K(y)$ and $b\in N_K(y)\setminus N_K(x)$,
with $a$ and $b$ nonadjacent. Hence no such $2K_2$ exists if and only if $N_K(x)\setminus N_K(y)$ is complete to $N_K(y)\setminus N_K(x)$, proving (a).

Finally, suppose that $x$ and $y$ are adjacent. They lie together in a $K_4$ precisely when their common neighborhood $N_K(x)\cap N_K(y)$ contains an edge. Thus they are contained in no $K_4$ if and only if $N_K(x)\cap N_K(y)$ is independent. Likewise, any induced $2K_2$ containing both $x$ and $y$ must consist of the edge $xy$ together with an edge whose endpoints are adjacent to neither $x$ nor $y$. Such endpoints lie exactly in $V(K)\setminus\bigl(N_K(x)\cup N_K(y)\bigr).$
Therefore no such $2K_2$ exists if and only if this set is independent. This proves (b).
\end{proof}

Admissibility alone does not guarantee that a profile can occur in the reduced graph $G$. Suppose that an admissible profile $\sigma$ is realized by a vertex $u\in V(G)\setminus V(K)$. If, in the enlarged core $K+u$, we have
\[
u\preceq_{K+u}v
\]
for some $v\in V(K)$, then this containment cannot persist in $G$, since $G$ is reduced. Hence there must exist a vertex $w\in V(G)\setminus V(K+u)$ such that
\[
wu\in E(G)\qquad\text{and}\qquad wv\notin E(G).
\]
Similarly, if
\[
v\preceq_{K+u}u,
\]
then reducedness forces a vertex $w\in V(G)\setminus V(K+u)$ such that
\[
wv\in E(G)\qquad\text{and}\qquad wu\notin E(G).
\]
In either case, the profile of $w$ with respect to $K$, together with its required adjacency or nonadjacency to $u$, must give a $(2K_2,K_4)$-free extension of $K+u$. We call such a profile a \emph{rescuer}.

Suppose that $\sigma$ is an admissible profile and that a new vertex $u$ with profile $\sigma$ has been adjoined to $K$. Consider a comparison
\[
u\preceq_{K+u}v
\]
with $v\in V(K)$. A profile $\tau$ with respect to $K$ \emph{rescues the comparison $u\preceq_{K+u}v$} if a new vertex $w$ having profile $\tau$ relative to $K$ can be adjoined to $K+u$ so that
\[
wu\in E(G),\qquad wv\notin E(G),
\]
and the resulting profile of $w$ with respect to the enlarged core $K+u$ is admissible. Similarly, $\tau$ \emph{rescues the comparison $v\preceq_{K+u}u$} if $w$ can be adjoined so that
\[
wv\in E(G),\qquad wu\notin E(G),
\]
and its resulting profile with respect to $K+u$ is admissible.

Consequently, if an admissible profile $\sigma$ is realized in the reduced graph $G$, then every comparison created by adjoining a vertex with profile $\sigma$ must have a rescuing profile that is also realized in $G$. We use this necessary condition to eliminate profiles that cannot occur.

Let $\mathcal A_0(K)$ denote the set of all admissible profiles with respect to $K$. We refine this set in rounds. Suppose that $\mathcal A_{r-1}(K)$ is the collection of profiles remaining after the first $r-1$ rounds. A profile $\sigma\in\mathcal A_{r-1}(K)$ is \emph{eliminated in round $r$} if, after adjoining a new vertex $u$ with profile $\sigma$, there is some $v\in V(K)$ such that $u$ and $v$ are comparable in $K+u$, but the resulting comparison has no rescuing profile in $\mathcal A_{r-1}(K)$. We then let $\mathcal A_r(K)$ consist of the profiles in $\mathcal A_{r-1}(K)$ that are not eliminated in round $r$.

The use of rounds is important. A profile may survive initially because another profile could rescue the comparison it creates. If that rescuing profile is eliminated in an earlier round, however, it is no longer available, and the original profile may be eliminated in a later round. Thus
\[
\mathcal A_0(K)\supseteq \mathcal A_1(K)\supseteq \mathcal A_2(K)\supseteq\cdots.
\]
Since there are only finitely many profiles with respect to a fixed core, this sequence eventually stabilizes. When
\[
\mathcal A_r(K)=\mathcal A_{r-1}(K),
\]
we call the profiles in this final collection the \emph{surviving profiles}.

\subsection{Computer verification of the finite profile calculations}\label{subsec:computer}

The admissibility and rescuer calculations used later in the proof are finite and follow directly from \cref{lem:attachment-rules}, but carrying them out for every core by hand would be lengthy and repetitive. We therefore use a deterministic computer program to verify these calculations. The complete finite certificates are recorded in Appendices~\ref{app:finite-certificates} and~\ref{app:obstruction-witnesses}, and the verification code is available at
\url{https://recoloring.owenh-math.com}.

For each core appearing in a certificate, the program reconstructs the core and enumerates every attachment profile compatible with the hypotheses already established in the written proof. It then applies \cref{lem:attachment-rules} to verify the claimed list of admissible profiles. For each elimination row, the program checks the displayed neighborhood comparison and determines all admissible profiles that can rescue it. It then verifies that every such rescuer is unavailable, either because it was eliminated in an earlier round or because the required extension contains one of the explicitly recorded obstruction configurations. Profiles belonging to the same elimination round are removed simultaneously. The program also verifies the stated induced copies of $B_{10}$ and $B_{11}$ used as obstruction witnesses.

The computer calculation is used only for this finite bookkeeping. The structural arguments producing the relevant cores and branch assumptions, as well as the recoloring arguments that use the resulting certificates, are proved in the text. To illustrate exactly what is being checked, we work through one representative profile calculation and one elimination chain by hand below.

\subsection{An explicit example of profiles, rescuers, and elimination rounds}\label{subsec:example}

We give an explicit example to illustrate the definitions from the previous subsection. Let $K_0$ be a core consisting of a $5$-cycle $C=c_1c_2c_3c_4c_5c_1$ and ordered off-cycle vertices $(f,a)$, where $f\in F_1$, $a\in Y_1$, and $fa\notin E(G)$. Thus $f$ is adjacent to every vertex of $C$ except $c_1$, while $N_C(a)=\{c_1,c_3,c_4\}$ (see the left side of \cref{fig:core-example}).

Consider the profile $\sigma=Y_3[11]$. A vertex $x$ has this profile precisely when $x\in Y_3$ and $x$ is adjacent to both $f$ and $a$. We claim that this profile is admissible. Indeed, after adjoining such a vertex $x$, we have $N_{K_0}(x)=\{c_1,c_3,c_5,f,a\}$. The graph induced by this set is triangle-free, so \cref{lem:attachment-rules}(i) holds. For condition~(ii), as $u$ ranges over $c_1,c_3,c_5,f,a$, the set $V(K_0)\setminus\bigl(N_{K_0}(x)\cup N_{K_0}(u)\bigr)$ is, respectively, $\{c_4\},\varnothing,\{c_2\},\varnothing,\{c_2\}$. Each is independent, and hence $Y_3[11]$ is admissible (see the right side of \cref{fig:core-example}).

\begin{figure}[h!]
    \centering
    \begin{tikzpicture}[
    line cap=round,
    line join=round,
    cyclevertex/.style={
        circle,
        draw=black!80,
        fill=white,
        minimum size=5.6mm,
        inner sep=0pt,
        font=\small
    },
    offvertex/.style={
        circle,
        draw=black,
        fill=white,
        minimum size=5.6mm,
        inner sep=0pt,
        font=\small
    },
    newvertex/.style={
        circle,
        draw=black,
        fill=white,
        line width=1.1pt,
        minimum size=5.6mm,
        inner sep=0pt,
        font=\small
    },
    cycleedge/.style={
        draw=black!48,
        line width=0.75pt
    },
    attachmentedge/.style={
        draw=black!82,
        line width=0.95pt
    },
    xedge/.style={
        draw=black,
        line width=1.55pt
    }
]


\coordinate (c1L) at (0.00,  2.80);
\coordinate (c2L) at (0.00,  1.40);
\coordinate (c3L) at (0.00,  0.00);
\coordinate (c4L) at (0.00, -1.40);
\coordinate (c5L) at (0.00, -2.80);

\coordinate (fL) at (2.85,  0.75);
\coordinate (aL) at (2.85, -0.75);

\coordinate (c1R) at (7.40,  2.80);
\coordinate (c2R) at (7.40,  1.40);
\coordinate (c3R) at (7.40,  0.00);
\coordinate (c4R) at (7.40, -1.40);
\coordinate (c5R) at (7.40, -2.80);

\coordinate (fR) at (10.25,  1.10);
\coordinate (aR) at (10.25, -0.15);
\coordinate (xR) at (10.25, -1.65);


\draw[cycleedge] (c1L)--(c2L);
\draw[cycleedge] (c2L)--(c3L);
\draw[cycleedge] (c3L)--(c4L);
\draw[cycleedge] (c4L)--(c5L);

\draw[cycleedge]
    (c1L)
    .. controls (-1.45,2.30) and (-1.45,-2.30) ..
    (c5L);

\draw[attachmentedge] (fL)--(c2L);
\draw[attachmentedge] (fL)--(c3L);
\draw[attachmentedge] (fL)--(c4L);
\draw[attachmentedge] (fL)--(c5L);

\draw[attachmentedge] (aL)--(c1L);
\draw[attachmentedge] (aL)--(c3L);
\draw[attachmentedge] (aL)--(c4L);


\draw[cycleedge] (c1R)--(c2R);
\draw[cycleedge] (c2R)--(c3R);
\draw[cycleedge] (c3R)--(c4R);
\draw[cycleedge] (c4R)--(c5R);

\draw[cycleedge]
    (c1R)
    .. controls (5.95,2.30) and (5.95,-2.30) ..
    (c5R);

\draw[attachmentedge] (fR)--(c2R);
\draw[attachmentedge] (fR)--(c3R);
\draw[attachmentedge] (fR)--(c4R);
\draw[attachmentedge] (fR)--(c5R);

\draw[attachmentedge] (aR)--(c1R);
\draw[attachmentedge] (aR)--(c3R);
\draw[attachmentedge] (aR)--(c4R);


\draw[xedge] (xR)--(c1R);
\draw[xedge] (xR)--(c3R);
\draw[xedge] (xR)--(c5R);

\draw[xedge, bend right=50]
    (xR) to (fR);

\draw[xedge, bend right=40]
    (xR) to (aR);


\draw[-{Latex[length=2.1mm,width=1.5mm]}, draw=black!65, line width=0.8pt]
    (4,0) -- (5.5,0);


\node[cyclevertex] at (c1L) {$c_1$};
\node[cyclevertex] at (c2L) {$c_2$};
\node[cyclevertex] at (c3L) {$c_3$};
\node[cyclevertex] at (c4L) {$c_4$};
\node[cyclevertex] at (c5L) {$c_5$};

\node[offvertex] at (fL) {$f$};
\node[offvertex] at (aL) {$a$};

\node[cyclevertex] at (c1R) {$c_1$};
\node[cyclevertex] at (c2R) {$c_2$};
\node[cyclevertex] at (c3R) {$c_3$};
\node[cyclevertex] at (c4R) {$c_4$};
\node[cyclevertex] at (c5R) {$c_5$};

\node[offvertex] at (fR) {$f$};
\node[offvertex] at (aR) {$a$};
\node[offvertex, fill=lightgray!30!] at (xR) {$x$};


\node[font=\large] at (1.42,3.48) {$K_0$};
\node[font=\large] at (8.82,3.48) {$K_0+x$};

\end{tikzpicture}
    \caption{The core $K_0$ on the left, with off-cycle vertex order $(f,a)$, and the extension obtained by adjoining a vertex $x$ with admissible profile $Y_3[11]$.}
    \label{fig:core-example}
\end{figure}

In contrast, the profile $Z[10]$ is not admissible. A vertex $x$ with this profile is adjacent to $f$ and nonadjacent to $a$, while, since $x\in Z$, it has no neighbors on $C$. Then the two edges $xf$ and $ac_1$ induce a $2K_2$.

Next consider the profile $F_2[00]$. A direct application of \cref{lem:attachment-rules} shows that this profile is admissible. If $g$ is a vertex realizing this profile, then
\[N_{K_0}(g)=\{c_1,c_3,c_4,c_5\}\supseteq\{c_1,c_3,c_4\}=N_{K_0}(a).\]
Moreover, $ag\notin E(G)$, since the profile of $g$ is $F_2[00]$. Hence
\[a\preceq_{K_0+g}g.\]

Since $G$ is reduced, this comparison cannot persist in $G$. Thus there must be some vertex adjacent to $a$ and nonadjacent to $g$. A vertex $y$ with profile $Y_3[11]$ with respect to $K_0$ can do exactly this. By definition, $y$ is adjacent to $a$, and \cref{lem:attachment-rules}(a) shows that the nonedge $gy$ is permitted. Therefore $Y_3[11]$ rescues the comparison $a\preceq_{K_0+g}g$.

Let $K^*=K_0+\{g,y\}$, where $gy\notin E(G)$ and the ordered off-cycle vertices are now $(f,a,g,y)$. Applying \cref{lem:attachment-rules} exhaustively to each possible type and adjacency pattern gives exactly the following $23$ admissible profiles with respect to $K^*$:
\[\renewcommand{\arraystretch}{1.15}
\begin{array}{c@{\qquad}c@{\qquad}c@{\qquad}c@{\qquad}c}
Z[0000], & Z[1110], & Z[1111], & F_1[0001], & F_2[0000],\\
Y_1[0000], & Y_1[0001], & Y_2[0101], & Y_3[1000], & Y_3[1100],\\
Y_4[1011], & R_1[0111], & R_1[1110], & R_2[1000], & R_2[1110],\\
R_3[0011], & R_3[1111], & R_4[0000], & R_4[1110], & R_5[1000],\\
R_5[1001], & R_5[1011], & R_5[1110]. & &
\end{array}\]

Elimination rounds allow us to refine the initial list $\mathcal A_0(K^*)$. Consider the three admissible profiles
\[\rho=R_2[1110],\qquad \tau=R_5[1001],\qquad \sigma=F_2[0000].\]

Let $x$ be a vertex with profile $\rho$. Then
\[c_2\preceq_{K^*+x}x.\]
Checking the $23$ admissible profiles using \cref{lem:attachment-rules}(a)--(b) shows that none can rescue this comparison. Hence $\rho$ is eliminated in round~$1$, so $\rho\notin\mathcal A_1(K^*)$.

Next, let $x'$ have profile $\tau$. Then
\[x'\preceq_{K^*+x'}c_5.\]
The only admissible profile that can rescue this comparison is $\rho$. Since $\rho$ was eliminated in round~$1$, it is no longer available as a rescuer. Thus $\tau$ is eliminated in round~$2$.

Finally, let $x''$ have profile $\sigma$. Then
\[x''\preceq_{K^*+x''}g.\]
The only admissible profile that can rescue this comparison is $\tau$. Since $\tau$ was eliminated in round~$2$, the profile $\sigma$ is eliminated in round~$3$.

Thus the eliminations occur in the chain $\rho\longrightarrow\tau\longrightarrow\sigma$, where each profile is eliminated once its only possible rescuer has been removed in an earlier round. The finite certificates used later in the proof of \cref{thm:YES} record the same information for the larger cores that arise in the argument.

\section{Proofs of \cref{lem:B10} and \cref{lem:B11}}\label{sec:B10-B11}

We first define the two cores (and thus subgraphs) that appear in the statements of \cref{lem:B10} and \cref{lem:B11}. The two cores are illustrated in \cref{fig:B10B11}. In both cases,
$C:=c_1c_2c_3c_4c_5c_1$ denotes a distinguished induced $5$-cycle, and the vertices listed after $c_5$ are the off-cycle vertices in their prescribed order. 

\medskip
\noindent\textbf{The core $B_{10}$.}
The core $B_{10}$ has ordered vertex set $(c_1,c_2,c_3,c_4,c_5,f,a,g,y,r)$ where the off-cycle vertices are ordered as $(f,a,g,y,r)$ and they have types $(F_1,Y_1,F_2,Y_3,R_1)$, respectively. The only edges among the off-cycle vertices are $E\bigl(B_{10}[\{f,a,g,y,r\}]\bigr)=\{yf,ya,rf,ra,rg\}$.

\medskip
\noindent\textbf{The core $B_{11}$.}
The core $B_{11}$ has ordered vertex set $(c_1,c_2,c_3,c_4,c_5,f,a,g,x,b,q)$ where the off-cycle vertices are ordered as $(f,a,g,x,b,q)$ and they have types $(F_1,Y_1,F_2,Y_2,Y_5,R_2)$, respectively. The only edges among the off-cycle vertices are $E\bigl(B_{11}[\{f,a,g,x,b,q\}]\bigr)=\{xa,ba,bg,qf,qa,qg,qx\}$.

\begin{figure}[h!]
    \centering
    \begin{tikzpicture}[
    line cap=round,
    line join=round,
    vertex/.style={
        circle,
        draw=black,
        fill=white,
        minimum size=5.4mm,
        inner sep=0pt,
        font=\small
    },
    edge/.style={
        draw=black,
        line width=0.85pt
    }
]


\coordinate (c1L) at (0.0,  2.8);
\coordinate (c2L) at (0.0,  1.4);
\coordinate (c3L) at (0.0,  0.0);
\coordinate (c4L) at (0.0, -1.4);
\coordinate (c5L) at (0.0, -2.8);

\coordinate (fL)  at (4.2,  2.8);
\coordinate (aL)  at (4.2,  1.4);
\coordinate (gL)  at (4.2,  0.0);
\coordinate (yL)  at (4.2, -1.4);
\coordinate (rL)  at (4.2, -2.8);

\draw[edge] (c1L)--(c2L);
\draw[edge] (c2L)--(c3L);
\draw[edge] (c3L)--(c4L);
\draw[edge] (c4L)--(c5L);
\draw[edge] (c1L) to[bend right=45] (c5L);

\draw[edge] (fL)--(c2L);
\draw[edge] (fL)--(c3L);
\draw[edge] (fL)--(c4L);
\draw[edge] (fL)--(c5L);

\draw[edge] (aL)--(c1L);
\draw[edge] (aL)--(c3L);
\draw[edge] (aL)--(c4L);

\draw[edge] (gL)--(c1L);
\draw[edge] (gL)--(c3L);
\draw[edge] (gL)--(c4L);
\draw[edge] (gL)--(c5L);

\draw[edge] (yL)--(c1L);
\draw[edge] (yL)--(c3L);
\draw[edge] (yL)--(c5L);

\draw[edge] (rL)--(c2L);
\draw[edge] (rL)--(c5L);

\draw[edge] (fL) to[bend left=40] (yL);
\draw[edge] (aL) to[bend left=30] (yL);
\draw[edge] (fL) to[bend left=60] (rL);
\draw[edge] (aL) to[bend left=40] (rL);
\draw[edge] (gL) to[bend left=30] (rL);

\node[vertex] at (c1L) {$c_1$};
\node[vertex] at (c2L) {$c_2$};
\node[vertex] at (c3L) {$c_3$};
\node[vertex] at (c4L) {$c_4$};
\node[vertex] at (c5L) {$c_5$};

\node[vertex] at (fL) {$f$};
\node[vertex] at (aL) {$a$};
\node[vertex] at (gL) {$g$};
\node[vertex] at (yL) {$y$};
\node[vertex] at (rL) {$r$};

\node[font=\large] at (2.1, 3.7) {$B_{10}$};


\coordinate (c1R) at (9.4,  2.8);
\coordinate (c2R) at (9.4,  1.4);
\coordinate (c3R) at (9.4,  0.0);
\coordinate (c4R) at (9.4, -1.4);
\coordinate (c5R) at (9.4, -2.8);

\coordinate (fR)  at (13.6,  3.5);
\coordinate (aR)  at (13.6,  2.1);
\coordinate (gR)  at (13.6,  0.7);
\coordinate (xR)  at (13.6, -0.7);
\coordinate (bR)  at (13.6, -2.1);
\coordinate (qR)  at (13.6, -3.5);

\draw[edge] (c1R)--(c2R);
\draw[edge] (c2R)--(c3R);
\draw[edge] (c3R)--(c4R);
\draw[edge] (c4R)--(c5R);
\draw[edge] (c1R) to[bend right=45] (c5R);

\draw[edge] (fR)--(c2R);
\draw[edge] (fR)--(c3R);
\draw[edge] (fR)--(c4R);
\draw[edge] (fR)--(c5R);

\draw[edge] (aR)--(c1R);
\draw[edge] (aR)--(c3R);
\draw[edge] (aR)--(c4R);

\draw[edge] (gR)--(c1R);
\draw[edge] (gR)--(c3R);
\draw[edge] (gR)--(c4R);
\draw[edge] (gR)--(c5R);

\draw[edge] (xR)--(c2R);
\draw[edge] (xR)--(c4R);
\draw[edge] (xR)--(c5R);

\draw[edge] (bR)--(c2R);
\draw[edge] (bR)--(c3R);
\draw[edge] (bR)--(c5R);

\draw[edge] (qR)--(c1R);
\draw[edge] (qR)--(c3R);

\draw[edge] (aR) to[bend left=30] (xR);
\draw[edge] (aR) to[bend left=40] (bR);
\draw[edge] (gR) to[bend left=30] (bR);
\draw[edge] (xR) to[bend left=40] (qR);
\draw[edge] (fR) to[bend left=60] (qR);
\draw[edge] (aR) to[bend left=50] (qR);
\draw[edge] (gR) to[bend left=40] (qR);

\node[vertex] at (c1R) {$c_1$};
\node[vertex] at (c2R) {$c_2$};
\node[vertex] at (c3R) {$c_3$};
\node[vertex] at (c4R) {$c_4$};
\node[vertex] at (c5R) {$c_5$};

\node[vertex] at (fR) {$f$};
\node[vertex] at (aR) {$a$};
\node[vertex] at (gR) {$g$};
\node[vertex] at (xR) {$x$};
\node[vertex] at (bR) {$b$};
\node[vertex] at (qR) {$q$};

\node[font=\large] at (11.5, 4.4) {$B_{11}$};

\end{tikzpicture}
    \caption{The cores $B_{10}$ (left) and $B_{11}$ (right) with off-cycle vertex orders $(f,a,g,y,r)$ and $(f,a,g,x,b,q)$ respectively }
    \label{fig:B10B11}
\end{figure}

\begin{keylemma*}[\ref{lem:B10}]
If a reduced $(2K_2,K_4)$-free graph $G$ contains an induced copy of $B_{10}$, then $\mathcal R_5(G)$ is connected.
\end{keylemma*}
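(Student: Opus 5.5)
The plan is to follow the template of the proof of \cref{lem:H4}. First use the profile and rescuer machinery to pin down the structure of $G$ around the fixed induced copy of $B_{10}$. Then show that every $5$-coloring can be recolored so that some color class extends to an independent set $I$ with $\chi(G-I)\le 3$, and conclude with \cref{cor:mainrecolor}. Note that $G$ contains a $K_4$-free $4$-chromatic-type configuration: $c_1\dots c_5$ together with $f\in F_1$ and $g\in F_2$ give a structure close to $W_5$. By the result of Gaspers and Huang, $\chi(G)\le 4$. If $\chi(G)\le 3$, \cref{thm:reductions}\ref{it:3colorable} already gives connectivity of $\mathcal R_5(G)$, so we may assume $\chi(G)=4$.

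\textbf{Structural step.} Take $K=B_{10}$ as the core, with off-cycle order $(f,a,g,y,r)$. Compute $\mathcal A_0(B_{10})$ using \cref{lem:attachment-rules}, together with the constraints of \cref{lem:basic}. Then run elimination rounds to the surviving profiles, recording this as a finite certificate checked by the program of \cref{subsec:computer}.

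Since $F_1$ and $F_2$ are both nonempty, \cref{lem:basic}\ref{it:FF} forces $F_3=F_4=\varnothing$ (and $F_5$ or $F_4$ constrained similarly). The extra vertices $y$ and $r$ should kill most remaining types. I expect the surviving list to be small enough that each survivor set is independent, with explicit complete or anticomplete relations between survivors. Pairs of survivors must satisfy \cref{lem:attachment-rules}(a)--(b), so I would also have the program tabulate which pairs of surviving profiles can be adjacent. Every vertex of $G$ outside $B_{10}$ then falls into one of these classes.

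\textbf{Recoloring step.} The vertices $c_1$ and $c_4$ play the role they did in \cref{claim:nonrainbowC5}. Here $f$ and $g$ are adjacent to $c_4$ but not to $c_1$ and $c_2$ respectively, which suggests aiming for a color class containing $\{c_2,g\}$-type or $\{c_1,f\}$-type independent sets. Concretely, I would first show that any $5$-coloring can be recolored so that two specified nonadjacent core vertices (say $c_1$ and $f$, or $c_1$ and $c_4$) share a color. This is done by the blocker-set argument of \cref{claim:make-nonrainbow}: define the monochromatic blocker sets, recolor blockers greedily, and derive a $K_4$ or $2K_2$ from the leftover blockers using the linear ordering of neighborhoods forced by $2K_2$-freeness.

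Then greedily grow that color class, as with $Y_5\cup R_1\cup R_4$ earlier, into a maximal independent set $I$ drawn from the surviving profile classes. Finally, exhibit three independent sets covering $G-I$, built from the surviving classes plus the leftover sets $P$ and $Q$. As before, $P$ and $Q$ will be complete to each other by a $2K_2$ argument, and any troublesome class (analogous to $Y_1\cup Y_4$) is split through a neighborhood set like $B$ and $A=N(B)\cap Y_1$.

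\textbf{Main obstacle.} The main obstacle is the recoloring step. The surviving profiles must be organized into a $3$-coloring of $G-I$ that works uniformly, whatever leftover vertices the greedy phase creates. I would try first to make $I$ contain $c_1$ together with the class containing $f$'s non-neighbors on the cycle side, since $f$ and $g$ then become colorable together with $c_3$ and $c_5$ classes. If some surviving class resists, I would go back to the certificate and ask for an additional elimination showing that class is empty in the presence of the greedy leftovers.
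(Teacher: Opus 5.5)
There is a genuine gap. Your structural step stops before the decisive point, and the recoloring route you sketch for what you call the main obstacle relies on hypotheses that fail here.

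\textbf{What the structural step must deliver.} The elimination rounds against the core $B_{10}$ stabilize with six profiles: $R_1[11100]$, $R_3[00110]$, $Y_4[10110]$, $Y_1[00001]$, $Y_2[01011]$, $R_3[00111]$. The round mechanism cannot remove any more, because every comparison each of them creates still has a rescuer among the survivors. What finishes the analysis are arguments about several outside vertices at once:
\begin{itemize}
\item A vertex $v$ of profile $R_1[11100]$ agrees with $r$ on the core and on every survivor class except the vertices of $R_3[00110]\cup R_3[00111]$. That set is independent, so $2K_2$-freeness forces $N(v)$ and $N(r)$ to be nested, contradicting reducedness.
\item Once $R_1[11100]$ is gone, an $R_3[00110]$-vertex has no neighbor outside the core and is dominated by $c_3$.
\item A vertex $b$ of profile $Y_1[00001]$ needs rescuers against both $a$ and $g$. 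These must be a $Y_4[10110]$-vertex and a $Y_2[01011]$-vertex. Those two are adjacent, and together with $b$ and $c_4$ they form a $K_4$.
\end{itemize}
Reducedness (twins) then allows each of $Y_4[10110]$, $Y_2[01011]$, $R_3[00111]$ to be realized at most once, and the last cannot coexist with the other two. So $G$ is one of five explicit graphs on at most $12$ vertices. Your fallback, asking the certificate for an additional elimination, cannot produce this collapse, since none of these are single-rescuer eliminations.

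\textbf{Why the recoloring step is both unjustified and unnecessary.} The blocker argument of \cref{claim:make-nonrainbow} and the $P,Q,A,B$ construction of \cref{claim:nonrainbowC5} use several facts:
\begin{itemize}
\item $U=F_i=\varnothing$;
\item the splitting of each $R_i$ into $R_i^{i-2}$ and $R_i^{i+2}$;
\item \cref{lem:basic}\ref{itH4F}--\ref{itH4RY}.
\end{itemize}
All of these are consequences of $H_4$-freeness. Here $f$ together with $C$ already induces an $H_4$, so $F$-vertices can act as blockers and the closing $K_5$ contradiction is unavailable.

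Once $G$ is pinned down, no recoloring moves are needed; \cref{cor:mainrecolor} applies with $\psi=\varphi$:
\begin{itemize}
\item $B_{10}-c_5$ is $3$-colorable, so the color class of $c_5$ works.
\item In each $11$-vertex graph, a suitable vertex ($f$, $c_5$, or $c_3$) has a short induced path as its non-neighborhood. It therefore lies in only two or three maximal independent sets, and each has $3$-colorable complement.
\item In the $12$-vertex graph, call the added $Y_4$- and $Y_2$-vertices $z$ and $x$. The six vertices $c_4,y,c_5,z,f,x$ induce $K_6$ minus the matching $\{c_4y,c_5z,fx\}$. Every $5$-coloring therefore has a monochromatic nonedge, and its color class lies in an independent triple that is a class of a $4$-coloring.
\end{itemize}
A minor point: $F_1,F_2\neq\varnothing$ gives $F_3=F_4=F_5=\varnothing$ by \cref{lem:basic}\ref{it:FF}, not merely a constraint on $F_5$.
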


\begin{proof}
Let $B_{10}$ be an induced subgraph of a reduced $(2K_2,K_4)$-free graph $G$ with $C=c_1c_2c_3c_4c_5c_1$ inducing a $5$-cycle and off-cycle vertices ordered $(f,a,g,y,r)$. If $G$ is 3-colorable then we are done by \cref{thm:reductions}\ref{it:3colorable}, so we can assume $G$ is $4$-chromatic. We first determine the admissible profiles with respect to the core $B_{10}$.

\begin{claim}\label{clm:B10-admissible}
There are exactly $21$ admissible profiles with respect to $B_{10}$. They are listed below, where the bit strings are read in the order $(f,a,g,y,r)$.
\[
\begin{array}{@{}cl@{\qquad}|@{\qquad}cl@{}}
\toprule
\text{cycle type} & \text{bit strings} & \text{cycle type} & \text{bit strings}\\
\midrule
Z   & [00000],[11100]       & F_1 & [00011]\\
F_2 & [00001]               & Y_1 & [00001],[00011]\\
Y_2 & [01011]               & Y_3 & [10000],[11000]\\
Y_4 & [10110]               & R_1 & [01110],[11100]\\
R_2 & [10001],[11101]       & R_3 & [00110],[00111],[11110]\\
R_4 & [00000],[11100]       & R_5 & [10011],[10111]\\
\bottomrule
\end{array}
\]
\end{claim}

The verification of \cref{clm:B10-admissible} is finite and is carried out by the computer verification described in \cref{subsec:computer}. The program enumerates all attachment profiles with respect to $B_{10}$ and applies \cref{lem:attachment-rules}(i)--(ii) to each of them. The resulting list is exactly the $21$ profiles displayed above.

We next use reducedness to show that $15$ of these $21$ profiles cannot be realized in $G$. In each row below, $v$ denotes a hypothetical vertex realizing the displayed profile. Every comparison is taken in the induced graph $B_{10}+v$; to keep the table readable, we suppress the subscript and simply write $\preceq$. The final column gives the complete set of admissible profiles with respect to $B_{10}$ that can rescue the displayed comparison.

The local comparisons and the completeness of every rescuer list in the table are verified by the computer program using \cref{lem:attachment-rules}(a)--(b). Profiles in the same round are tested against the profiles remaining at the beginning of that round and are eliminated simultaneously. Thus a profile listed in round~$1$ has no possible rescuer, while for each profile in a later round every possible rescuer was eliminated in an earlier round.

\begin{center}
\small
\setlength{\tabcolsep}{4.5pt}
\renewcommand{\arraystretch}{0.96}
\begin{tabular}{@{}c l l l@{}}
\toprule
round & profile & comparison & possible rescuers\\
\midrule
1 & $Z[00000]$    & $v\preceq c_1$ & none\\
1 & $Z[11100]$    & $v\preceq c_4$ & none\\
1 & $F_1[00011]$  & $v\preceq f$   & none\\
1 & $Y_3[10000]$  & $v\preceq y$   & none\\
1 & $R_1[01110]$  & $v\preceq c_1$ & none\\
1 & $R_2[10001]$  & $v\preceq c_2$ & none\\
1 & $R_2[11101]$  & $c_2\preceq v$ & none\\
1 & $R_3[11110]$  & $v\preceq c_3$ & none\\
1 & $R_4[00000]$  & $v\preceq f$   & none\\
1 & $R_4[11100]$  & $v\preceq c_4$ & none\\
1 & $R_5[10011]$  & $v\preceq c_5$ & none\\
\midrule
2 & $F_2[00001]$  & $v\preceq g$   & $R_5[10011]$\\
2 & $Y_1[00011]$  & $v\preceq a$   & $Y_3[10000]$\\
\midrule
3 & $Y_3[11000]$  & $y\preceq v$   & $Y_1[00011]$\\
3 & $R_5[10111]$  & $c_5\preceq v$ & $F_2[00001]$\\
\bottomrule
\end{tabular}
\end{center}

The six profiles that remain after these rounds are
\[
\begin{aligned}
&\sigma_1=R_1[11100],\quad \sigma_2=R_3[00110],\quad \sigma_3=Y_4[10110],\\
&\sigma_4=Y_1[00001],\quad \sigma_5=Y_2[01011],\quad \sigma_6=R_3[00111].
\end{aligned}
\]

For each pair of these profiles, the computer verification applies the two-vertex tests in \cref{lem:attachment-rules}(a)--(b) and gives the following relation table. A $0$ means that the two corresponding profile classes are necessarily anticomplete, a $1$ means that they are necessarily complete, $*$ means that both an edge and a nonedge are permitted, and $\times$ means that the two profiles cannot be realized simultaneously. 

\[
\setlength{\arraycolsep}{5pt}
\renewcommand{\arraystretch}{0.95}
\begin{array}{c|cccccc}
 & \sigma_1 & \sigma_2 & \sigma_3 & \sigma_4 & \sigma_5 & \sigma_6\\
\hline
\sigma_1 & 0 & * & 0 & 1 & 1 & *\\
\sigma_2 &   & 0 & 0 & \times & \times & 0\\
\sigma_3 &   &   & 0 & 1 & 1 & \times\\
\sigma_4 &   &   &   & 0 & 1 & 0\\
\sigma_5 &   &   &   &   & 0 & \times\\
\sigma_6 &   &   &   &   &   & 0
\end{array}
\]

We now show that $\sigma_1$, $\sigma_2$, and $\sigma_4$ cannot be realized in $G$.

Suppose that $v$ has profile $\sigma_1=R_1[11100]$. By definition of the profile, $v$ and $r$ have the same neighborhood in $B_{10}$. Comparing the last bit of each surviving profile with the $\sigma_1$-row of the relation table shows that $v$ and $r$ also have the same adjacency to every vertex with profile $\sigma_1$, $\sigma_3$, $\sigma_4$, or $\sigma_5$. Therefore a rescuer cannot have such profiles. Thus they can differ only on vertices with profile $\sigma_2$ or $\sigma_6$.

The vertices with profiles $\sigma_2$ or $\sigma_6$ form an independent set by the relation table. The neighborhoods of $v$ and $r$ within this set must therefore be comparable by inclusion. Indeed, otherwise there are vertices $x$ and $y$ with profiles in $\{\sigma_2,\sigma_6\}$ such that $vx,ry\in E(G)$ and $vy,rx\notin E(G)$. Since $vr,xy\notin E(G)$, the edges $vx$ and $ry$ induce a $2K_2$, a contradiction. Since $v$ and $r$ have the same adjacency to every other vertex, it follows that either $N_G(v)\subseteq N_G(r)$ or $N_G(r)\subseteq N_G(v)$, contradicting reducedness. Hence no vertex outside $B_{10}$ realizes $\sigma_1$.

Next suppose that $p$ has profile $\sigma_2=R_3[00110]$. Since no vertex outside $B_{10}$ realizes $\sigma_1$, the relation table shows that $p$ has no neighbor outside $B_{10}$: it is anticomplete to vertices with profiles $\sigma_2$, $\sigma_3$, and $\sigma_6$, while $\sigma_4$ and $\sigma_5$ cannot coexist with $\sigma_2$. On the core,
\[N_{B_{10}}(p)=\{c_2,c_4,g,y\}\subseteq N_{B_{10}}(c_3).\]
Thus $p\preceq_G c_3$, contradicting reducedness. Hence no vertex outside $B_{10}$ realizes $\sigma_2$.

Now suppose that $b$ has profile $\sigma_4=Y_1[00001]$. On the core,
\[N_{B_{10}}(b)=\{c_1,c_3,c_4,r\}\subseteq N_{B_{10}}(a)\cap N_{B_{10}}(g),\]
so
\[b\preceq_{B_{10}+b}a\qquad\text{and}\qquad b\preceq_{B_{10}+b}g.\]
Since $G$ is reduced, neither of these neighborhood containments can persist in $G$. Therefore there exists a vertex $z\in N_G(b)\setminus N_G(a)$ and a vertex $x\in N_G(b)\setminus N_G(g)$.

Since neither $\sigma_1$ nor $\sigma_2$ is realized in $G$, the relation table shows that the only profiles whose vertices can be adjacent to $b$ are $\sigma_3$ and $\sigma_5$. A vertex with profile $\sigma_3=Y_4[10110]$ is nonadjacent to $a$, whereas a vertex with profile $\sigma_5=Y_2[01011]$ is adjacent to $a$. Hence $z$ has profile $\sigma_3$. Similarly, a vertex with profile $\sigma_3$ is adjacent to $g$, whereas a vertex with profile $\sigma_5$ is nonadjacent to $g$. Hence $x$ has profile $\sigma_5$.

The relation table gives $zx\in E(G)$. Moreover, $b$, $z$, and $x$ have cycle types $Y_1$, $Y_4$, and $Y_2$, respectively, so all three are adjacent to $c_4$. Hence $\{b,z,x,c_4\}$ induces a $K_4$, a contradiction. Therefore no vertex outside $B_{10}$ realizes $\sigma_4$.

Thus only
\[\sigma_3=Y_4[10110],\qquad \sigma_5=Y_2[01011],\qquad \sigma_6=R_3[00111]\]
can occur outside $B_{10}$. By the relation table, vertices with profiles $\sigma_3$ and $\sigma_5$ are complete to one another, while $\sigma_6$ cannot coexist with either $\sigma_3$ or $\sigma_5$.

Each of these three profiles is realized by at most one vertex. Indeed, suppose first that a vertex with profile $\sigma_6$ exists. Then no vertex with profile $\sigma_3$ or $\sigma_5$ exists. Two vertices with profile $\sigma_6$ would have the same neighborhood in $B_{10}$ and would be nonadjacent to one another, so they would have identical neighborhoods in $G$, contradicting reducedness.

We may therefore suppose that no vertex has profile $\sigma_6$. If two vertices have profile $\sigma_3$, then they have the same neighborhood in $B_{10}$, are nonadjacent to one another, and are both adjacent to every vertex with profile $\sigma_5$. Hence they have identical neighborhoods in $G$, again contradicting reducedness. Thus $\sigma_3$ is realized by at most one vertex. The same argument shows that $\sigma_5$ is realized by at most one vertex.

Consequently, $G$ is one of
\[B_{10},\qquad B_{10}+z,\qquad B_{10}+x,\qquad B_{10}+t,\qquad B_{10}+\{z,x\},\]
where $z$, $x$, and $t$ have profiles $\sigma_3$, $\sigma_5$, and $\sigma_6$, respectively, and $zx\in E(G)$ in the final case. Their neighborhoods in the core are
\[
\begin{aligned}
N_{B_{10}}(z)&=\{c_1,c_2,c_4,f,g,y\},\\
N_{B_{10}}(x)&=\{c_2,c_4,c_5,a,y,r\},\\
N_{B_{10}}(t)&=\{c_2,c_4,g,y,r\}.
\end{aligned}
\]

Let $\varphi$ be a $5$-coloring of $G$. We would like to prove that $\mathcal R_5(G)$ is connected. We show that, for each of the five possible graphs, we may apply \cref{cor:mainrecolor} directly with $\psi=\varphi$.

For $G=B_{10}$, we have the following partition of $V(G)$ into four independent sets:
\[
I_1=\{c_5\},\qquad I_2=\{c_1,c_3,r\},\qquad I_3=\{f,a,g\},\qquad I_4=\{c_2,c_4,y\}.
\]
In particular, $G-c_5$ is $3$-colorable. Let $S$ be the color class of $\varphi$ containing $c_5$. Then $S$ is independent and $G-S$ is an induced subgraph of $G-c_5$, so $\chi(G-S)\leq 3$. Therefore, \cref{cor:mainrecolor} applies with $I=S$, then $\mathcal R_5(G)$ is connected.

Next consider the three $11$-vertex graphs. For each graph, we choose a vertex $v$ and consider the color class $S$ of $\varphi$ containing $v$. Since $S$ is independent, every vertex of $S\setminus\{v\}$ is a nonneighbor of $v$. We extend $S$ to a maximal independent set $I$ containing $v$. Thus, it is enough to determine all maximal independent sets containing $v$ and verify that $\chi(G-I)\leq 3$ for each such set $I$.

For each of the three $11$-vertex graphs, the following table gives the chosen vertex $v$, every maximal independent set $I$ containing $v$, and a $3$-coloring of $G-I$.

\begin{center}
\small
\setlength{\tabcolsep}{3.5pt}
\renewcommand{\arraystretch}{0.98}
\begin{tabular}{@{}l l l@{}}
\toprule
graph; vertex & maximal independent set $I$ & $3$-coloring of $G-I$\\
\midrule
$B_{10}+z$; $f$
& $\{c_1,f\}$
& $\{c_3,c_5,z\};\ \{c_4,y,r\};\ \{c_2,a,g\}$\\
&
$\{f,a,g\}$
& $\{c_1,c_3,r\};\ \{c_2,c_4,y\};\ \{c_5,z\}$\\
\addlinespace[1pt]
$B_{10}+x$; $c_5$
& $\{c_3,c_5\}$
& $\{c_2,a,g\};\ \{c_4,y,r\};\ \{c_1,f,x\}$\\
&
$\{c_2,c_5,a\}$
& $\{c_1,c_3,r\};\ \{c_4,y\};\ \{f,g,x\}$\\
\addlinespace[1pt]
$B_{10}+t$; $c_3$
& $\{c_1,c_3,r\}$
& $\{c_2,c_4,y\};\ \{c_5,a,t\};\ \{f,g\}$\\
&
$\{c_1,c_3,t\}$
& $\{c_2,c_5,a\};\ \{f,g\};\ \{c_4,y,r\}$\\
&
$\{c_3,c_5,t\}$
& $\{c_1,f\};\ \{c_2,a,g\};\ \{c_4,y,r\}$\\
\bottomrule
\end{tabular}
\end{center}

To see that these lists are complete, the subgraphs induced by the nonneighbors of the selected vertices $f$, $c_5$, and $c_3$ are, respectively, the paths $ac_1g$, $c_2c_3a$, and $c_1c_5rt$.

The maximal independent sets containing the selected vertex are therefore exactly those listed in the table. Since the color class $S$ containing the selected vertex is contained in one of these maximal independent sets $I$, and the table shows that $\chi(G-I)\leq 3$, \cref{cor:mainrecolor} applies in each case. Thus the reconfiguration graph on $5$ colors of each of the three $11$-vertex graphs is connected.

It remains to consider $G=B_{10}+\{z,x\}$. The graph has the following two partitions into four independent sets:
\[
\begin{aligned}
\mathcal P:\;&\{c_2,a,g\},\ \{c_4,y,r\},\ \{c_3,c_5,z\},\ \{c_1,f,x\},\\
\mathcal Q:\;&\{c_2,c_4,y\},\ \{c_1,c_3,r\},\ \{c_5,a,z\},\ \{f,g,x\}.
\end{aligned}
\]

Consider any $5$-coloring of $G$. Among the six vertices $\{c_4,y,c_5,z,f,x\}$ two receive the same color. Since the coloring is proper, this monochromatic pair must be a nonedge. The only nonedges among these six vertices are
\[c_4y,\qquad c_5z,\qquad fx.\]

The common nonneighbors outside these pairs are, respectively,
\[\{c_2,r\},\qquad \{c_3,a\},\qquad \{c_1,g\},\]
and each displayed pair is an edge. Hence the entire color class containing the monochromatic pair is contained in one of the six independent triples
\[\{c_2,c_4,y\},\quad \{c_4,y,r\},\quad \{c_3,c_5,z\},\quad \{c_5,a,z\},\quad \{c_1,f,x\},\quad \{f,g,x\}.\]
Each of these triples is a class of one of the partitions $\mathcal P$ or $\mathcal Q$, and its complement is $3$-colorable by the other three classes of that partition. Thus every $5$-coloring has a color class contained in an independent set with $3$-colorable complement. By \cref{cor:mainrecolor}, then $\mathcal R_5(G)$ is connected.

This proves \cref{lem:B10}.
\end{proof}

\begin{keylemma*}[\ref{lem:B11}]
If a reduced $(2K_2,K_4,B_{10})$-free graph $G$ contains an induced copy of $B_{11}$, then $\mathcal R_5(G)$ is connected.
\end{keylemma*}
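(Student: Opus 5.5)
I will follow the template of the proof of \cref{lem:B10}, now with the core $B_{11}$ and off-cycle order $(f,a,g,x,b,q)$. If $G$ is $3$-colorable we are done by \cref{thm:reductions}\ref{it:3colorable}, so assume $\chi(G)=4$.

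\textbf{Step 1: admissible profiles.} I first enumerate all profiles $T[\varepsilon_1\cdots\varepsilon_6]$ with respect to $B_{11}$ and keep those passing \cref{lem:attachment-rules}(i)--(ii). The new ingredient compared with \cref{lem:B10} is that $G$ is also $B_{10}$-free. So I also discard every profile $\sigma$ for which $B_{11}+u_\sigma$ contains an induced $B_{10}$, recording the copy as an obstruction witness. This is exactly the "explicitly recorded obstruction configurations" mechanism of \cref{subsec:computer}. It is also used later: a rescuer is unavailable if the required two-vertex extension contains a $B_{10}$.

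\textbf{Step 2: elimination rounds.} Next I run the elimination rounds, each profile listed with its comparison and its (eventually empty) set of rescuers, until the list stabilizes. I then compute the pairwise relation table (entries $0$, $1$, $*$, $\times$) of the survivors via \cref{lem:attachment-rules}(a)--(b). From this table I eliminate further profiles by hand, using three arguments from \cref{lem:B10}:
\begin{enumerate}[label=(\alph*)]
\item A profile whose core neighborhood equals that of a core vertex can differ from it only on an independent set of $*$-profiles. By $2K_2$-freeness these neighborhoods are nested, so the two vertices are comparable, contradicting reducedness.
\item A profile with no surviving possible neighbors is dominated by a core vertex.
\item A profile whose two required rescuers must have distinct surviving profiles that are complete to each other yields a $K_4$ with a common cycle vertex.
\end{enumerate}

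\textbf{Step 3: bounding $G$.} Finally I show that each remaining profile is realized by at most one vertex: two realizers are nonadjacent twins, which contradicts reducedness. Hence $G$ is $B_{11}$ plus a bounded, explicitly listed set of extra vertices, whose mutual adjacencies are forced by the $0/1$ entries, with $\times$ pairs excluded.

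\textbf{Step 4: recoloring.} For each resulting graph I apply \cref{cor:mainrecolor} with $\psi=\varphi$. I pick a vertex $v$ whose non-neighborhood is a short path, list every maximal independent set $I\ni v$, and give a $3$-coloring of $G-I$. Where this fails, I use the pigeonhole trick from the $B_{10}+\{z,x\}$ case. I choose six vertices inducing a graph whose only nonedges are three disjoint pairs, each of which lies in a class of one of two $4$-partitions. Some colour class must contain such a pair, and its common non-neighbours outside the pair form an edge, so that class lies in a partition class whose complement is $3$-colourable.

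I expect the main obstacle to be Step 4, together with the finite bookkeeping before it. If several survivors remain with $*$ relations, the number of resulting graphs grows. I would then try to find a single vertex $v$, probably $c_4$ or a cycle vertex of small degree, that works uniformly for all extensions.
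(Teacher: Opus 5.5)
Your plan is essentially the paper's proof. Both enumerate the admissible profiles for $B_{11}$, discard any single-vertex extension that contains an induced $B_{10}$, treat a rescue as unavailable when its required edge creates a $B_{10}$, and finish with \cref{cor:mainrecolor} by listing the maximal independent sets through one well-chosen vertex. In the actual computation everything collapses. Four of the $16$ admissible profiles are killed directly by $B_{10}$, and the remaining twelve have no rescuer once the two mutual rescue edges $Y_5[011000]$--$Y_2[010001]$ and $F_1[000001]$--$R_1[011001]$ are ruled out by $B_{10}$. So $G\cong B_{11}$, your relation table, Step~3 and the pigeonhole trick are never needed, and the paper takes $v=c_3$, whose non-neighbourhood is the path $c_1c_5x$.
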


\begin{proof}
Let $B_{11}$ be an induced subgraph of a reduced $(2K_2,K_4)$-free graph $G$ with $C=c_1c_2c_3c_4c_5c_1$ inducing a $5$-cycle and off-cycle vertices ordered $(f,a,g,x,b,q)$. We first determine the admissible profiles with respect to the core $B_{11}$.

\begin{claim}\label{clm:B11-admissible}
There are exactly $16$ admissible profiles with respect to $B_{11}$. They are listed below, where the bit strings are read in the order $(f,a,g,x,b,q)$.
\[
\begin{array}{@{}cl@{\qquad}|@{\qquad}cl@{}}
\toprule
\text{cycle type} & \text{bit strings} & \text{cycle type} & \text{bit strings}\\
\midrule
U   & [000001]                 & Z   & [000000]\\
F_1 & [000001]                 & F_2 & [000011]\\
Y_1 & [000111]                 & Y_2 & [010001],[010011]\\
Y_5 & [011000],[011100]        & R_1 & [011001],[111001]\\
R_2 & [100110],[111100]        & R_3 & [111011]\\
R_4 & [111100]                 & R_5 & [101110]\\
\bottomrule
\end{array}
\]
\end{claim}

The verification of \cref{clm:B11-admissible} is finite and is carried out by the computer verification described in \cref{subsec:computer}. The program enumerates all attachment profiles with respect to $B_{11}$ and applies \cref{lem:attachment-rules}(i)--(ii) to each of them. The resulting list is exactly the $16$ profiles displayed above.

Four of these profiles cannot be realized in $G$ because adjoining a vertex with any one of them produces an induced copy of $B_{10}$:
\[
Y_5[011100],\qquad U[000001],\qquad R_1[111001],\qquad Y_2[010011].
\]
The computer verification checks each of these four exclusions by exhibiting and verifying an induced copy of $B_{10}$ in the corresponding extension of $B_{11}$.

It remains to show that none of the other twelve profiles can be realized in $G$. In each row below, $v$ denotes a hypothetical vertex realizing the displayed profile. Every comparison is taken in the induced graph $B_{11}+v$; to keep the table readable, we suppress the subscript and simply write $\preceq$. The final column gives the complete set of profiles, among the twelve still under consideration, that can rescue the displayed comparison.

\begin{center}
\small
\setlength{\tabcolsep}{4.5pt}
\renewcommand{\arraystretch}{0.96}
\begin{tabular}{@{}l l l@{}}
\toprule
profile & comparison & possible rescuers\\
\midrule
$Z[000000]$    & $v\preceq c_1$ & none\\
$Y_5[011000]$  & $v\preceq b$   & $Y_2[010001]$ via an edge\\
$R_2[111100]$  & $v\preceq q$   & none\\
$R_4[111100]$  & $v\preceq c_4$ & none\\
$R_2[100110]$  & $v\preceq c_2$ & none\\
$R_5[101110]$  & $v\preceq c_5$ & none\\
$F_1[000001]$  & $v\preceq f$   & $R_1[011001]$ via an edge\\
$Y_2[010001]$  & $v\preceq x$   & $Y_5[011000]$ via an edge\\
$R_1[011001]$  & $v\preceq c_1$ & $F_1[000001]$ via an edge\\
$F_2[000011]$  & $v\preceq g$   & none\\
$R_3[111011]$  & $v\preceq c_3$ & none\\
$Y_1[000111]$  & $v\preceq a$   & none\\
\bottomrule
\end{tabular}
\end{center}

The local comparisons and the completeness of every rescuer list in the table are verified by the computer program using \cref{lem:attachment-rules}(a)--(b). Thus eight of the twelve profiles have no possible rescuer. For the remaining four profiles, the only possible rescue occurs within one of the pairs
\[
\{Y_5[011000],Y_2[010001]\}
\qquad\text{or}\qquad
\{F_1[000001],R_1[011001]\},
\]
and in either case the rescue requires the two realizing vertices to be adjacent. The computer verification checks that each such edge produces an induced copy of $B_{10}$.

Since $G$ is $B_{10}$-free, neither of these rescue edges can occur. Hence none of the twelve displayed comparisons has a rescuer in $G$. Since $G$ is reduced, none of these twelve profiles can be realized outside $B_{11}$. Together with the four profiles excluded above, this shows that no admissible profile is realized outside the core. Therefore $G\cong B_{11}$.

It remains to show that if $G = B_{11}$, then $\mathcal R_5(G)$ is connected. To see this, the vertices nonadjacent to $c_3$ are $c_1,c_5,x$, and they induce the path $c_1c_5x$. Hence the maximal independent sets containing $c_3$ are exactly
\[
\{c_3,c_5\}\qquad\text{and}\qquad \{c_1,c_3,x\}.
\]
For each such independent set $I$, the graph $B_{11}-I$ is $3$-colorable, as shown below.

\begin{center}
\small
\setlength{\tabcolsep}{4pt}
\renewcommand{\arraystretch}{0.98}
\begin{tabular}{@{}c c@{}}
\toprule
maximal independent set $I$ & $3$-coloring of $B_{11}-I$\\
\midrule
$\{c_3,c_5\}$
& $\{c_2,c_4,q\};\ \{f,a,g\};\ \{c_1,x,b\}$\\
$\{c_1,c_3,x\}$
& $\{c_4,b,q\};\ \{c_2,c_5,a\};\ \{f,g\}$\\
\bottomrule
\end{tabular}
\end{center}

If $B_{11}$ is $3$-colorable, then \cref{thm:reductions}\ref{it:3colorable} applies. Otherwise $B_{11}$ is $4$-chromatic. Consider any $5$-coloring of $B_{11}$ and extend the color class containing $c_3$ to a maximal independent set $I$. By the preceding list, $\chi(B_{11}-I)\leq 3$. Hence \cref{cor:mainrecolor} applies, we have that $\mathcal R_5(G)$ is connected.

This proves \cref{lem:B11}.
\end{proof}

\section{Proof of \cref{thm:YES}}\label{sec:main-proof}

\begin{proof}[Proof of \cref{thm:YES}]
Suppose for a contradiction that \cref{thm:YES} is false. If $\mathcal R_5(H)$ were connected for every $4$-chromatic $(2K_2,K_4)$-free graph $H$, then \cref{thm:reductions}\ref{it:3colorable}, the $4$-colorability of $(2K_2,K_4)$-free graphs, and \cref{lem:hereditary} would imply \cref{thm:YES}. Thus there exists a $(2K_2,K_4)$-free graph with disconnected $\mathcal R_5$. Let $G$ be such a graph with the minimum number of vertices. Let $\mathcal H$ be the hereditary class of all proper induced subgraphs of $G$. For every $H\in\mathcal H$, the graph $\mathcal R_{\chi(H)+1}(H)$ is connected: this follows from \cref{thm:reductions}\ref{it:3colorable} when $\chi(H)\leq 3$, and from the choice of $G$ when $\chi(H)=4$. Hence \cref{lem:hereditary} implies that every proper induced subgraph of $G$ is recolorable. Moreover, $G$ is connected, since otherwise the $5$-coloring reconfiguration graph of each component would be connected. We therefore have the following properties.

\begin{itemize}
    \item $G$ is reduced. To see this, suppose that $G$ is not reduced. Then there exist distinct vertices $u,v\in V(G)$ with $u\preceq_G v$. Since $G-u$ is a proper induced subgraph of $G$, it is recolorable. Therefore, \cref{lem:reduced} implies that $G$ is recolorable, contradicting the fact that $\mathcal R_5(G)$ is disconnected. Thus $G$ is reduced.
    
    \item $\chi(G)=4$ by \cref{thm:reductions}\ref{it:3colorable} and 4-colorability of such graphs \cite{GH19}
    
    \item $G$ contains an induced $C_5$ by \cref{thm:reductions}\ref{it:C5recolor}.
    \item $G$ contains an induced $H_4$ by \cref{lem:H4}.
    \item $G$ is $B_{10}$-free by \cref{lem:B10}, since $G$ is reduced.
    \item $G$ is $B_{11}$-free by \cref{lem:B11}, since $G$ is reduced.
\end{itemize}

We start with a brief proof outline. Under these assumptions, we split the proof into cases depending on the type of core $K$. Then, as in the proofs of \cref{lem:B10} and \cref{lem:B11}, we list all admissible profiles with respect to $K$ and eliminate them in rounds using the assumptions of the current case. We support these eliminations using the tables in Appendices~\ref{app:finite-certificates} and~\ref{app:obstruction-witnesses}. In particular, in some rounds we eliminate profiles that would induce either a $B_{10}$ or a $B_{11}$; the corresponding obstruction witnesses are given in Appendix~\ref{app:obstruction-witnesses}. After this process, only finitely many graphs remain to check for recolorability. A contradiction then arises either from finding two comparable vertices, or from finding a vertex whose deletion leaves a $3$-colorable graph, in which case we apply Corollary~\ref{cor:mainrecolor} with the independent set consisting of that vertex.

We reiterate that Appendix~\ref{app:finite-certificates} is divided into sections according to the core. Each certificate lists the admissible profiles and records the elimination rounds or direct exclusions used in that case.

Let us proceed with the proof. For any induced $5$-cycle $C$ in $G$, we define the corresponding sets $U$, $R_i$, $Y_i$, $F_i$, and $Z$. Since $G$ contains an induced $H_4$, there is some choice of $C$ for which
\[
F=\bigcup_{i=1}^5 F_i\neq\varnothing.
\]
Among all pairs $(C,i)$ such that $C$ is an induced $5$-cycle and $F_i\neq\varnothing$, choose one that first minimizes $|U|$ and, subject to that, minimizes $|F_i|$. Up to symmetry, we may assume that $F_1\neq\varnothing$, and fix $f\in F_1$. We now prove a series of claims regarding the structure of $G$.

\begin{claim}\label{claim:min_rim}
For our choice of $C$ and $F_1$, the following hold.
\begin{enumerate}[label=\textup{(\roman*)},leftmargin=2.6em]
\item $U$ is complete to $R_i$ for every $i\in[5]$, and $|U|\leq 1$.
\item $F_3=F_4=\varnothing$ and $F_1=\{f\}$.
\item If $U=\{u\}$, then $u$ is complete to $Z$, $Y_1=\varnothing$, and $Y_3\cup Y_4\neq\varnothing$.
\end{enumerate}
\end{claim}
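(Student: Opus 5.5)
The idea is to exploit the extremal choice of $(C,F_1)$. Whenever some configuration violates one of the claimed properties, I would exhibit a different induced $5$-cycle $C'$ (or keep $C$ and pick a different index $j$) with $F'_j\neq\varnothing$ for which either $|U'|<|U|$, or $|U'|=|U|$ and $|F'_j|<|F_1|$. Each new cycle will be obtained from $C$ by swapping a single vertex $c_k$ for an off-cycle vertex with the same neighbours on $C-c_k$. Throughout I would use \cref{lem:basic} for the adjacency bookkeeping, and reducedness together with $(2K_2,K_4)$-freeness for the local arguments.

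\textbf{Part (i).} Suppose $u\in U$ is nonadjacent to some $r\in R_i$. Then $C'=c_1\cdots r\cdots c_5$, with $r$ in place of $c_i$, is an induced $5$-cycle, since $N_C(r)=\{c_{i-1},c_{i+1}\}$.
\begin{itemize}
\item The vertex $u$ is not in $U'$; it lies in $F'_i$ with respect to $C'$.
\item Every vertex of $U'$ is adjacent to $c_{i\pm1},c_{i\pm2}$. By \cref{lem:basic}\ref{it:UF} and the list of types, such a vertex is in $U\cup F_i$ (adjacent to $r$) or is $c_i$ itself (which is adjacent to $r$? no, $c_i$ is not adjacent to $r$). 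So $U'\subseteq (U\setminus\{u\})\cup (F_i\cap N(r))$. One must check that $F_i$-vertices adjacent to $r$ cannot appear; I expect the $K_4$ $\{f',r,c_{i-1},\cdot\}$ or a $2K_2$ to rule this out.
\end{itemize}
This gives $|U'|<|U|$ with $F'_i\ni u$ nonempty, contradicting minimality. Hence $U$ is complete to every $R_i$.

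For $|U|\le 1$: $U$ is independent, so two vertices $u,u'\in U$ have the same neighbourhood on $C$, and both are complete to $R$ and anticomplete to $F\cup Y$. Reducedness forces some $w$ adjacent to exactly one of them; its type must be $Z$ or an $R$-type. The $R$-type is impossible since $U$ is complete to $R$. For $w\in Z$ adjacent to $u$ but not $u'$, the edges $wu$ and $u'c_j$ would form a $2K_2$ unless $w$ is adjacent to $c_j$. So $w$ would have a neighbour on $C$, a contradiction.

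\textbf{Part (ii).} By \cref{lem:basic}\ref{it:FF}, $F_1\neq\varnothing$ already kills $F_3$ and $F_4$. For $F_1=\{f\}$: two vertices of $F_1$ are nonadjacent and have equal neighbourhoods on $C$, so some rescuer separates them. I would show that any rescuer $w$ lets me re-choose the cycle. Swapping $c_1$ for a suitable vertex, say a $Y_1$- or $R_1$-type vertex adjacent to one copy only, yields a $C'$ in which $F'_1$ loses a vertex while $U'$ does not grow. This is the delicate step: I must verify, type by type, that the rescuer's type forces such a swap to exist, and that $U$ is preserved, using part (i) to control $U'$.

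\textbf{Part (iii).} With $U=\{u\}$:
\begin{itemize}
\item \emph{$u$ is complete to $Z$.} A $z\in Z$ nonadjacent to $u$ would need a neighbour. Every edge $zw$ with $w\notin N(u)$ forms a $2K_2$ with $uc_j$ for suitable $j$, and $N(z)\cap N(u)$ is controlled by (i), so reducedness yields $z\preceq u$ or a contradiction.
\item \emph{$Y_1=\varnothing$.} A vertex $y\in Y_1$ together with $f\in F_1$ and $u$ should give a cycle swap (replace $c_1$ by $y$) producing a cycle with $u\notin U'$, contradicting minimality of $|U|$.
\item \emph{$Y_3\cup Y_4\neq\varnothing$.} Otherwise $N(f)\subseteq N(u)$, or $N(c_1)$ is contained in something. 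By \cref{lem:basic}\ref{it:FY}, $f$ is complete to $Y_3\cup Y_4$, and these are exactly the neighbours that $u$ lacks, so their absence gives a comparable pair.
\end{itemize}

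I expect the main obstacle to be the minimality swaps in (ii) and in $Y_1=\varnothing$. Checking that the new cycle's $U'$ does not grow needs careful case work over the rescuer's type, and if it becomes unwieldy I would fall back on the computer profile certificates.
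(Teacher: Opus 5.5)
Your swap for the first half of (i) is the right one, but you leave its crux unproved. The vertices that must be excluded from $U'$ are the $g\in F_i\cap N(r)$, and the obstruction is not a $K_4$. It is the induced $2K_2$ formed by $gr$ and $uc_i$: here $ur\notin E(G)$, $gc_i,rc_i\notin E(G)$, and $ug\notin E(G)$ by \cref{lem:basic}\ref{it:UF}.

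Your argument for $|U|\le 1$ is wrong. Since $u\in U$ is adjacent to every $c_j$, the edges $wu$ and $u'c_j$ never induce a $2K_2$. Moreover, a single $Z$-vertex adjacent to $u$ but not to $u'$ gives no contradiction by itself. You have to use reducedness in both directions. By the first half, $u$ and $u'$ have the same neighbours outside $Z$. So reducedness gives $w\in (N(u)\setminus N(u'))\cap Z$ and $w'\in (N(u')\setminus N(u))\cap Z$, and since $Z$ is independent, the edges $uw$ and $u'w'$ induce a $2K_2$.

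For $F_1=\{f\}$ you give only a plan, and the swap you suggest fails. $Y_1$ is anticomplete to $F_1$ by \cref{lem:basic}\ref{it:FY}, so no $Y_1$-vertex can distinguish $f$ from $f'$. Replacing $c_1$ by an $R_1$-vertex adjacent to $f$ puts $f$, and all of $U$ (complete to $R_1$ by (i)), into $U'$, so $|U'|$ grows. The missing idea is to split according to where $f$ and $f'$ can differ, namely $R_1\cup R_3\cup R_4\cup Z$.
\begin{itemize}
\item A difference at $r\in R_3$ is removed by swapping $c_3$ for $r$. Then $U^{(r)}=U$ by (i) and $F_3=\varnothing$. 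Also $F_1^{(r)}\subseteq (F_1\cup Y_2)\cap N(r)$, and $f'\notin F_1^{(r)}$. So minimality of $|F_1|$ forces some $y\in Y_2\cap N(r)$, and then $yr$ and $f'c_3$ induce a $2K_2$. The case $R_4$ is symmetric.
\item Differences inside the independent set $R_1\cup Z$ are then handled by the same two-sided $2K_2$ argument as for $U$.
\end{itemize}

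In (iii), replacing $c_1$ by $y\in Y_1$ does not give a $5$-cycle, because $N_C(y)=\{c_1,c_3,c_4\}$. The right cycle is $c_1c_2fc_4yc_1$, after assuming via \cref{lem:basic}\ref{it:YY+2} that $y$ is anticomplete to $Y_4$. This cycle has $c_3\in F_1^{(y)}$ and $u\notin U^{(y)}$. The real work, which your sketch omits, is showing $U^{(y)}=\varnothing$. Any member would lie in $U\cup F_3\cup F_5\cup Y_4$, and these are excluded respectively by $uf\notin E(G)$, by (ii), by $F_5$ being anticomplete to $F_1$, and by the choice of $y$.

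Your arguments that $u$ is complete to $Z$ and that $Y_3\cup Y_4\neq\varnothing$ are essentially fine. Falling back on the computer certificates is not available here. These swaps rely on the global extremal choice of $C$, which is not a local profile condition, and the certificates themselves already assume this claim.
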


\begin{proofc}
We first show (i). Let $u\in U$ and suppose there exists some $r\in R_i\setminus N(u)$. Let $C^{(r)}$ be the $5$-cycle obtained from $C$ by replacing $c_i$ with $r$. To avoid confusion, we let $U^{(r)}, R^{(r)}_i, Y^{(r)}_i, F^{(r)}_i$, and $Z^{(r)}$ denote the sets relative to $C^{(r)}$, where the indexing agrees with that of $C$ and $r$ is the $i$th vertex of $C^{(r)}$. Then $u\in F^{(r)}_i$. Moreover, every vertex of $U^{(r)}$ belongs to $U\cup F_i$ with respect to $C$, since it is adjacent to every vertex of $C\setminus\{c_i\}$. Since $u\notin U^{(r)}$, the minimality of our choice of $(C,i)$ gives $|U^{(r)}|\geq |U|$, and hence there exists some $g\in F_i\cap U^{(r)}$. In particular, $gr\in E(G)$. But then the edges $gr$ and $uc_i$ induce a $2K_2$: we have $ur\notin E(G)$ by assumption, $gc_i\notin E(G)$ since $g\in F_i$, and $ug\notin E(G)$ by \cref{lem:basic}\ref{it:UF}. This is a contradiction. Thus, $U$ is complete to $R_i$ for every $i\in[5]$.

It remains to show that $|U|\leq 1$. By definition, every vertex of $U$ is complete to $C$, while by \cref{lem:basic}\ref{it:UF} it is anticomplete to every $F_i\cup Y_i$, and by the previous paragraph it is complete to every $R_i$. Thus, any two vertices of $U$ can differ only in their adjacency to $Z$. Since both $U$ and $Z$ are independent by \cref{lem:basic}\ref{it:indep}, the $2K_2$-freeness of $G$ implies that the neighborhoods in $Z$ of the vertices of $U$ are linearly ordered by inclusion. Hence, if $u,u'\in U$ are distinct, then either $N(u)\subseteq N(u')$ or $N(u')\subseteq N(u)$. Since $u$ and $u'$ are nonadjacent, they are comparable, contradicting that $G$ is reduced. Therefore, $|U|\leq 1$.

Next we show (ii). By \cref{lem:basic}\ref{it:FF}, we have $F_3=F_4=\varnothing$ since $f\in F_1$. Now suppose that $f'\in F_1\setminus\{f\}$. We first show that $f$ and $f'$ have the same neighbors in $R_3$. Indeed, suppose that $r\in R_3$ with $fr\in E(G)$ and $f'r\notin E(G)$. Let $C^{(r)}$ be the $5$-cycle obtained from $C$ by replacing $c_3$ with $r$, and again we use $U^{(r)}, R^{(r)}_i, Y^{(r)}_i, F^{(r)}_i$, and $Z^{(r)}$ to denote the sets relative to $C^{(r)}$, where the indexing agrees with that of $C$ and $r$ is the third vertex of $C^{(r)}$. Then, by (i) and since $F_3=\varnothing$, we have $U^{(r)}=U$. Since $f\in F_1^{(r)}$ and $f'\in Y_2^{(r)}$, there must exist some $y\in Y_2$ such that $y\in F_1^{(r)}$; otherwise $|F_1^{(r)}|<|F_1|$, contradicting the minimality of our choice of $(C,1)$. But then $yr\in E(G)$, and the edges $yr$ and $f'c_3$ induce a $2K_2$ by \cref{lem:basic}\ref{it:FY}. Thus, $f$ and $f'$ have the same neighbors in $R_3$, and the symmetric argument shows that they have the same neighbors in $R_4$ as well. Consequently, they can differ only in their adjacency to $R_1\cup Z$. Now, we show that $f$ and $f'$ also have comparable neighborhoods in $R_1 \cup Z$. We know that $R_1$ and $Z$ are independent by \cref{lem:basic}\ref{it:indep} and $Z$ is anticomplete to $R_1$ by \cref{lem:basic}\ref{it:ZR}. Suppose that there exist $x \in N(f) \setminus N(f')$ and $x' \in N(f') \setminus N(f)$. Then $x, x' \in R_1 \cup Z$, so $xx' \notin E(G)$. Moreover, $ff' \notin E(G)$ by \cref{lem:basic}\ref{it:indep}, since $F_1$ is independent. Hence the edges $fx$ and $f'x'$ induce a $2K_2$, a contradiction. Therefore $N(f) \subseteq N(f')$ or $N(f') \subseteq N(f)$, so $f$ and $f'$ are comparable, contradicting the reducedness of $G$. Hence $F_1 = \{f\}$.

Finally, we show (iii). Suppose $U = \{u\}$. If there exists $z\in Z\setminus N(u)$, then by \cref{lem:basic}\ref{it:ZR}, $z$ has a neighbor $x\in Y_i\cup F_i$ for some $i\in [5]$, and by \cref{lem:basic}\ref{it:UF} either the edges $zx,uc_{i}$ or $zx, uc_{i+1}$ induce a $2K_2$ depending on the type of $x$. Thus, $u$ is complete to $Z$. 

Next suppose that $y\in Y_1$. By \cref{lem:basic}\ref{it:YY+2}, $y$ is anticomplete to at least one of $Y_3$ and $Y_4$. Without loss of generality, suppose that $y$ is anticomplete to $Y_4$. Consider the induced $5$-cycle $C^{(y)}=c_1c_2fc_4yc_1$. Note that $c_3\in F^{(y)}_1$. Thus, by the minimality of our choice of $C$, we must have $|U^{(y)}|\geq |U|=1$. Let $u'\in U^{(y)}$. Since $u'$ is adjacent to $c_1,c_2,c_4$, we have $u'\in U\cup F_3\cup F_5\cup Y_4$. However, none of these possibilities can occur. We have $u'\notin U$ since $u$ is the unique vertex of $U$ and $uf\notin E(G)$; $F_3=\varnothing$ by (ii); $u'\notin Y_4$ since $u'y\in E(G)$; and $u'\notin F_5$ since $u'f\in E(G)$ while $F_5$ is anticomplete to $F_1$ by \cref{lem:basic}\ref{it:FF}. This is a contradiction. Hence $Y_1=\varnothing$.

Lastly, if $Y_3\cup Y_4 = \varnothing$, then every neighbor of $f$ is a neighbor of $u$. This follows from the definitions, (i), the fact that $u$ is complete to $Z$, and \cref{lem:basic}. Since $fu\notin E(G)$, this contradicts the reducedness of $G$. Therefore $Y_3\cup Y_4\neq\varnothing$.
\end{proofc}

\begin{claim}\label{claim:U}
    $U$ is empty
\end{claim}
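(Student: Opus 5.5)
Suppose for contradiction that $U=\{u\}$; this is the only alternative, by \cref{claim:min_rim}(i). By \cref{claim:min_rim}, $u$ is complete to $C$, to every $R_i$, and to $Z$, and anticomplete to $F\cup\bigcup_i Y_i$. Also $Y_1=\varnothing$, $F_1=\{f\}$, $F_3=F_4=\varnothing$, and $Y_3\cup Y_4\neq\varnothing$. The plan is to follow the paper's method: build a small core around $u$ and $f$, then use admissibility and rescuer elimination to show that no consistent reduced graph remains.

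By the reflection symmetry fixing $c_1$ (which swaps $Y_3$ and $Y_4$), we may pick $y\in Y_3$. Recall that $f$ is complete to $Y_3\cup Y_4$ by \cref{lem:basic}\ref{it:FY}, and $u$ is nonadjacent to both $f$ and $y$. We then work with the core $K$ consisting of $C$ with off-cycle vertices ordered $(f,u,y)$.

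\begin{enumerate}[label=\textup{(\arabic*)},leftmargin=2.4em]
\item Enumerate the admissible profiles with respect to $K$ using \cref{lem:attachment-rules}. We restrict in advance to profiles compatible with the facts above: no $U$- or $Y_1$-type vertices, no $F_3$ or $F_4$ vertices, and $F_1$-vertices excluded because $F_1=\{f\}$. The adjacencies to $u$ and $f$ are already forced by type.
\item Run elimination rounds. Here we also use $B_{10}$- and $B_{11}$-freeness to kill profiles whose extension produces one of those cores, exactly as in the proof of \cref{lem:B11}.
\end{enumerate}

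The key comparison to exploit is between $u$ and $c_i$. In $K$, $N(c_1)\subseteq N(u)$ fails only through $u$'s absence of neighbours in $Y$-sets, so reducedness forces specific rescuers: a vertex distinguishing $c_i$ from $u$ must be a $Y$- or $F$-vertex adjacent to $c_i$. I would show that each such rescuer either contradicts the minimality of $|U|$ or creates a $2K_2$ with an edge $uc_j$.

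The minimality argument is the useful extra tool. Take an induced $5$-cycle obtained from $C$ by swapping one cycle vertex for a $Y$- or $R$-vertex, for example $c_1c_2fc_4c_5$-type cycles, or $c_3$ replaced by $y$-related vertices. Then $u$ drops out of $U$ for the new cycle. Minimality of $|U|$ then forces a new $U$-vertex, which must lie in some $F_j$ or $Y_j$. This parallels the proofs of \cref{claim:min_rim}(i) and (iii). A short case check on which vertex this can be should yield a $2K_2$ or a $K_4$.

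I expect the main obstacle to be the simultaneous handling of the two possibilities $Y_3\neq\varnothing$ and $Y_4\neq\varnothing$ together with $F_2$ and $F_5$. If $F_2\neq\varnothing$, or both $Y_3$ and $Y_4$ are nonempty, the core grows and the profile lists become long. In that case I would rely on a computer-verified certificate, in the spirit of Appendix~\ref{app:finite-certificates}, listing the admissible profiles for the core $(f,u,y)$ and possibly its extension by a $Y_4$ or $F_2$ vertex, and then eliminate every profile.

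If elimination leaves a finite residual graph, I would finish by exhibiting two comparable vertices, contradicting reducedness. Alternatively, I would find a vertex $v$ with $G-v$ $3$-colorable and apply \cref{cor:mainrecolor} with the colour class of $v$, contradicting that $\mathcal R_5(G)$ is disconnected.
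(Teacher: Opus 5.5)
Your framework matches the paper's: assume $U=\{u\}$, import Claim~\ref{claim:min_rim}, take $y\in Y_3$ via the reflection fixing $c_1$, work with the core $(C;f,u,y)$, and finish either by reducedness or by \cref{cor:mainrecolor} with a singleton colour class. However, the step you call the key comparison does no work.

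\begin{itemize}
\item Since $u$ is adjacent to every $c_i$, we have $c_i\in N(u)\setminus N(c_i)$. So $u$ and $c_i$ are never comparable, and reducedness demands no rescuer for them.
\item Vertices distinguishing $c_i$ from $u$ already exist, namely $f\in N(c_2)\setminus N(u)$ and $y\in N(c_1)\setminus N(u)$. So ``each such rescuer contradicts minimality or creates a $2K_2$ with an edge $uc_j$'' would have to refute $f$ itself. The $2K_2$ route cannot do this, because $N(f)\subseteq N(u)\cup N(c_1)$.
\item Your minimality step is also misdescribed. Swapping $c_i$ for an $R_i$-vertex keeps $u$ in the new $U$, since $u$ is complete to every $R_i$ by Claim~\ref{claim:min_rim}(i).
\end{itemize}

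The paper uses no further minimality here, and no $B_{10}$ or $B_{11}$ obstructions either: certificates A.1--A.4 cite none.

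The genuine gap is the ending. You expect to ``eliminate every profile'' or to reach a finite residual graph, but elimination does neither. The paper splits on whether both $Y_3$ and $Y_4$ are nonempty.

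\begin{itemize}
\item If both are nonempty, the core $(C;f,u,y,y')$ leaves four survivors $F_5[0010]$, $F_2[0001]$, $R_3[0111]$, $R_4[0111]$. These are pairwise incompatible (each edge or nonedge gives a $K_4$ or a $2K_2$). By reducedness, at most one vertex lies outside the core, and explicit $4$-colorings with a singleton class finish.
\item If $Y_4=\varnothing$, the core $(C;f,u,y)$ leaves eight survivors. Nothing bounds how many vertices realize $Y_3[100]$, $Z[110]$, $Z[111]$, $R_5[110]$, $Y_2[001]$, $R_4[011]$, so there is no finite graph to inspect. The paper sub-cases on whether $R_1[110]$ or $F_5[001]$ is realized.
\begin{itemize}
\item If neither is realized, it exhibits the $4$-coloring $\{u\}$, $\{c_1,f\}\cup Y_2\cup R_4$, $\{c_2,c_4\}\cup Y_3$, $\{c_3,c_5\}\cup R_5\cup Z$ of all of $G$. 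Hence $G-u$ is $3$-colorable and \cref{cor:mainrecolor} applies.
\item If one of them is realized, it extends the core (certificates A.3 and A.4) until at most one outside vertex remains. It then colors the result with $\{c_5\}$ or $\{c_1\}$ as a singleton class.
\end{itemize}
\end{itemize}

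Your plan has nothing that handles a possibly large set of surviving vertices. This structural coloring of whole type classes, together with the sub-case split that makes it possible, is what you would need to supply.
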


\begin{proofc}
   Suppose not. By \cref{claim:min_rim}, we have
\[U=\{u\},\quad F_1=\{f\},\quad Y_1=\varnothing,\quad Y_3\cup Y_4\neq\varnothing, \quad\text{and}\quad u\text{ is complete to }R_i\cup Z\text{ for all }i\in[5].\]
Thus, together with \cref{lem:basic}, with respect to the core $(C;f,u)$, any profile $T[\varepsilon_1\varepsilon_2]$ is forbidden if $T\in\{U,F_1,Y_1\}$, or if $T\in\{R_1,R_2,R_3,R_4,R_5,Z\}$ and $\varepsilon_2=0$. Moreover, every vertex in $Y_3\cup Y_4$ has profile $Y_3[10]$ or $Y_4[10]$, respectively. Since $Y_3\cup Y_4\neq\varnothing$, at least one of these profiles is realized.

\textbf{Case 1: There exists $y\in Y_3$ and $y'\in Y_4$.} Define the core $K=(C,f,u,y,y')$. We refer the reader to Appendix~\ref{app:B1} for the admissible profiles and elimination rounds. The surviving admissible profiles are
$$F_5[0010],\quad F_2[0001],\quad R_3[0111],\quad R_4[0111].$$

Moreover, no two distinct profiles from this list can both be realized. To see this, let $v$ and $w$ realize distinct profiles $\sigma$ and $\tau$ from the above list. Using \cref{lem:basic}, the following table gives either an induced $2K_2$ or a $K_4$ for every possible pair.

\begin{center}\small
\begin{tabular}[h!]{@{}l l l l@{}}
\toprule
$\sigma$ & $\tau$ & edge $vw$ gives & nonedge $vw$ gives\\
\midrule
$F_{5}[0010]$ & $F_{2}[0001]$ & $K_{4}$ on $c_{3},c_{4},v,w$   & $2K_{2}$ with edges $vc_{2}$, $wc_{5}$\\
$F_{5}[0010]$ & $R_{3}[0111]$ & $2K_{2}$ with edges $vw$, $c_{5}f$ & $2K_{2}$ with edges $vc_{3}$, $wy'$\\
$F_{5}[0010]$ & $R_{4}[0111]$ & $K_{4}$ on $c_{3},y,v,w$       & $2K_{2}$ with edges $vc_{2}$, $wc_{5}$\\
$F_{2}[0001]$ & $R_{3}[0111]$ & $K_{4}$ on $c_{4},y',v,w$      & $2K_{2}$ with edges $vc_{5}$, $wc_{2}$\\
$F_{2}[0001]$ & $R_{4}[0111]$ & $2K_{2}$ with edges $vw$, $c_{2}f$ & $2K_{2}$ with edges $vc_{4}$, $wy$\\
$R_{3}[0111]$ & $R_{4}[0111]$ & $K_{4}$ on $y,y',v,w$          & $2K_{2}$ with edges $vc_{2}$, $wc_{5}$\\
\bottomrule
\end{tabular}
\end{center}

Then $V(G)\setminus V(K)$ has at most one vertex. Indeed, if two vertices realize the same profile, then every vertex outside $K$ realizes that same profile, since no two distinct surviving profiles can be realized simultaneously. These vertices are pairwise nonadjacent by \cref{lem:basic}\ref{it:indep} and have the same neighborhood in $K$, so they have the same neighborhood in $G$, contradicting reducedness.

If $V(G)=V(K)$, or $V(G)\setminus V(K)=\{t\}$ where $t$ realizes either $R_3[0111]$ or $R_4[0111]$, then we have the following $4$-coloring.
$$I_1=\{u\}\quad I_2=\{c_1,f\}\cup\bigl(V(G)\setminus V(K)\bigr)\quad I_3=\{c_2,c_4,y\}\quad I_4=\{c_3,c_5,y'\}.$$ Therefore, we apply Corollary~\ref{cor:mainrecolor} with the color class containing $u$, a contradiction.

Now suppose that $V(G)\setminus V(K)=\{t\}$, where $t$ realizes either $F_5[0010]$ or $F_2[0001]$. We have the following $4$-coloring.
$$I_1=\{c_1\}\quad I_2=\{c_2,c_4,y\}\quad I_3=\{c_3,c_5,y'\}\quad I_4=\{u,f,t\}.$$ Thus $G-c_1$ is $3$-colorable. For any $5$-coloring of $G$, let $S$ be the color class containing $c_1$. Since $G-S$ is an induced subgraph of $G-c_1$, we have $\chi(G-S)\leq 3$. Therefore, Corollary~\ref{cor:mainrecolor} applies with $I=S$, a contradiction.

\textbf{Case 2: Exactly one of $Y_3,Y_4$ is nonempty.} Without loss of generality, let $Y_4=\varnothing$, and let $y\in Y_3$. Define the core $K=(C;f,u,y)$. We refer the reader to Appendix~\ref{app:B2} for the admissible profiles and elimination rounds. The surviving profiles are
\[
Y_{3}[100],\quad Z[110],\quad R_{5}[110],\quad R_{1}[110],\quad
F_{5}[001],\quad Y_{2}[001],\quad R_{4}[011],\quad Z[111].
\]

Since these profiles exhaust all vertices outside $K$, in the colorings below we continue to use $Y_3,Z,R_5,Y_2$, and $R_4$ for the corresponding vertex sets of $G$.

We now split according to whether the profiles $R_1[110]$ and $F_5[001]$ are realized.

\medskip
\noindent\textbf{Neither $R_1[110]$ nor $F_5[001]$ is realized.}
Then every vertex outside $K$ belongs to $Y_3\cup Z\cup R_5\cup Y_2\cup R_4$. We have the following $4$-coloring:
\[
I_1=\{u\}\quad
I_2=\{c_1,f\}\cup Y_2\cup R_4,\qquad
I_3=\{c_2,c_4\}\cup Y_3,\qquad
I_4=\{c_3,c_5\}\cup R_5\cup Z.
\]
Thus $G-u$ is $3$-colorable. For any $5$-coloring of $G$, let $S$ be the color class containing $u$. Since $G-S$ is an induced subgraph of $G-u$, we have $\chi(G-S)\leq 3$. Therefore, Corollary~\ref{cor:mainrecolor} applies with $I=S$, a contradiction.

\medskip
\noindent\textbf{The profile $R_1[110]$ is realized.}
Let $r\in R_1$ realize $R_1[110]$ and extend the core to $K'=(C;f,u,y,r)$. We refer the reader to Appendix~\ref{app:B3} for the admissible profiles and elimination rounds, which leave only $F_5[0011]$. Therefore, $V(G)\setminus V(K')$ has at most one vertex. We have the following $4$-coloring:
\[
I_1=\{c_5\}\quad
I_2=\{c_2,c_4,y\},\quad
I_3=\{c_1,c_3,r\},\quad
I_4=\{u,f\}\cup\bigl(V(G)\setminus V(K')\bigr).
\]
Letting $S$ be the color class containing $c_5$, Corollary~\ref{cor:mainrecolor} applies with $I=S$, a contradiction.

\medskip
\noindent\textbf{The profile $F_5[001]$ is realized.}
Let $g'\in F_5$ realize $F_5[001]$. Then $R_1=\varnothing$. Extend the core to $K''=(C;f,u,y,g')$. We refer the reader to Appendix~\ref{app:B4} for the admissible profiles and elimination rounds, which leave only $R_5[1101]$ and $Y_2[0011]$. These two profiles cannot both be realized: if $v\in R_5$ realizes $R_5[1101]$ and $w\in Y_2$ realizes $Y_2[0011]$, then an edge $vw$ gives a $K_4$ on $c_4,g',v,w$, while a nonedge gives an induced $2K_2$ with edges $vu$ and $wy$.

Therefore, $V(G)\setminus V(K'')$ has at most one vertex. If $V(G)=V(K'')$, or $V(G)\setminus V(K'')=\{t\}$ where $t$ realizes $Y_2[0011]$, then
\[
I_1=\{c_5\}\quad
I_2=\{c_1,c_3\}\cup\bigl(V(G)\setminus V(K'')\bigr),\qquad
I_3=\{c_2,c_4,y\},\qquad
I_4=\{u,f,g'\}
\]
is a $4$-coloring. If $V(G)\setminus V(K'')=\{q'\}$ where $q'$ realizes $R_5[1101]$, then
\[
I_1=\{c_1\}\quad
I_2=\{c_3,c_5,q'\},\qquad
I_3=\{c_2,c_4,y\},\qquad
I_4=\{u,f,g'\}
\]
is a $4$-coloring. In either case, letting $S$ be the color class containing $c_5$ and $c_1$ respectively, and applying Corollary~\ref{cor:mainrecolor} with $I=S$, gives a contradiction.

Thus every possibility leads to a contradiction, and hence $U=\varnothing$.
\end{proofc}

\begin{claim}\label{claim:overlap}
    At least one of $Y_1$ and $F_2\cup F_5$ is empty.
\end{claim}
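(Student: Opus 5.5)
We argue by contradiction and reuse the core-and-elimination machinery from the earlier cases. Suppose $Y_1\neq\varnothing$ and $F_2\cup F_5\neq\varnothing$. By the reflection symmetry of $C$ fixing $c_1$ (which swaps $c_2\leftrightarrow c_5$, $c_3\leftrightarrow c_4$, and hence $F_2\leftrightarrow F_5$, $Y_3\leftrightarrow Y_4$, while fixing $F_1$ and $Y_1$), we may assume $F_2\neq\varnothing$. Fix $g\in F_2$ and $a\in Y_1$. By \cref{claim:U}, $U=\varnothing$, and by \cref{claim:min_rim}, $F_1=\{f\}$ and $F_3=F_4=\varnothing$. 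By \cref{lem:basic}\ref{it:FF}, $f$ and $g$ are nonadjacent, so $F_5=\varnothing$ because $F_2\neq\varnothing$. By \cref{lem:basic}\ref{it:FY}, $f\in F_1$ is anticomplete to $Y_1$, so $fa\notin E(G)$. Also $g\in F_2$ is complete to $Y_4\cup Y_5$ and anticomplete to $Y_1\cup Y_2\cup Y_3$, so $ga\notin E(G)$. Thus $(C;f,a,g)$ is exactly the core $K^*$-type configuration from \cref{subsec:example}: $K_0=(C;f,a)$ with $g$ realizing $F_2[00]$.

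As computed in \cref{subsec:example}, $a\preceq g$ within $K_0+g$, since $N_{K_0}(a)=\{c_1,c_3,c_4\}\subseteq N_{K_0}(g)$. Reducedness therefore forces a rescuer $w$ adjacent to $a$ and nonadjacent to $g$. The plan is to enumerate, with \cref{lem:attachment-rules}, all admissible profiles of $w$ relative to $(C;f,a,g)$ that satisfy $wa\in E(G)$, $wg\notin E(G)$, and the current restrictions ($U=F_3=F_4=F_5=\varnothing$, $F_1=\{f\}$). The example indicates that $Y_3[11]$ is one such rescuer; the other candidates should be a short list of $R$- and $Y$-types.

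For each rescuer type we branch on a new core and run the elimination rounds. If $w$ has type $Y_3$ with $wf,wa\in E$, we obtain the core $K^*=(C;f,a,g,y)$ with its $23$ listed admissible profiles. The key observation is that adjoining an $R_1$-vertex adjacent to $f,a,g$ and nonadjacent to $y$ gives precisely $B_{10}$. So in this branch every profile $R_1[1110]$ is excluded outright, and further profiles are excluded when they complete a $B_{10}$ or a $B_{11}$. We then iterate the rescuer eliminations as before, with certificates recorded in the appendix. The goal in each branch is to reduce the surviving profiles to a small mutually incompatible set, so that $V(G)\setminus V(K)$ has at most one or two vertices. In each remaining finite graph we either exhibit comparable vertices or a vertex $v$ with $\chi(G-v)\le3$, and then apply \cref{cor:mainrecolor} with the color class of $v$, giving a contradiction. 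Other rescuer types, such as an $R_2$- or $R_5$-vertex adjacent to $a$, are handled by the same scheme; some may be killed immediately by a $2K_2$ with $g$, or by producing $B_{10}$ or $B_{11}$.

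I expect the main obstacle to be the elimination in the $Y_3[11]$ branch. There the surviving profile list is large, and several profiles rescue each other only via edges, as happened in \cref{lem:B11}. To break such mutual-rescue cycles, I would first check whether each rescuing edge creates an induced $B_{10}$ or $B_{11}$. If that fails, I would use the $2K_2$ linear-ordering argument from the $\sigma_1$ case of \cref{lem:B10}, which forces comparability inside an independent profile class.
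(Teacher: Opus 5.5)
Your setup is correct, and your framework is the paper's: the core $(C;f,a,g)$, the comparison $a\preceq g$, a forced rescuer, elimination with $B_{10}/B_{11}$ obstructions, and a final application of \cref{cor:mainrecolor}. The gap is that the proposal never determines the rescuers or closes a branch, and the concrete predictions it makes point away from where the work is.

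Relative to $(C;f,a,g)$, exactly two admissible profiles contain $a$ but not $g$: $Y_3[110]$ and $Y_2[010]$. Vertices of $R_1$, $Y_4$, $Y_5$ are adjacent to $g$ by \cref{lem:basic}. An $R_2$- or $R_5$-vertex $w$ with $wa\in E(G)$ and $wg\notin E(G)$ makes $wa, c_5g$ an induced $2K_2$, so your ``$R_2/R_5$ rescuers'' do not exist. The $Y_3[110]$ branch, which you expect to be the main obstacle, dies at once. Once $y$ is added, $N(c_2)=\{c_1,c_3,f\}\subseteq\{c_1,c_3,c_5,f,a\}=N(y)$. Vertices of $Y_2$, $Y_4$, $R_3$ are adjacent to $y$, and a $Y_5$- or $R_1$-vertex missing $f$ gives the $2K_2$ $yf, wg$. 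So the only vertex that could be adjacent to $c_2$ but not to $y$ is the $R_1$-vertex adjacent to $f,a,g$, which you yourself observed completes $B_{10}$. The paper records this as a round-$1$ elimination of $Y_3[110]$ already in the core $(C;f,a,g)$ (witness B.1). No mutual-rescue cycles or linear-ordering arguments arise.

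What is missing is the branch that carries the weight. Every vertex of $Y_2$ realizes $Y_2[010]$, so the first step yields some $x\in Y_2$. The decisive computation is that the core $(C;f,a,g,x)$ admits \emph{no} surviving profile. This takes two elimination rounds and twelve obstruction witnesses, four of them induced copies of $B_{11}$ (Certificate A.6, rows B.2--B.13). Hence $V(G)=V(C)\cup\{f,a,g,x\}$, with no leftover vertices rather than the ``one or two'' you anticipate. The partition $\{c_3\},\{c_1,c_4\},\{c_2,c_5,a\},\{f,g,x\}$ shows $\chi(G-c_3)\le 3$, and \cref{cor:mainrecolor} applied to the color class of $c_3$ gives the contradiction. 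Your proposal identifies neither this rescuer, nor this core, nor this terminal graph, so as it stands it is a plan for a computation rather than a proof. A small point: $F_5=\varnothing$ follows directly from \cref{lem:basic}\ref{it:FF}, not from $fg\notin E(G)$.
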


\begin{proofc}
    Suppose not. Without loss of generality, we can choose $a\in Y_1$ and $g\in F_2$. The vertices $f,a,g$ are pairwise nonadjacent by Lemma \ref{lem:basic}. Consider the core $K=(C,f,a,g)$, where $a\in Y_1[0]$ and $g\in F_2[00]$. Note that $a\preceq_K g$, because
    \[
    N_K(a)=\{c_1,c_3,c_4\}\subseteq\{c_1,c_3,c_4,c_5\}=N_K(g).
    \]
    We refer the reader to Appendix \ref{app:B5} for the admissible profiles and elimination rounds. Among the seventeen surviving profiles, $Y_2[010]$ is the only rescuer for $a$. Therefore there must be a vertex $x\in Y_2[010]$.
 
    Now consider the core $K'=(C,f,a,g,x)$. We refer the reader to Appendix \ref{app:B6} for the admissible profiles and elimination rounds, which leave no surviving profiles. Thus, $V(G)=V(K')$. Then $G$ has the following $4$-coloring
    \[
    I_1=\{c_3\}\quad I_2=\{c_1,c_4\}\quad I_3=\{c_2,c_5,a\}\quad I_4=\{f,g,x\}.
    \]
    Letting $S$ be the color class containing $c_3$, Corollary~\ref{cor:mainrecolor} applies with $I=S$, a contradiction.
\end{proofc}
 
\begin{claim}\label{claim:Y1}
    $Y_1$ is empty.
\end{claim}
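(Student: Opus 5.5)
We argue by contradiction: suppose some $a\in Y_1$ exists. By \cref{claim:U} we have $U=\varnothing$. By \cref{claim:overlap}, $F_2\cup F_5=\varnothing$. By \cref{claim:min_rim}(ii), $F_3=F_4=\varnothing$ and $F_1=\{f\}$. By \cref{lem:basic}\ref{it:FY}, $f$ is anticomplete to $Y_1$, so $fa\notin E(G)$. The natural core is therefore $K=(C;f,a)$. Every vertex outside $K$ has a type in $\{Z\}\cup\{Y_i,R_i:i\in[5]\}$, since the types $U,F_1,\dots,F_5$ are all either empty or already exhausted. We also have $a\in Y_1[0]$.

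First we locate a comparison in $K$ that reducedness forces someone to rescue. In $K$ we have $N_K(a)=\{c_1,c_3,c_4\}$ and $N_K(c_2)=\{c_1,c_3,f\}$. The vertex $f$ satisfies $N_K(f)=\{c_2,c_3,c_4,c_5\}$, so $f$ and $a$ are incomparable, but $a$ versus $c_2$ and similar pairs give candidate containments once further vertices are added. I would proceed exactly as in the proofs of \cref{lem:B10} and \cref{claim:overlap}. I would enumerate the admissible profiles relative to $K$ using \cref{lem:attachment-rules}, discarding the now-forbidden types. I would then run elimination rounds, additionally discarding any profile whose extension contains an induced $B_{10}$ or $B_{11}$, since $G$ is free of both. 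The finite bookkeeping would be recorded as a further certificate in Appendix~\ref{app:finite-certificates}, with $B_{10}$/$B_{11}$ witnesses in Appendix~\ref{app:obstruction-witnesses}.

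The expectation is that the surviving profiles are few and force a small number of extra vertices. Likely candidates are vertices of $Y_3$ or $Y_4$ adjacent to $f$ (as in the example with $Y_3[11]$), or of $R_1$ adjacent to both. Whenever a surviving profile is the unique rescuer of a comparison that must be broken, I would branch on its realization and extend the core, as with $x\in Y_2[010]$ in \cref{claim:overlap}. I would then repeat the enumeration on the larger core. One useful a priori comparison is this: if $Y_3\cup Y_4$ contains no neighbor of $a$, then $N(a)\subseteq N(c_2)$ may be forced using \cref{lem:basic}\ref{it:YY},\ref{it:RR}. That containment would force a rescuer and hence a concrete branching.

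At each leaf, once no profiles survive or only mutually exclusive ones do, I would bound $|V(G)\setminus V(K)|\le1$ using reducedness, as in Case~1 of \cref{claim:U}. I would then exhibit an explicit $4$-coloring with a singleton class $\{v\}$, or list the maximal independent sets through a chosen vertex, and apply \cref{cor:mainrecolor} to reach a contradiction. The main obstacle is the branching step. The core $(C;f,a)$ is small, so the initial admissible list will be large, and several comparisons may have multiple rescuers. I would first try adding a rescuer of $a\preceq c_2$-type containments to shrink the list quickly. If branching proliferates, I would use the freedom in choosing $C$ to impose a further normalization, for example minimizing $|Y_1|$, mirroring the minimality arguments in \cref{claim:min_rim}.
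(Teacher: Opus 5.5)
There is a genuine gap. What you have written is a plan to run the profile machinery, not an argument, and the concrete handles it offers do not work.

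\textbf{The proposed comparison does not exist.} In the core $K=(C;f,a)$ no two vertices are comparable, so reducedness gives you nothing at this stage. Moreover, $a$ and $c_2$ can never become comparable, whatever vertices are added, because $c_4\in N(a)\setminus N(c_2)$ and $f\in N(c_2)\setminus N(a)$. So the ``a priori comparison'' $N(a)\subseteq N(c_2)$ that you intend to branch on is impossible.

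\textbf{Elimination on $(C;f,a)$ does not close.} The paper's certificate for exactly this core yields no survivors only under extra branch assumptions: that $Y_2=Y_5=\varnothing$, and that none of $Y_3[11]$, $Y_4[11]$, $R_1[11]$ is realized. Each of these must be disposed of separately, and that is where the content of the claim lies. Your guess that a $Y_3[11]$ vertex is a likely survivor is the opposite of what happens; ruling it out is the main work.

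\textbf{Key missing step: $a$ is anticomplete to $Y_3\cup Y_4$.} The paper's proof proceeds as follows.
\begin{itemize}
\item If $y\in Y_3\cap N(a)$, then in $(C;f,a,y)$ both $c_2\preceq y$ and $c_4\preceq y$ hold. Reducedness therefore forces two rescuers, $p\in N(c_2)\setminus N(y)$ and $q\in N(c_4)\setminus N(y)$.
\item These must be adjacent, since otherwise $pc_2,qc_4$ induce a $2K_2$.
\item Their profiles are limited to $Y_5[010],R_1[010],R_1[110]$ and $Y_1[000],R_5[100],R_5[110]$ respectively. A pairwise check, using one $B_{10}$ obstruction, leaves only two pairs.
\item Each surviving pair creates a new comparison, $q\preceq c_5$ or $q\preceq a$. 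Every possible rescuer of that comparison produces an induced $B_{10}$ or $B_{11}$.
\end{itemize}
This joint two-rescuer argument is not something the round-elimination procedure does on its own, since that procedure checks rescuers one comparison at a time.

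\textbf{Remaining case split.} Once this is established, the paper splits into three cases.
\begin{itemize}
\item $Y_2\cup Y_5\neq\varnothing$. Then $c_1\preceq x$ for $x\in Y_2$ forces an $R_5[110]$ vertex, and the resulting core is all of $G$.
\item $R_1[11]$ is realized. Then $c_1\preceq r$ forces a $Y_3[100]$ or $Y_4[100]$ vertex, and again the core is all of $G$.
\item Neither holds. Then elimination leaves nothing.
\end{itemize}
Each case ends with an explicit $4$-coloring with a singleton class, to which \cref{cor:mainrecolor} applies.

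Without this step and this case analysis, or an equivalent explicit argument, your proposal does not establish the claim. The suggested extra normalization of $C$ (minimizing $|Y_1|$) is undeveloped and not needed.
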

 
\begin{proofc}
Suppose that $a\in Y_1$. By Claim~\ref{claim:overlap}, $F_2=F_5=\varnothing$, so the only nonempty $F$-set is $F_1=\{f\}$. Define the core $K=(C;f,a)$. For a profile $T[\varepsilon_1\varepsilon_2]$, we first record the values of $\varepsilon_1$ and $\varepsilon_2$ that are forced by \cref{lem:basic}. Since $f\in F_1$ and $a\in Y_1$, the relevant items are \ref{it:indep}, \ref{it:FY}, \ref{it:YY}, \ref{it:RR}, and \ref{it:FR}.

\begin{itemize}[itemsep=1pt,topsep=2pt]
\item Every vertex of $Y_1$ has $\varepsilon_1=\varepsilon_2=0$: we have $\varepsilon_1=0$ because $F_1$ is anticomplete to $Y_1$ by \ref{it:FY}, and $\varepsilon_2=0$ because $Y_1$ is independent by \ref{it:indep}. Thus, the only $Y_1$-profile is $Y_1[00]$.

\item Every vertex of $Y_2\cup Y_5$ has $\varepsilon_1=0$ and $\varepsilon_2=1$: we have $\varepsilon_1=0$ because $F_1$ is anticomplete to $Y_5\cup Y_1\cup Y_2$ by \ref{it:FY}, and $\varepsilon_2=1$ because $Y_5$ is complete to $Y_1$ and $Y_1$ is complete to $Y_2$ by \ref{it:YY}. Thus, the only profiles are $Y_2[01]$ and $Y_5[01]$.

\item Every vertex of $Y_3\cup Y_4$ has $\varepsilon_1=1$, since $F_1$ is complete to $Y_3\cup Y_4$ by \ref{it:FY}. Thus, the possible profiles are $Y_3[10]$, $Y_3[11]$, $Y_4[10]$, and $Y_4[11]$. The value of $\varepsilon_2$ is not fixed, although by \ref{it:YY+2}, the vertex $a$ is anticomplete to at least one of $Y_3$ and $Y_4$, so $Y_3[11]$ and $Y_4[11]$ cannot both occur.

\item Every vertex of $R_2\cup R_5$ has $\varepsilon_1=1$, since $F_1$ is complete to $R_5\cup R_2$ by \ref{it:FR}. Thus, the possible profiles are $R_2[10]$, $R_2[11]$, $R_5[10]$, and $R_5[11]$.

\item Every vertex of $R_1$ has $\varepsilon_2=1$, since $Y_1$ is complete to $R_1$ by \ref{it:RR}. Thus, the possible profiles are $R_1[01]$ and $R_1[11]$.

\item Neither $\varepsilon_1$ nor $\varepsilon_2$ is forced by \cref{lem:basic} for vertices in $R_3$, $R_4$, or $Z$. However, some profiles are not admissible. Indeed, a vertex $v$ realizing $R_3[10]$, $R_4[10]$, or $Z[10]$ together with the edges $vf$ and $ac_1$ gives an induced $2K_2$; a vertex $v$ realizing $R_3[01]$ together with the edges $va$ and $fc_5$ gives an induced $2K_2$; and a vertex $v$ realizing $R_4[01]$ or $Z[01]$ together with the edges $va$ and $fc_2$ gives an induced $2K_2$. Thus, the only admissible profiles of these types are $R_3[00]$, $R_3[11]$, $R_4[00]$, $R_4[11]$, $Z[00]$, and $Z[11]$.
\end{itemize}

The remaining types contribute no vertices outside the core as $U=\varnothing$ by \cref{claim:U}, $F_1=\{f\}$, $F_2=F_5=\varnothing$ by \cref{claim:overlap}, and $F_3=F_4=\varnothing$ by \ref{it:FF}. We next show a pivotal structural observation used to finish the claim.

\medskip 
 
\textbf{The vertex $a$ is anticomplete to $Y_3\cup Y_4$.} Suppose not. Without loss of generality, suppose that $y\in Y_3\cap N(a)$, whose profile relative to $(C;f,a)$ is $Y_3[11]$. Extend the core to $K'=(C;f,a,y)$. We have $c_2\preceq_{K'}y$ and $c_4\preceq_{K'}y$, since
\[N_{K'}(c_2)=\{c_1,c_3,f\},\qquad N_{K'}(c_4)=\{c_3,c_5,f,a\},\qquad N_{K'}(y)=\{c_1,c_3,c_5,f,a\}.\]
Hence, there are vertices $p$ and $q$ where $p$ realizes an admissible profile containing $c_2$ but not $y$, while $q$ realizes an admissible profile containing $c_4$ but not $y$. We first determine the possible profiles of $p$ and $q$.

\emph{The profile of $p$ is $Y_5[010]$, $R_1[010]$, or $R_1[110]$.}
Since $p$ is adjacent to $c_2$, its type must be one of $Y_2,Y_4,Y_5,R_1$, or $R_3$, as all other types adjacent to $c_2$ are empty or already contained in the core. Since $p$ must be nonadjacent to $y\in Y_3$, the types $Y_2$, $Y_4$, and $R_3$ are impossible by \cref{lem:basic}\ref{it:YY}, \ref{it:RR}. If $p\in Y_5$, then $p$ is nonadjacent to $f$, adjacent to $a$, and nonadjacent to $y$, giving the profile $Y_5[010]$. If $p\in R_1$, then $p$ is adjacent to $a$ and nonadjacent to $y$, while its adjacency to $f$ is not fixed, giving the profiles $R_1[010]$ and $R_1[110]$.

\emph{The profile of $q$ is $Y_1[000]$, $R_5[100]$, or $R_5[110]$.}
Since $q$ is adjacent to $c_4$, its type must be one of $Y_1,Y_2,Y_4,R_3$, or $R_5$, as all other types adjacent to $c_4$ are empty or already contained in the core. Since $q$ must be nonadjacent to $y\in Y_3$, the types $Y_2$, $Y_4$, and $R_3$ are impossible by \cref{lem:basic}\ref{it:YY}, \ref{it:RR}. If $q\in Y_1$, then $q$ is nonadjacent to both $f$ and $a$, and by the rescuer condition it is nonadjacent to $y$, giving the profile $Y_1[000]$. If $q\in R_5$, then $q$ is adjacent to $f$ and nonadjacent to $y$, while its adjacency to $a$ is not fixed, giving the profiles $R_5[100]$ and $R_5[110]$.

From the possible profiles above, $p$ is nonadjacent to $c_4$ and $q$ is nonadjacent to $c_2$, so in particular $p\neq q$. Moreover, $pq\in E(G)$, since otherwise the edges $pc_2$ and $qc_4$ induce a $2K_2$. We now eliminate all but two possible pairs. If $q$ realizes $Y_1[000]$ and $p$ realizes either $Y_5[010]$ or $R_1[010]$, then the edges $pq$ and $fy$ induce a $2K_2$. If $p$ realizes either $R_1[010]$ or $R_1[110]$ and $q$ realizes either $R_5[100]$ or $R_5[110]$, then the edges $pq$ and $c_3y$ induce a $2K_2$. Finally, if $p$ realizes $Y_5[010]$ and $q$ realizes $R_5[110]$, then $(c_1,c_2,f,c_4,a,c_3,c_5,q,y,p)$ is an induced copy of $B_{10}$ in its defining order, as recorded in row~\hyperlink{obs:B14}{B.14} of Appendix~\ref{app:obstruction-witnesses}. Thus, the only pairs remaining are
\[(p,q)\in\{(Y_5[010],R_5[100]),\,(R_1[110],Y_1[000])\}.\]

Suppose first that $p$ realizes $Y_5[010]$ and $q$ realizes $R_5[100]$. Relative to the core $(C;f,a,y,p)$, the vertex $q$ realizes $R_5[1001]$. Extend the core to $K''=(C;f,a,y,p,q)$. Then 
\[N_{K''}(q)=\{c_1,c_4,f,p\}\subseteq\{c_1,c_4,f,y,p\}=N_{K''}(c_5),\]
so $q\preceq_{K''}c_5$. By Appendix~\ref{app:A7}, the only admissible profile containing $q$ but not $c_5$ is $Y_1[00111]$, which is excluded by the $B_{10}$-obstruction in row~\hyperlink{obs:B15}{B.15}. Hence, no vertex of $G-K''$ can be adjacent to $q$ and nonadjacent to $c_5$, contradicting reducedness.

It remains that $p$ realizes $R_1[110]$ and $q$ realizes $Y_1[000]$. Relative to the core $(C;f,a,y,p)$, the vertex $q$ realizes $Y_1[0001]$. Extend the core to $K'''=(C;f,a,y,p,q)$. Then
\[N_{K'''}(q)=\{c_1,c_3,c_4,p\}\subseteq\{c_1,c_3,c_4,y,p\}=N_{K'''}(a),\]
so $q\preceq_{K'''}a$. By Appendix~\ref{app:A8}, the only admissible profiles containing $q$ but not $a$ are $R_5[10111]$ and $Y_4[10101]$, which are excluded by the obstructions in rows~\hyperlink{obs:B17}{B.17} and~\hyperlink{obs:B18}{B.18}, respectively. Hence, no vertex of $G-K'''$ can be adjacent to $q$ and nonadjacent to $a$, again contradicting reducedness.

Therefore, $a$ is anticomplete to $Y_3\cup Y_4$.
    
\medskip
 
\textbf{Completing the claim}. Together with the adjacencies and nonadjacencies recorded at the start of the proof, this determines the profile relative to $K=(C;f,a)$ of every vertex outside the core. The possible profiles are
\[
\renewcommand{\arraystretch}{1.15}
\begin{array}{c@{\quad}c@{\quad}c@{\quad}c@{\quad}c@{\quad}c@{\quad}c@{\quad}c@{\quad}c}
Y_1[00], & Y_2[01], & Y_3[10], & Y_4[10], & Y_5[01], & R_1[01], & R_1[11], & R_2[10], & R_2[11],\\
R_3[00], & R_3[11], & R_4[00], & R_4[11], & R_5[10], & R_5[11], & Z[00], & Z[11]. &
\end{array}
\]

\textbf{Case 1: $Y_2=Y_5=\varnothing$ and $R_1[11]$ is not realized.}
For the core $K=(C;f,a)$, Appendix~\ref{app:B9} leaves no surviving profile, so $V(G)=V(K)$. Then $G$ has the following $4$-coloring:
\[I_1=\{c_1\},\qquad I_2=\{c_3,c_5\},\qquad I_3=\{c_2,c_4\},\qquad I_4=\{f,a\}.\]
Thus, $G-c_1$ is $3$-colorable. For any $5$-coloring of $G$, let $S$ be the color class containing $c_1$. Since $G-S$ is an induced subgraph of $G-c_1$, we have $\chi(G-S)\leq 3$. Therefore, Corollary~\ref{cor:mainrecolor} applies with $I=S$, a contradiction.

\textbf{Case 2: $Y_2\cup Y_5\ne\varnothing$.}
Without loss of generality, choose $x\in Y_2$. Then $x$ realizes the profile $Y_2[01]$. For the core $K=(C;f,a,x)$, we have $c_1\preceq_K x$, because
\[N_K(c_1)=\{c_2,c_5,a\}\subseteq\{c_2,c_4,c_5,a\}=N_K(x).\]
By Appendix~\ref{app:B10}, the only surviving profile containing $c_1$ but not $x$ is $R_5[110]$. Hence, reducedness forces a vertex $q'$ realizing $R_5[110]$. For the core $K'=(C;f,a,x,q')$, Appendix~\ref{app:B11} leaves no surviving profile, so $V(G)=V(K')$. Then $G$ has the following $4$-coloring:
\[I_1=\{c_4\},\qquad I_2=\{f,a\},\qquad I_3=\{c_1,c_3,x\},\qquad I_4=\{c_2,c_5,q'\}.\]
Thus, $G-c_4$ is $3$-colorable. For any $5$-coloring of $G$, let $S$ be the color class containing $c_4$. Since $G-S$ is an induced subgraph of $G-c_4$, we have $\chi(G-S)\leq 3$. Therefore, Corollary~\ref{cor:mainrecolor} applies with $I=S$, a contradiction.

\textbf{Case 3: $Y_2=Y_5=\varnothing$ and $R_1[11]$ is realized.}
Let $r$ realize $R_1[11]$. For the core $K=(C;f,a,r)$, we have $c_1\preceq_K r$, because
\[N_K(c_1)=\{c_2,c_5,a\}\subseteq\{c_2,c_5,f,a\}=N_K(r).\]
By Appendix~\ref{app:B12}, the only surviving profiles containing $c_1$ but not $r$ are $Y_3[100]$ and $Y_4[100]$. Hence, reducedness forces a vertex realizing one of these profiles. By symmetry, let $y$ realize $Y_3[100]$. For the core $K'=(C;f,a,r,y)$, Appendix~\ref{app:B13} leaves no surviving profile, so $V(G)=V(K')$. Then $G$ has the following $4$-coloring:
\[I_1=\{c_3\},\qquad I_2=\{c_1,f\},\qquad I_3=\{c_2,c_5,a\},\qquad I_4=\{c_4,r,y\}.\]
Thus, $G-c_3$ is $3$-colorable. For any $5$-coloring of $G$, let $S$ be the color class containing $c_3$. Since $G-S$ is an induced subgraph of $G-c_3$, we have $\chi(G-S)\leq 3$. Therefore, Corollary~\ref{cor:mainrecolor} applies with $I=S$, a contradiction.

This exhausts all possibilities, and therefore $Y_1=\varnothing$.
\end{proofc}

 We may suppose that $Y_1$ is empty.

\begin{claim}\label{claim:F25}
    $F_2\cup F_5$ is empty.
\end{claim}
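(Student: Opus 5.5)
We argue by contradiction, following the pattern of the proofs of \cref{claim:overlap} and \cref{claim:Y1}. Suppose $F_2\cup F_5\neq\varnothing$; by the reflection of $C$ fixing $c_1$ (which swaps $F_2$ and $F_5$ and preserves $F_1$), we may assume there is $g\in F_2$. Then $F_4=\varnothing$ and $F_3=\varnothing$ by \cref{lem:basic}\ref{it:FF} and \cref{claim:min_rim}. The sets $F_5$ and $F_2$ may both be nonempty, since they are not at distance two. We also have $U=\varnothing$ by \cref{claim:U}, $Y_1=\varnothing$ by \cref{claim:Y1}, and $F_1=\{f\}$. The minimality of $(C,1)$ also lets us run the argument of \cref{claim:min_rim}(ii) with $F_2$ in place of $F_1$: any cycle switch that produces a smaller $F_2$-set would contradict the choice. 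We would first try to show that $|F_2|=1$, or at least that $f$ and $g$ have rigid neighborhoods. Note that $fg\notin E(G)$, and \cref{lem:basic}\ref{it:FY} forces adjacency patterns: $f$ is complete to $Y_3\cup Y_4$, $g$ is complete to $Y_4\cup Y_5$, and both are anticomplete to the remaining $Y$-sets.

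Next we form the core $K=(C;f,g)$ and list its admissible profiles under these constraints, using \cref{lem:attachment-rules}, \cref{lem:basic}, and the computer certificates. The key comparison is that $N_K(f)=\{c_2,c_3,c_4,c_5\}$ and $N_K(g)=\{c_1,c_3,c_4,c_5\}$ share $\{c_3,c_4,c_5\}$. In particular, $c_4$ and $c_3$ satisfy $c_4\preceq_K$-type containments: $N_K(c_4)=\{c_3,c_5,f,g\}$. Reducedness then forces rescuers, namely vertices adjacent to $c_4$ but not to $f$ or $g$, or distinguishing pairs. We then proceed as in \cref{claim:Y1}. Once a forced rescuer such as a $Y_4$-vertex or an $R_3$/$R_5$-vertex is identified, we extend the core by it and rerun the elimination rounds. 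Along the way we discard profiles that create induced $B_{10}$ or $B_{11}$, since $G$ is free of both. Note that $B_{10}$ and $B_{11}$ both contain exactly the configuration $f\in F_1$, $g\in F_2$, so extensions by $Y_3$, $R_1$, $Y_2$, $Y_5$ or $R_2$ vertices adjacent in the $B_{10}$/$B_{11}$ patterns are excluded immediately. We expect the branching to be on whether $F_5$ is nonempty, and then on which of $Y_3,Y_4,Y_5$ is nonempty.

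In each terminal branch, the elimination rounds should leave at most one or two mutually incompatible surviving profiles. As in \cref{claim:U}, a two-by-two table of $2K_2$/$K_4$ obstructions would then show that $|V(G)\setminus V(K)|\le 1$, using reducedness to rule out two vertices with the same profile. We would then exhibit an explicit $4$-coloring with a singleton class, for instance $\{c_1\}$ or $\{c_4\}$, together with classes such as $\{f,g\}\cup\cdots$, $\{c_2,c_4,\ldots\}$, $\{c_3,c_5,\ldots\}$. The graph minus that vertex is then $3$-colorable, so \cref{cor:mainrecolor} applies to the color class containing it, giving the contradiction.

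The main obstacle is the first branching step: determining which rescuer is forced for the comparison involving $c_4$ (or $c_3$), and showing that every other rescuer yields a $B_{10}$ or $B_{11}$. This is where the hypotheses $Y_1=\varnothing$ and $U=\varnothing$ must be used in an essential way. If the rescuer list is too long, we would first try to exploit the minimality of $|F_2|$ via the cycle switch $c_3\to r$ for $r\in R_3$, as in \cref{claim:min_rim}(ii), to cut it down.
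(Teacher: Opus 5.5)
There is a genuine gap, and it sits exactly at the step you identify as the main obstacle: the ``key comparison'' in $K=(C;f,g)$ does not exist. In $K$ we have $N_K(c_1)=\{c_2,c_5,g\}$, $N_K(c_2)=\{c_1,c_3,f\}$, $N_K(c_3)=\{c_2,c_4,f,g\}$, $N_K(c_4)=\{c_3,c_5,f,g\}$, $N_K(c_5)=\{c_1,c_4,f,g\}$, $N_K(f)=\{c_2,c_3,c_4,c_5\}$ and $N_K(g)=\{c_1,c_3,c_4,c_5\}$. Checking the nonadjacent pairs shows that $K$ contains no comparable pair at all; in particular $c_4$ is comparable to neither of its non-neighbours $c_1$ and $c_2$. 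So reducedness forces no rescuer relative to $K$, and your branching has nothing to start from.

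Several of your preliminary assertions are also wrong:
\begin{itemize}
\item $F_2$ and $F_5$ \emph{are} at cyclic distance two, since $2-2\equiv 5\pmod 5$. Hence \cref{lem:basic}\ref{it:FF} gives $F_5=\varnothing$, and your planned split on $F_5$ is vacuous.
\item The minimality of $(C,1)$ gives no leverage on $|F_2|$. Since $U=\varnothing$ and $|F_1|=1$, no cycle switch can beat the chosen pair. The paper simply carries extra $F_2$-vertices as the profile $F_2[00]$.
\item $B_{10}$ and $B_{11}$ in their defining labelings use a vertex of $Y_1$, which is now empty. So exclusions via $B_{10}$ or $B_{11}$ are not ``immediate''; they require other embeddings.
\end{itemize}

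The missing idea is how to gain a foothold in a core that is already reduced. The paper first proves that at least one of $R_1[11]$, $R_2[11]$, $Y_3[10]$, $Y_4[11]$, $Y_5[01]$ is realized. If none is, the elimination rounds for $(C;f,g)$ (Certificate A.14) leave only $R_3[01]$ and $R_5[10]$. These are complete to each other and, by reducedness, each is realized at most once. An explicit $4$-coloring with singleton class $\{c_1\}$ or $\{c_4\}$ then finishes via \cref{cor:mainrecolor}.

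Only after adjoining one of these five vertices do comparisons appear:
\begin{itemize}
\item $r\in R_1[11]$ gives $c_1\preceq r$ and $y\in Y_3[10]$ gives $c_2\preceq y$. Each forces the other as its unique rescuer, and $(C;f,g,r,y)$ then has no survivors.
\item The pair $R_2[11]$, $Y_5[01]$ is handled symmetrically.
\item If only $Y_4[11]$ occurs, say by $y'$, the comparisons $c_3\preceq y'$ and $c_5\preceq y'$ force rescuers of profile $R_2[100]$ and of profile $Y_2[000]$ or $R_1[010]$. These cannot coexist, whether adjacent or not.
\end{itemize}
Your sketch contains nothing playing the role of this dichotomy. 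Beyond the first step, it describes the general method rather than giving an argument.
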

 
\begin{proofc}
    Suppose not. By \cref{lem:basic}\ref{it:FF} at least one of $F_5$ and $F_2$ is empty. Without loss of generality, let $g\in F_2$. Note $f$ and $g$ are nonadjacent by \cref{lem:basic}\ref{it:FF}. Define the core $K=(C;f,g)$. For a profile $T[\varepsilon_1\varepsilon_2]$, we first record the values of $\varepsilon_1$ and $\varepsilon_2$ that are forced by \cref{lem:basic} and prior claims.
    
\begin{itemize}[itemsep=0pt,topsep=2pt]

    \item The possible $Y$-profiles are $Y_2[00]$, $Y_3[10]$, $Y_4[11]$, and $Y_5[01]$. Indeed, $Y_1=\varnothing$ by \cref{claim:Y1}, while $F_1$ is complete to $Y_3\cup Y_4$ and anticomplete to $Y_2\cup Y_5$, and $F_2$ is complete to $Y_4\cup Y_5$ and anticomplete to $Y_2\cup Y_3$. Thus every vertex of $Y_2$, $Y_3$, $Y_4$, and $Y_5$ realizes the corresponding profile above.

    \item For four of the five $R$-sets, one of the two adjacencies to $f$ and $g$ is fixed. Since $F_2$ is complete to $R_1\cup R_3$, the possible profiles are $R_1[01]$, $R_1[11]$, $R_3[01]$, and $R_3[11]$. Since $F_1$ is complete to $R_2\cup R_5$, the possible profiles are $R_2[10]$, $R_2[11]$, $R_5[10]$, and $R_5[11]$.

    \item For vertices in $R_4$ or $Z$, neither adjacency to $f$ nor adjacency to $g$ is forced by \cref{lem:basic}. However, the mixed profiles are not admissible. Indeed, if a vertex $v$ realizes $R_4[10]$ or $Z[10]$, then the edges $vf$ and $gc_1$ induce a $2K_2$, while if $v$ realizes $R_4[01]$ or $Z[01]$, then the edges $vg$ and $fc_2$ induce a $2K_2$. Thus, the only admissible profiles of these types are $R_4[00]$, $R_4[11]$, $Z[00]$, and $Z[11]$.

    \item The only remaining $F$-profile is $F_2[00]$. Indeed, $F_1=\{f\}$ and $F_3=F_4=F_5=\varnothing$, while every other vertex of $F_2$ is nonadjacent to both $f$ and $g$ by \cref{lem:basic}\ref{it:indep}, \ref{it:FF}.
\end{itemize}
We now establish a structural observation that will complete the claim.
    
\medskip

\textbf{At least one profile in $\{R_1[11],R_2[11],Y_3[10],Y_4[11],Y_5[01]\}$ is realized in $G$.}
Suppose not. Since every vertex of $Y_3$, $Y_4$, $Y_5$, $R_1\cap N(f)$, and $R_2\cap N(g)$ realizes the corresponding profile in this set, all five sets are empty.

By Appendix~\ref{app:B14}, the only surviving profiles are $R_3[01]$ and $R_5[10]$. Any vertex realizing $R_3[01]$ is adjacent to every vertex realizing $R_5[10]$: otherwise, if $v$ realizes $R_3[01]$ and $w$ realizes $R_5[10]$ with $vw\notin E(G)$, then the edges $vg$ and $wf$ induce a $2K_2$.

Moreover, each of these two profiles is realized by at most one vertex. Indeed, if two distinct vertices realize the same profile, then they lie in the same independent class by \cref{lem:basic}\ref{it:indep}, have the same neighborhood in the core, and are both adjacent to every vertex realizing the other surviving profile. Since no other profile is realized, they have the same neighborhood in $G$, contradicting reducedness.

Thus either $|V(G)\setminus V(K)|\leq 1$, or $V(G)\setminus V(K)=\{r_3,r_5\}$, where $r_3$ realizes $R_3[01]$ and $r_5$ realizes $R_5[10]$. If $|V(G)\setminus V(K)|\leq 1$, then $G$ has the $4$-coloring
\[
I_1=\{c_1\},\qquad
I_2=\{c_3,c_5\}\cup (V(G)\setminus V(K)),\qquad
I_3=\{c_2,c_4\},\qquad
I_4=\{f,g\}.
\]
Thus, $G-c_1$ is $3$-colorable. For any $5$-coloring of $G$, let $S$ be the color class containing $c_1$. Since $G-S$ is an induced subgraph of $G-c_1$, we have $\chi(G-S)\leq 3$. Therefore, Corollary~\ref{cor:mainrecolor} applies with $I=S$, a contradiction.

If $V(G)\setminus V(K)=\{r_3,r_5\}$, then $G$ has the $4$-coloring
\[
I_1=\{c_4\},\qquad
I_2=\{c_1,f,r_3\},\qquad
I_3=\{c_2,g,r_5\},\qquad
I_4=\{c_3,c_5\}.
\]
Thus, $G-c_4$ is $3$-colorable. For any $5$-coloring of $G$, let $S$ be the color class containing $c_4$. Since $G-S$ is an induced subgraph of $G-c_4$, we have $\chi(G-S)\leq 3$. Therefore, Corollary~\ref{cor:mainrecolor} applies with $I=S$, a contradiction.

\medskip

\textbf{Completing the claim.}
By the previous paragraph, at least one of
\[R_1[11], R_2[11], Y_3[10], Y_4[11], Y_5[01]\]
is realized. We rule out these possibilities in three cases.

\textbf{Case 1: $R_1[11]$ or $Y_3[10]$ is realized.}
We first observe that either realization forces the other. Suppose that $r$ realizes $R_1[11]$. Extend the core to $K'=(C;f,g,r)$. We have $c_1\preceq_{K'}r$, since
\[
N_{K'}(c_1)=\{c_2,c_5,g\}\subseteq\{c_2,c_5,f,g\}=N_{K'}(r).
\]
By Appendix~\ref{app:B15}, the only surviving profile containing $c_1$ but not $r$ is $Y_3[100]$. Hence, reducedness forces a vertex $y$ realizing $Y_3[100]$ relative to $K'$. In particular, $y\in Y_3$ and $ry\notin E(G)$.

Conversely, suppose that $y$ realizes $Y_3[10]$. Extend the core to $K'=(C;f,g,y)$. We have $c_2\preceq_{K'}y$, since
\[
N_{K'}(c_2)=\{c_1,c_3,f\}\subseteq\{c_1,c_3,c_5,f\}=N_{K'}(y).
\]
By Appendix~\ref{app:B16}, the only surviving profile containing $c_2$ but not $y$ is $R_1[110]$. Hence, reducedness forces a vertex $r$ realizing $R_1[110]$ relative to $K'$. In particular, $r$ realizes $R_1[11]$ relative to $(C;f,g)$ and $ry\notin E(G)$.

Thus, in either case, $G$ contains the core $K''=(C;f,g,r,y)$ with $r$ realizing $R_1[11]$ and $y$ realizing $Y_3[100]$. By Appendix~\ref{app:B17}, no surviving profile remains, so $V(G)=V(K'')$. Then $G$ has the $4$-coloring
\[
I_1=\{c_5\},\qquad I_2=\{f,g\},\qquad I_3=\{c_1,c_3,r\},\qquad I_4=\{c_2,c_4,y\}.
\]
Thus, $G-c_5$ is $3$-colorable. For any $5$-coloring of $G$, let $S$ be the color class containing $c_5$. Since $G-S$ is an induced subgraph of $G-c_5$, we have $\chi(G-S)\leq 3$. Therefore, Corollary~\ref{cor:mainrecolor} applies with $I=S$, a contradiction.

\textbf{Case 2: $R_2[11]$ or $Y_5[01]$ is realized.}
Again, either realization forces the other. Suppose first that $b$ realizes $Y_5[01]$. Extend the core to $K'=(C;f,g,b)$. We have $c_1\preceq_{K'}b$, since
\[
N_{K'}(c_1)=\{c_2,c_5,g\}\subseteq\{c_2,c_3,c_5,g\}=N_{K'}(b).
\]
By Appendix~\ref{app:B18}, the only surviving profile containing $c_1$ but not $b$ is $R_2[110]$. Hence, reducedness forces a vertex $q$ realizing $R_2[110]$ relative to $K'$. In particular, $q$ realizes $R_2[11]$ relative to $(C;f,g)$ and $qb\notin E(G)$.

Conversely, suppose that $q$ realizes $R_2[11]$. Extend the core to $K'=(C;f,g,q)$. We have $c_2\preceq_{K'}q$, since
\[
N_{K'}(c_2)=\{c_1,c_3,f\}\subseteq\{c_1,c_3,f,g\}=N_{K'}(q).
\]
By Appendix~\ref{app:B19}, the only surviving profile containing $c_2$ but not $q$ is $Y_5[010]$. Hence, reducedness forces a vertex $b$ realizing $Y_5[010]$ relative to $K'$. In particular, $b$ realizes $Y_5[01]$ relative to $(C;f,g)$ and $qb\notin E(G)$.

Thus, in either case, $G$ contains the core $K''=(C;f,g,q,b)$ with $q$ realizing $R_2[11]$ and $b$ realizing $Y_5[010]$. By Appendix~\ref{app:B20}, no surviving profile remains, so $V(G)=V(K'')$. Then $G$ has the $4$-coloring
\[
I_1=\{c_3\},\qquad I_2=\{f,g\},\qquad I_3=\{c_2,c_5,q\},\qquad I_4=\{c_1,c_4,b\}.
\]
Thus, $G-c_3$ is $3$-colorable. For any $5$-coloring of $G$, let $S$ be the color class containing $c_3$. Since $G-S$ is an induced subgraph of $G-c_3$, we have $\chi(G-S)\leq 3$. Therefore, Corollary~\ref{cor:mainrecolor} applies with $I=S$, a contradiction.

\textbf{Case 3: $Y_4[11]$ is realized.}
By the previous two cases, $Y_3=Y_5=\emptyset$, and neither $R_1[11]$ nor $R_2[11]$ is realized. Let $y'\in Y_4$. Then $y'$ realizes $Y_4[11]$. Extend the core to $K'=(C;f,g,y')$. We have $c_3\preceq_{K'}y'$ and $c_5\preceq_{K'}y'$, since
\[
N_{K'}(c_3)=\{c_2,c_4,f,g\},\qquad
N_{K'}(c_5)=\{c_1,c_4,f,g\},\qquad
N_{K'}(y')=\{c_1,c_2,c_4,f,g\}.
\]
Hence, reducedness forces a vertex $q$ adjacent to $c_3$ and nonadjacent to $y'$, and a vertex $s$ adjacent to $c_5$ and nonadjacent to $y'$. We determine their possible profiles.

\emph{The profile of $q$ is $R_2[100]$.}
Since $q$ is adjacent to $c_3$, its type must be one of $F_2$, $R_2$, or $R_4$, as all other types adjacent to $c_3$ are empty or already contained in the core. A vertex of $F_2$ is adjacent to $y'$ by \cref{lem:basic}\ref{it:FY}, while every vertex of $R_4$ is adjacent to $y'$ by \cref{lem:basic}\ref{it:RR}. Since $q$ must be nonadjacent to $y'$, neither type is possible. Thus $q\in R_2$. Every vertex of $R_2$ is adjacent to $f$, while $q$ must be nonadjacent to $g$ because $R_2[11]$ is not realized. Together with $qy'\notin E(G)$, this gives the profile $R_2[100]$.

\emph{The profile of $s$ is $Y_2[000]$ or $R_1[010]$.}
Since $s$ is adjacent to $c_5$, its type must be one of $F_2$, $Y_2$, $R_1$, or $R_4$, as all other types adjacent to $c_5$ are empty or already contained in the core. Again, every vertex of $F_2$ or $R_4$ is adjacent to $y'$ by \cref{lem:basic}\ref{it:FY}, \ref{it:RR}, so neither type is possible. If $s\in Y_2$, then $s$ is nonadjacent to both $f$ and $g$, and also to $y'$, giving the profile $Y_2[000]$. If $s\in R_1$, then $s$ is adjacent to $g$, while it must be nonadjacent to $f$ because $R_1[11]$ is not realized. Together with $sy'\notin E(G)$, this gives the profile $R_1[010]$.

In particular, $q\neq s$. If $s$ realizes $Y_2[000]$ and $qs\notin E(G)$, then the edges $qc_1$ and $sc_4$ induce a $2K_2$; if $qs\in E(G)$, then the edges $qs$ and $gy'$ induce a $2K_2$. If instead $s$ realizes $R_1[010]$ and $qs\notin E(G)$, then the edges $qc_3$ and $sc_5$ induce a $2K_2$; if $qs\in E(G)$, then the edges $qs$ and $c_4y'$ induce a $2K_2$. Thus no possible pair of rescuers exists, contradicting reducedness.

All three cases are impossible, contradicting the fact that at least one of $R_1[11]$, $R_2[11]$, $Y_3[10]$, $Y_4[11]$, and $Y_5[01]$ is realized. Therefore, $F_2\cup F_5=\varnothing$.
\end{proofc}

\medskip

We can now finish the proof. 
At this point of the argument the following sets are empty:
\[
U=\varnothing \text{ by \cref{claim:U}},\qquad
Y_1=\varnothing \text{ by \cref{claim:Y1}},\qquad
F_2=F_5=\varnothing \text{ by \cref{claim:F25}},
\]
and $F_3=F_4=\varnothing$ by \cref{lem:basic}\ref{it:FF}, since $F_1\ne\varnothing$.
Hence
\[
V(G)=V(C)\cup\{f\}\cup Y_2\cup Y_3\cup Y_4\cup Y_5\cup R_1\cup R_2 \cup R_3 \cup R_4 \cup R_5\cup Z .
\]

The vertices $f$ and $c_1$ are nonadjacent, so reducedness gives a vertex $w\in N(c_1)\setminus N(f)$. The neighbors $c_2,c_5$ of $c_1$ are adjacent to $f$, so $w\notin V(C)$. Among the classes listed above, we have by definition
\[
N(c_1)=\{c_2,c_5\}\cup Y_3\cup Y_4\cup R_2\cup R_5 .
\]
But $f\in F_1$ is adjacent to $c_2$ and $c_5$, is complete to $Y_3\cup Y_4$ by \cref{lem:basic}\ref{it:FY}, and is complete to $R_2\cup R_5$ by \cref{lem:basic}\ref{it:FR}, so $N(c_1)\subseteq N(f)$. Thus $c_1\preceq f$ for the nonadjacent pair $c_1,f$, contradicting the reducedness of $G$. Therefore, no such $G$ exists. We have proved that $\mathcal R_5(G)$ is connected for every $4$-chromatic $(2K_2,K_4)$-free graph $G$. Consequently, for every $(2K_2,K_4)$-free graph $G$, the graph $\mathcal R_{\chi(G)+1}(G)$ is connected. Since the class of $(2K_2,K_4)$-free graphs is hereditary, \cref{lem:hereditary} now implies that $\mathcal R_\ell(G)$ is connected for every $\ell\geq\chi(G)+1$. Hence every $(2K_2,K_4)$-free graph is recolorable.

\end{proof}

\noindent \textbf{Declaration of AI use.}
ChatGPT Sol and Claude Opus were used to help write the code for checking admissible profiles and rescuers in the main proofs of the paper. They were also used for proofreading, presentation, and grammar. The mathematical content of this paper: the statements, proof strategy, and final arguments, is the authors' own. We take full responsibility for all content presented in this manuscript.


\appendix

\section{Complete attachment certificates}\label{app:finite-certificates}

In an elimination row involving a hypothetical new vertex $v$, neighborhood comparisons are taken in the enlarged graph $K+v$. Comparisons involving only vertices of the displayed core are taken in $K$. We suppress these subscripts when the ambient graph is clear. This appendix records the finite attachment certificates used in the proof of
\cref{thm:YES}. The certificates are listed in the order in which they are invoked.
For each core, we record its ordered off-cycle vertices, the additional branch assumptions used by the verifier, the admissible profiles compatible with those assumptions, any profiles excluded directly by a $B_{10}$- or $B_{11}$-obstruction, the elimination rounds when needed, the profiles remaining after these exclusions and elimination rounds, and any core comparison used subsequently in the argument. Bit strings are read in the displayed order of the off-cycle vertices.

All finite assertions recorded in these certificates are verified by the computer
program described in \cref{subsec:computer}. In particular, the program verifies
the admissible-profile lists under the stated branch assumptions, the local comparisons and elimination rounds, the final remaining-profile lists, the stated core comparisons, and every obstruction witness cited from \cref{app:obstruction-witnesses}.

Certificates A.1--A.4 are used in Claim~\ref{claim:U}, A.5--A.6 in
Claim~\ref{claim:overlap}, A.7--A.13 in Claim~\ref{claim:Y1}, and
A.14--A.20 in Claim~\ref{claim:F25}. Each additional assumption displayed below is established in the proof before the corresponding certificate is invoked. When the final remaining-profile set is empty, no admissible profile compatible with the branch assumptions can be realized outside the displayed core, and hence the displayed core contains all vertices of $G$.

\begin{certblock}{A.1}\phantomsection\label{app:A1}\label{app:B1}
\noindent\textit{Core:} $K=(C;\,f,\,u,\,y,\,y')$ with $f\in F_{1}$, $u\in U[0]$, $y\in Y_{3}[10]$, $y'\in Y_{4}[101]$.
\par\noindent\textit{Additional assumptions:} No vertex outside the core lies in $U$, $F_{1}$, or $Y_{1}$. Every vertex outside the core in $R_i$ for some $i\in[5]$, or in $Z$, is adjacent to $u$.
\par\smallskip
\begingroup\centering
\scriptsize\setlength{\tabcolsep}{2.2pt}\renewcommand{\arraystretch}{0.80}
\textit{Admissible profiles}\\[1pt]
\begin{tabular*}{0.98\textwidth}{@{\extracolsep{\fill}}c l | c l | c l@{}}
\toprule
cycle type & strings & cycle type & strings & cycle type & strings\\
\midrule
$Z$ & $[0111]$, $[1101]$, $[1110]$ & $F_{2}$ & $[0001]$ & $F_{5}$ & $[0010]$\\
$Y_{2}$ & $[0010]$ & $Y_{3}$ & $[1001]$ & $Y_{4}$ & $[1010]$\\
$Y_{5}$ & $[0001]$ & $R_{1}$ & $[0111]$ & $R_{2}$ & $[1101]$\\
$R_{3}$ & $[0111]$, $[1110]$ & $R_{4}$ & $[0111]$, $[1101]$ & $R_{5}$ & $[1110]$\\
\bottomrule
\end{tabular*}
\par\endgroup
\smallskip
\begingroup\centering
\scriptsize\setlength{\tabcolsep}{2.2pt}\renewcommand{\arraystretch}{0.76}
\textit{Elimination rounds}\\[1pt]
\begin{tabular*}{0.98\textwidth}{@{\extracolsep{\fill}}c l l | c l l@{}}
\toprule
round & profile & comparison & round & profile & comparison\\
\midrule
1 & $Z[0111]$ & $v\preceq c_{1}$ & 1 & $Z[1101]$ & $v\preceq c_{4}$\\
1 & $Z[1110]$ & $v\preceq c_{3}$ & 1 & $Y_{2}[0010]$ & $v\preceq f$\\
1 & $Y_{3}[1001]$ & $y\preceq v$ & 1 & $Y_{4}[1010]$ & $y'\preceq v$\\
1 & $Y_{5}[0001]$ & $v\preceq f$ & 1 & $R_{1}[0111]$ & $c_{1}\preceq v$\\
1 & $R_{2}[1101]$ & $v\preceq c_{2}$ & 1 & $R_{3}[1110]$ & $c_{3}\preceq v$\\
1 & $R_{4}[1101]$ & $c_{4}\preceq v$ & 1 & $R_{5}[1110]$ & $v\preceq c_{5}$\\
\bottomrule
\end{tabular*}
\par\endgroup
\smallskip\noindent\textit{Survivors:} $F_{5}[0010]$, $F_{2}[0001]$, $R_{3}[0111]$, $R_{4}[0111]$.
\end{certblock}

\begin{certblock}{A.2}\phantomsection\label{app:A2}\label{app:B2}
\noindent\textit{Core:} $K=(C;\,f,\,u,\,y)$ with $f\in F_{1}$, $u\in U[0]$, $y\in Y_{3}[10]$.
\par\noindent\textit{Additional assumptions:} No vertex outside the core lies in $U$, $F_{1}$, $Y_{1}$, or $Y_{4}$. Every vertex outside the core in $R_i$ for some $i\in[5]$, or in $Z$, is adjacent to $u$.
\par\smallskip
\begingroup\centering
\scriptsize\setlength{\tabcolsep}{2.2pt}\renewcommand{\arraystretch}{0.80}
\textit{Admissible profiles}\\[1pt]
\begin{tabular*}{0.98\textwidth}{@{\extracolsep{\fill}}c l | c l | c l@{}}
\toprule
cycle type & strings & cycle type & strings & cycle type & strings\\
\midrule
$Z$ & $[011]$, $[110]$, $[111]$ & $F_{2}$ & $[000]$ & $F_{5}$ & $[001]$\\
$Y_{2}$ & $[001]$ & $Y_{3}$ & $[100]$ & $Y_{5}$ & $[000]$\\
$R_{1}$ & $[011]$, $[110]$ & $R_{2}$ & $[110]$ & $R_{3}$ & $[011]$, $[111]$\\
$R_{4}$ & $[011]$, $[110]$ & $R_{5}$ & $[110]$, $[111]$ &  & \\
\bottomrule
\end{tabular*}
\par\endgroup
\smallskip
\begingroup\centering
\scriptsize\setlength{\tabcolsep}{2.2pt}\renewcommand{\arraystretch}{0.76}
\textit{Elimination rounds}\\[1pt]
\begin{tabular*}{0.98\textwidth}{@{\extracolsep{\fill}}c l l | c l l@{}}
\toprule
round & profile & comparison & round & profile & comparison\\
\midrule
1 & $Z[011]$ & $v\preceq c_{1}$ & 1 & $F_{2}[000]$ & $v\preceq u$\\
1 & $Y_{5}[000]$ & $v\preceq u$ & 1 & $R_{1}[011]$ & $c_{1}\preceq v$\\
1 & $R_{3}[011]$ & $v\preceq c_{3}$ & 1 & $R_{4}[110]$ & $c_{4}\preceq v$\\
1 & $R_{5}[111]$ & $v\preceq c_{5}$ & 2 & $R_{2}[110]$ & $c_{2}\preceq v$\\
2 & $R_{3}[111]$ & $c_{3}\preceq v$ &  &  & \\
\bottomrule
\end{tabular*}
\par\endgroup
\smallskip\noindent\textit{Survivors:} $Y_{3}[100]$, $Z[110]$, $R_{5}[110]$, $R_{1}[110]$, $F_{5}[001]$, $Y_{2}[001]$, $R_{4}[011]$, $Z[111]$.
\end{certblock}

\begin{certblock}{A.3}\phantomsection\label{app:A3}\label{app:B3}
\noindent\textit{Core:} $K'=(C;\,f,\,u,\,y,\,r)$ with $f\in F_{1}$, $u\in U[0]$, $y\in Y_{3}[10]$, $r\in R_{1}[110]$.
\par\noindent\textit{Additional assumptions:} No vertex outside the core lies in $U$, $F_{1}$, $Y_{1}$, or $Y_{4}$. Every vertex outside the core in $R_i$ for some $i\in[5]$, or in $Z$, is adjacent to $u$.
\par\smallskip
\begingroup\centering
\scriptsize\setlength{\tabcolsep}{2.2pt}\renewcommand{\arraystretch}{0.80}
\textit{Admissible profiles}\\[1pt]
\begin{tabular*}{0.98\textwidth}{@{\extracolsep{\fill}}c l | c l | c l@{}}
\toprule
cycle type & strings & cycle type & strings & cycle type & strings\\
\midrule
$Z$ & $[1100]$ & $F_{2}$ & $[0001]$ & $F_{5}$ & $[0011]$\\
$Y_{2}$ & $[0011]$ & $Y_{3}$ & $[1000]$ & $Y_{5}$ & $[0000]$\\
$R_{1}$ & $[0110]$, $[1100]$ & $R_{2}$ & $[1101]$ & $R_{3}$ & $[0110]$, $[1110]$\\
$R_{4}$ & $[1100]$ & $R_{5}$ & $[1111]$ &  & \\
\bottomrule
\end{tabular*}
\par\endgroup
\smallskip
\begingroup\centering
\scriptsize\setlength{\tabcolsep}{2.2pt}\renewcommand{\arraystretch}{0.76}
\textit{Elimination rounds}\\[1pt]
\begin{tabular*}{0.98\textwidth}{@{\extracolsep{\fill}}c l l | c l l@{}}
\toprule
round & profile & comparison & round & profile & comparison\\
\midrule
1 & $Z[1100]$ & $v\preceq c_{4}$ & 1 & $F_{2}[0001]$ & $v\preceq u$\\
1 & $Y_{2}[0011]$ & $v\preceq f$ & 1 & $Y_{3}[1000]$ & $y\preceq v$\\
1 & $Y_{5}[0000]$ & $v\preceq f$ & 1 & $R_{1}[0110]$ & $c_{1}\preceq v$\\
1 & $R_{1}[1100]$ & $r\preceq v$ & 1 & $R_{3}[0110]$ & $v\preceq c_{3}$\\
1 & $R_{4}[1100]$ & $c_{4}\preceq v$ & 1 & $R_{5}[1111]$ & $v\preceq c_{5}$\\
2 & $R_{2}[1101]$ & $c_{2}\preceq v$ & 2 & $R_{3}[1110]$ & $c_{3}\preceq v$\\
\bottomrule
\end{tabular*}
\par\endgroup
\smallskip\noindent\textit{Survivors:} $F_{5}[0011]$.
\end{certblock}

\begin{certblock}{A.4}\phantomsection\label{app:A4}\label{app:B4}
\noindent\textit{Core:} $K''=(C;\,f,\,u,\,y,\,g')$ with $f\in F_{1}$, $u\in U[0]$, $y\in Y_{3}[10]$, $g'\in F_{5}[001]$.
\par\noindent\textit{Additional assumptions:} No vertex outside the core lies in $U$, $F_{1}$, $Y_{1}$, $Y_{4}$, or $R_{1}$. Every vertex outside the core in $R_i$ for some $i\in[5]$, or in $Z$, is adjacent to $u$.
\par\smallskip
\begingroup\centering
\scriptsize\setlength{\tabcolsep}{2.2pt}\renewcommand{\arraystretch}{0.80}
\textit{Admissible profiles}\\[1pt]
\begin{tabular*}{0.98\textwidth}{@{\extracolsep{\fill}}c l | c l | c l@{}}
\toprule
cycle type & strings & cycle type & strings & cycle type & strings\\
\midrule
$Z$ & $[0110]$, $[1101]$, $[1111]$ & $F_{5}$ & $[0010]$ & $Y_{2}$ & $[0011]$\\
$Y_{3}$ & $[1001]$ & $Y_{5}$ & $[0000]$ & $R_{2}$ & $[1101]$\\
$R_{3}$ & $[0110]$, $[1111]$ & $R_{4}$ & $[1101]$ & $R_{5}$ & $[1101]$, $[1110]$\\
\bottomrule
\end{tabular*}
\par\endgroup
\smallskip
\begingroup\centering
\scriptsize\setlength{\tabcolsep}{2.2pt}\renewcommand{\arraystretch}{0.76}
\textit{Elimination rounds}\\[1pt]
\begin{tabular*}{0.98\textwidth}{@{\extracolsep{\fill}}c l l | c l l@{}}
\toprule
round & profile & comparison & round & profile & comparison\\
\midrule
1 & $Z[0110]$ & $v\preceq c_{1}$ & 1 & $Z[1111]$ & $v\preceq c_{3}$\\
1 & $F_{5}[0010]$ & $g'\preceq v$ & 1 & $Y_{5}[0000]$ & $v\preceq f$\\
1 & $R_{2}[1101]$ & $c_{2}\preceq v$ & 1 & $R_{3}[0110]$ & $v\preceq c_{3}$\\
1 & $R_{3}[1111]$ & $c_{3}\preceq v$ & 1 & $R_{4}[1101]$ & $c_{4}\preceq v$\\
1 & $R_{5}[1110]$ & $c_{5}\preceq v$ & 2 & $Y_{3}[1001]$ & $y\preceq v$\\
3 & $Z[1101]$ & $v\preceq c_{2}$ &  &  & \\
\bottomrule
\end{tabular*}
\par\endgroup
\smallskip\noindent\textit{Survivors:} $R_{5}[1101]$, $Y_{2}[0011]$.
\end{certblock}

\begin{certblock}{A.5}\phantomsection\label{app:A5}\label{app:B5}
\noindent\textit{Core:} $K=(C;\,f,\,a,\,g)$ with $f\in F_{1}$, $a\in Y_{1}[0]$, $g\in F_{2}[00]$.
\par\noindent\textit{Additional assumptions:} No vertex outside the core lies in $U$ or $F_{1}$.
\par\smallskip
\begingroup\centering
\scriptsize\setlength{\tabcolsep}{2.2pt}\renewcommand{\arraystretch}{0.80}
\textit{Admissible profiles}\\[1pt]
\begin{tabular*}{0.98\textwidth}{@{\extracolsep{\fill}}c l | c l | c l@{}}
\toprule
cycle type & strings & cycle type & strings & cycle type & strings\\
\midrule
$Z$ & $[000]$, $[111]$ & $F_{2}$ & $[000]$ & $Y_{1}$ & $[000]$\\
$Y_{2}$ & $[010]$ & $Y_{3}$ & $[100]$, $[110]$ & $Y_{4}$ & $[101]$, $[111]$\\
$Y_{5}$ & $[011]$ & $R_{1}$ & $[011]$, $[111]$ & $R_{2}$ & $[100]$, $[101]$, $[111]$\\
$R_{3}$ & $[001]$, $[111]$ & $R_{4}$ & $[000]$, $[111]$ & $R_{5}$ & $[100]$, $[101]$, $[111]$\\
\bottomrule
\end{tabular*}
\par\endgroup
\smallskip
\begingroup\centering
\scriptsize\setlength{\tabcolsep}{2.2pt}\renewcommand{\arraystretch}{0.76}
\textit{Elimination rounds}\\[1pt]
\begin{tabular*}{0.98\textwidth}{@{\extracolsep{\fill}}c l l | c l l@{}}
\toprule
round & profile & comparison & round & profile & comparison\\
\midrule
1 & $Z[000]$ & $v\preceq c_{1}$ & 1 & $Y_{3}[110]$ & $c_{2}\preceq v$\\
1 & $R_{4}[000]$ & $v\preceq f$ & 1 & $R_{4}[111]$ & $c_{4}\preceq v$\\
2 & $R_{5}[111]$ & $c_{5}\preceq v$ &  &  & \\
\bottomrule
\end{tabular*}
\par\endgroup
\smallskip\noindent\textit{Survivors:} $Y_{1}[000]$, $F_{2}[000]$, $R_{2}[100]$, $R_{5}[100]$, $Y_{3}[100]$, $Y_{2}[010]$, $R_{3}[001]$, $R_{2}[101]$, $R_{5}[101]$, $Y_{4}[101]$, $R_{1}[011]$, $Y_{5}[011]$, $Z[111]$, $R_{2}[111]$, $R_{3}[111]$, $Y_{4}[111]$, $R_{1}[111]$.
\par\noindent\textit{Core comparison:} $a\preceq g$; the survivors containing $a$ but not $g$ are $Y_{2}[010]$.
\par\noindent\textit{Obstructions used:} \hyperlink{obs:B1}{B.1}.
\end{certblock}

\begin{certblock}{A.6}\phantomsection\label{app:A6}\label{app:B6}
\noindent\textit{Core:} $K=(C;\,f,\,a,\,g,\,x)$ with $f\in F_{1}$, $a\in Y_{1}[0]$, $g\in F_{2}[00]$, $x\in Y_{2}[010]$.
\par\noindent\textit{Additional assumptions:} No vertex outside the core lies in $U$ or $F_{1}$.
\par\smallskip
\begingroup\centering
\scriptsize\setlength{\tabcolsep}{2.2pt}\renewcommand{\arraystretch}{0.80}
\textit{Admissible profiles}\\[1pt]
\begin{tabular*}{0.98\textwidth}{@{\extracolsep{\fill}}c l | c l | c l@{}}
\toprule
cycle type & strings & cycle type & strings & cycle type & strings\\
\midrule
$Z$ & $[0000]$, $[1111]$ & $F_{2}$ & $[0000]$ & $Y_{1}$ & $[0001]$\\
$Y_{2}$ & $[0100]$ & $Y_{3}$ & $[1001]$, $[1101]$ & $Y_{4}$ & $[1011]$, $[1110]$\\
$Y_{5}$ & $[0110]$, $[0111]$ & $R_{1}$ & $[0110]$, $[1110]$, $[1111]$ & $R_{2}$ & $[1001]$, $[1011]$, $[1111]$\\
$R_{3}$ & $[1110]$ & $R_{4}$ & $[0000]$, $[1111]$ & $R_{5}$ & $[1011]$\\
\bottomrule
\end{tabular*}
\par\endgroup
\smallskip
\begingroup\centering
\scriptsize\setlength{\tabcolsep}{2.2pt}\renewcommand{\arraystretch}{0.76}
\textit{Elimination rounds}\\[1pt]
\begin{tabular*}{0.98\textwidth}{@{\extracolsep{\fill}}c l l | c l l@{}}
\toprule
round & profile & comparison & round & profile & comparison\\
\midrule
1 & $Z[0000]$ & $v\preceq c_{1}$ & 1 & $Z[1111]$ & $v\preceq c_{4}$\\
1 & $Y_{3}[1001]$ & $c_{2}\preceq v$ & 1 & $Y_{3}[1101]$ & $c_{2}\preceq v$\\
1 & $Y_{4}[1011]$ & $c_{5}\preceq v$ & 1 & $Y_{4}[1110]$ & $c_{3}\preceq v$\\
1 & $Y_{5}[0110]$ & $c_{1}\preceq v$ & 1 & $Y_{5}[0111]$ & $c_{1}\preceq v$\\
1 & $R_{1}[1110]$ & $c_{1}\preceq v$ & 1 & $R_{1}[1111]$ & $c_{1}\preceq v$\\
1 & $R_{2}[1011]$ & $c_{2}\preceq v$ & 1 & $R_{2}[1111]$ & $c_{2}\preceq v$\\
1 & $R_{4}[0000]$ & $v\preceq f$ & 1 & $R_{4}[1111]$ & $c_{4}\preceq v$\\
2 & $F_{2}[0000]$ & $g\preceq v$ & 2 & $Y_{1}[0001]$ & $a\preceq v$\\
2 & $Y_{2}[0100]$ & $x\preceq v$ & 2 & $R_{1}[0110]$ & $c_{1}\preceq v$\\
2 & $R_{2}[1001]$ & $c_{2}\preceq v$ & 2 & $R_{3}[1110]$ & $c_{3}\preceq v$\\
2 & $R_{5}[1011]$ & $c_{5}\preceq v$ &  &  & \\
\bottomrule
\end{tabular*}
\par\endgroup
\smallskip\noindent\textit{Survivors:} None.
\par\noindent\textit{Obstructions used:} \hyperlink{obs:B2}{B.2}, \hyperlink{obs:B3}{B.3}, \hyperlink{obs:B4}{B.4}, \hyperlink{obs:B5}{B.5}, \hyperlink{obs:B6}{B.6}, \hyperlink{obs:B7}{B.7}, \hyperlink{obs:B8}{B.8}, \hyperlink{obs:B9}{B.9}, \hyperlink{obs:B10}{B.10}, \hyperlink{obs:B11}{B.11}, \hyperlink{obs:B12}{B.12}, \hyperlink{obs:B13}{B.13}.
\end{certblock}

\begin{certblock}{A.7}\phantomsection\label{app:A7}\label{app:B7}
\noindent\textit{Core:} $K=(C;\,f,\,a,\,y,\,p,\,q)$ with $f\in F_{1}$, $a\in Y_{1}[0]$, $y\in Y_{3}[11]$, $p\in Y_{5}[010]$, $q\in R_{5}[1001]$.
\par\noindent\textit{Additional assumptions:} No vertex outside the core lies in $U$, $F_{1}$, $F_{2}$, or $F_{5}$.
\par\noindent\textit{Directly excluded:} $Y_{1}[00111]$ (\hyperlink{obs:B15}{B.15}), $R_{5}[11010]$ (\hyperlink{obs:B16}{B.16}).
\par\smallskip
\begingroup\centering
\scriptsize\setlength{\tabcolsep}{2.2pt}\renewcommand{\arraystretch}{0.80}
\textit{Admissible profiles}\\[1pt]
\begin{tabular*}{0.98\textwidth}{@{\extracolsep{\fill}}c l | c l | c l@{}}
\toprule
cycle type & strings & cycle type & strings & cycle type & strings\\
\midrule
$Z$ & $[00000]$, $[11000]$ & $R_{1}$ & $[01101]$ & $Y_{1}$ & $[00110]$, $[00111]$\\
$R_{2}$ & $[10010]$, $[11000]$ & $Y_{2}$ & $[01101]$, $[01111]$ & $R_{3}$ & $[00110]$, $[11110]$\\
$Y_{3}$ & $[10010]$, $[11000]$, $[11001]$ & $R_{4}$ & $[11001]$ & $Y_{4}$ & $[10110]$\\
$R_{5}$ & $[10010]$, $[10110]$, $[11010]$ & $Y_{5}$ & $[01001]$ &  & \\
\bottomrule
\end{tabular*}
\par\endgroup
\smallskip\noindent\textit{Elimination:} None needed.

\smallskip\noindent\textit{Profiles remaining after the direct exclusions:} $R_{1}[01101]$, $R_{2}[10010]$, $R_{2}[11000]$, $R_{3}[00110]$, $R_{3}[11110]$, $R_{4}[11001]$, $R_{5}[10010]$, $R_{5}[10110]$, $Y_{1}[00110]$, $Y_{2}[01101]$, $Y_{2}[01111]$, $Y_{3}[10010]$, $Y_{3}[11000]$, $Y_{3}[11001]$, $Y_{4}[10110]$, $Y_{5}[01001]$, $Z[00000]$, $Z[11000]$.
\par\noindent\textit{Core comparison:} $q\preceq c_{5}$; no profile remaining after the direct exclusions contains $q$ but not $c_{5}$.
\par\noindent\textit{Obstructions used:} \hyperlink{obs:B15}{B.15}, \hyperlink{obs:B16}{B.16}.
\end{certblock}

\begin{certblock}{A.8}\phantomsection\label{app:A8}\label{app:B8}
\noindent\textit{Core:} $K=(C;\,f,\,a,\,y,\,p,\,q)$ with $f\in F_{1}$, $a\in Y_{1}[0]$, $y\in Y_{3}[11]$, $p\in R_{1}[110]$, $q\in Y_{1}[0001]$.
\par\noindent\textit{Additional assumptions:} No vertex outside the core lies in $U$, $F_{1}$, $F_{2}$, or $F_{5}$.
\par\noindent\textit{Directly excluded:} $R_{5}[10111]$ (\hyperlink{obs:B17}{B.17}), $Y_{4}[10101]$ (\hyperlink{obs:B18}{B.18}).
\par\smallskip
\begingroup\centering
\scriptsize\setlength{\tabcolsep}{2.2pt}\renewcommand{\arraystretch}{0.80}
\textit{Admissible profiles}\\[1pt]
\begin{tabular*}{0.98\textwidth}{@{\extracolsep{\fill}}c l | c l | c l@{}}
\toprule
cycle type & strings & cycle type & strings & cycle type & strings\\
\midrule
$Z$ & $[00000]$ & $R_{2}$ & $[10010]$, $[11010]$ & $Y_{1}$ & $[00010]$, $[00110]$\\
$R_{3}$ & $[00110]$, $[11101]$ & $Y_{2}$ & $[01101]$, $[01111]$ & $R_{4}$ & $[00000]$, $[11001]$\\
$Y_{3}$ & $[10000]$, $[11000]$, $[11001]$ & $R_{5}$ & $[10110]$, $[10111]$ & $R_{1}$ & $[01101]$, $[11001]$\\
$Y_{4}$ & $[10101]$ &  &  &  & \\
\bottomrule
\end{tabular*}
\par\endgroup
\smallskip\noindent\textit{Elimination:} None needed.

\smallskip\noindent\textit{Profiles remaining after the direct exclusions:} $R_{1}[01101]$, $R_{1}[11001]$, $R_{2}[10010]$, $R_{2}[11010]$, $R_{3}[00110]$, $R_{3}[11101]$, $R_{4}[00000]$, $R_{4}[11001]$, $R_{5}[10110]$, $Y_{1}[00010]$, $Y_{1}[00110]$, $Y_{2}[01101]$, $Y_{2}[01111]$, $Y_{3}[10000]$, $Y_{3}[11000]$, $Y_{3}[11001]$, $Z[00000]$.
\par\noindent\textit{Core comparison:} $q\preceq a$; no profile remaining after the direct exclusions contains $q$ but not $a$.
\par\noindent\textit{Obstructions used:} \hyperlink{obs:B17}{B.17}, \hyperlink{obs:B18}{B.18}.
\end{certblock}

\begin{certblock}{A.9}\phantomsection\label{app:A9}\label{app:B9}
\noindent\textit{Core:} $K=(C;\,f,\,a)$ with $f\in F_{1}$, $a\in Y_{1}[0]$.
\par\noindent\textit{Additional assumptions:} No vertex outside the core lies in $U$, $F_{1}$, $F_{2}$, $F_{5}$, $Y_{2}$, or $Y_{5}$. No vertex outside the core realizes $Y_{3}[11]$, $Y_{4}[11]$, or $R_{1}[11]$.
\par\smallskip
\begingroup\centering
\scriptsize\setlength{\tabcolsep}{2.2pt}\renewcommand{\arraystretch}{0.80}
\textit{Admissible profiles}\\[1pt]
\begin{tabular*}{0.98\textwidth}{@{\extracolsep{\fill}}c l | c l | c l@{}}
\toprule
cycle type & strings & cycle type & strings & cycle type & strings\\
\midrule
$Z$ & $[00]$, $[11]$ & $Y_{1}$ & $[00]$ & $Y_{3}$ & $[10]$\\
$Y_{4}$ & $[10]$ & $R_{1}$ & $[01]$ & $R_{2}$ & $[10]$, $[11]$\\
$R_{3}$ & $[00]$, $[11]$ & $R_{4}$ & $[00]$, $[11]$ & $R_{5}$ & $[10]$, $[11]$\\
\bottomrule
\end{tabular*}
\par\endgroup
\smallskip
\begingroup\centering
\scriptsize\setlength{\tabcolsep}{2.2pt}\renewcommand{\arraystretch}{0.76}
\textit{Elimination rounds}\\[1pt]
\begin{tabular*}{0.98\textwidth}{@{\extracolsep{\fill}}c l l | c l l@{}}
\toprule
round & profile & comparison & round & profile & comparison\\
\midrule
1 & $Z[00]$ & $v\preceq c_{1}$ & 1 & $Y_{3}[10]$ & $c_{2}\preceq v$\\
1 & $Y_{4}[10]$ & $c_{5}\preceq v$ & 1 & $R_{1}[01]$ & $c_{1}\preceq v$\\
1 & $R_{2}[11]$ & $c_{2}\preceq v$ & 1 & $R_{3}[00]$ & $v\preceq f$\\
1 & $R_{3}[11]$ & $c_{3}\preceq v$ & 1 & $R_{4}[00]$ & $v\preceq f$\\
1 & $R_{4}[11]$ & $c_{4}\preceq v$ & 1 & $R_{5}[11]$ & $c_{5}\preceq v$\\
2 & $Z[11]$ & $v\preceq c_{3}$ & 2 & $Y_{1}[00]$ & $a\preceq v$\\
2 & $R_{2}[10]$ & $c_{2}\preceq v$ & 2 & $R_{5}[10]$ & $c_{5}\preceq v$\\
\bottomrule
\end{tabular*}
\par\endgroup
\smallskip\noindent\textit{Survivors:} None.
\end{certblock}

\begin{certblock}{A.10}\phantomsection\label{app:A10}\label{app:B10}
\noindent\textit{Core:} $K=(C;\,f,\,a,\,x)$ with $f\in F_{1}$, $a\in Y_{1}[0]$, $x\in Y_{2}[01]$.
\par\noindent\textit{Additional assumptions:} No vertex outside the core lies in $U$, $F_{1}$, $F_{2}$, $F_{3}$, $F_{4}$, or $F_{5}$. No vertex outside the core realizes $Y_{3}[111]$ or $Y_{4}[110]$.
\par\smallskip
\begingroup\centering
\scriptsize\setlength{\tabcolsep}{2.2pt}\renewcommand{\arraystretch}{0.80}
\textit{Admissible profiles}\\[1pt]
\begin{tabular*}{0.98\textwidth}{@{\extracolsep{\fill}}c l | c l | c l@{}}
\toprule
cycle type & strings & cycle type & strings & cycle type & strings\\
\midrule
$Z$ & $[000]$, $[110]$, $[111]$ & $Y_{1}$ & $[001]$ & $Y_{2}$ & $[010]$\\
$Y_{3}$ & $[101]$ & $Y_{4}$ & $[101]$ & $Y_{5}$ & $[010]$, $[011]$\\
$R_{1}$ & $[010]$, $[110]$, $[111]$ & $R_{2}$ & $[101]$, $[111]$ & $R_{3}$ & $[000]$, $[110]$\\
$R_{4}$ & $[000]$, $[001]$, $[110]$, $[111]$ & $R_{5}$ & $[101]$, $[110]$ &  & \\
\bottomrule
\end{tabular*}
\par\endgroup
\smallskip
\begingroup\centering
\scriptsize\setlength{\tabcolsep}{2.2pt}\renewcommand{\arraystretch}{0.76}
\textit{Elimination rounds}\\[1pt]
\begin{tabular*}{0.98\textwidth}{@{\extracolsep{\fill}}c l l | c l l@{}}
\toprule
round & profile & comparison & round & profile & comparison\\
\midrule
1 & $Z[000]$ & $v\preceq c_{1}$ & 1 & $R_{1}[010]$ & $c_{1}\preceq v$\\
1 & $R_{4}[110]$ & $v\preceq c_{4}$ & 2 & $R_{3}[110]$ & $v\preceq c_{3}$\\
2 & $R_{4}[111]$ & $v\preceq c_{4}$ &  &  & \\
\bottomrule
\end{tabular*}
\par\endgroup
\smallskip\noindent\textit{Survivors:} $R_{3}[000]$, $R_{4}[000]$, $Y_{5}[010]$, $Y_{2}[010]$, $Z[110]$, $R_{5}[110]$, $R_{1}[110]$, $Y_{1}[001]$, $R_{4}[001]$, $R_{2}[101]$, $R_{5}[101]$, $Y_{4}[101]$, $Y_{3}[101]$, $Y_{5}[011]$, $Z[111]$, $R_{2}[111]$, $R_{1}[111]$.
\par\noindent\textit{Core comparison:} $c_{1}\preceq x$; the survivors containing $c_{1}$ but not $x$ are $R_{5}[110]$.
\end{certblock}

\begin{certblock}{A.11}\phantomsection\label{app:A11}\label{app:B11}
\noindent\textit{Core:} $K=(C;\,f,\,a,\,x,\,q')$ with $f\in F_{1}$, $a\in Y_{1}[0]$, $x\in Y_{2}[01]$, $q'\in R_{5}[110]$.
\par\noindent\textit{Additional assumptions:} No vertex outside the core lies in $U$, $F_{1}$, $F_{2}$, $F_{3}$, $F_{4}$, or $F_{5}$. No vertex outside the core realizes $Y_{3}[1110]$ or $Y_{4}[1100]$.
\par\smallskip
\begingroup\centering
\scriptsize\setlength{\tabcolsep}{2.2pt}\renewcommand{\arraystretch}{0.80}
\textit{Admissible profiles}\\[1pt]
\begin{tabular*}{0.98\textwidth}{@{\extracolsep{\fill}}c l | c l | c l@{}}
\toprule
cycle type & strings & cycle type & strings & cycle type & strings\\
\midrule
$Z$ & $[0000]$, $[1100]$ & $Y_{1}$ & $[0011]$ & $Y_{2}$ & $[0100]$\\
$Y_{3}$ & $[1011]$ & $Y_{5}$ & $[0101]$, $[0111]$ & $R_{1}$ & $[0101]$, $[1101]$, $[1111]$\\
$R_{2}$ & $[1010]$, $[1011]$, $[1110]$ & $R_{3}$ & $[1100]$ & $R_{4}$ & $[0011]$, $[1111]$\\
$R_{5}$ & $[1010]$, $[1100]$ &  &  &  & \\
\bottomrule
\end{tabular*}
\par\endgroup
\smallskip
\begingroup\centering
\scriptsize\setlength{\tabcolsep}{2.2pt}\renewcommand{\arraystretch}{0.76}
\textit{Elimination rounds}\\[1pt]
\begin{tabular*}{0.98\textwidth}{@{\extracolsep{\fill}}c l l | c l l@{}}
\toprule
round & profile & comparison & round & profile & comparison\\
\midrule
1 & $Z[0000]$ & $v\preceq c_{1}$ & 1 & $Z[1100]$ & $v\preceq c_{3}$\\
1 & $Y_{3}[1011]$ & $c_{2}\preceq v$ & 1 & $Y_{5}[0101]$ & $c_{1}\preceq v$\\
1 & $Y_{5}[0111]$ & $c_{1}\preceq v$ & 1 & $R_{1}[0101]$ & $c_{1}\preceq v$\\
1 & $R_{1}[1101]$ & $c_{1}\preceq v$ & 1 & $R_{1}[1111]$ & $c_{1}\preceq v$\\
1 & $R_{2}[1010]$ & $c_{2}\preceq v$ & 1 & $R_{2}[1011]$ & $c_{2}\preceq v$\\
1 & $R_{2}[1110]$ & $c_{2}\preceq v$ & 1 & $R_{4}[0011]$ & $v\preceq c_{4}$\\
1 & $R_{4}[1111]$ & $c_{4}\preceq v$ & 2 & $Y_{1}[0011]$ & $a\preceq v$\\
2 & $Y_{2}[0100]$ & $x\preceq v$ & 2 & $R_{3}[1100]$ & $c_{3}\preceq v$\\
2 & $R_{5}[1010]$ & $c_{5}\preceq v$ & 2 & $R_{5}[1100]$ & $q'\preceq v$\\
\bottomrule
\end{tabular*}
\par\endgroup
\smallskip\noindent\textit{Survivors:} None.
\par\noindent\textit{Obstructions used:} \hyperlink{obs:B19}{B.19}, \hyperlink{obs:B20}{B.20}, \hyperlink{obs:B21}{B.21}, \hyperlink{obs:B22}{B.22}, \hyperlink{obs:B23}{B.23}.
\end{certblock}

\begin{certblock}{A.12}\phantomsection\label{app:A12}\label{app:B12}
\noindent\textit{Core:} $K=(C;\,f,\,a,\,r)$ with $f\in F_{1}$, $a\in Y_{1}[0]$, $r\in R_{1}[11]$.
\par\noindent\textit{Additional assumptions:} No vertex outside the core lies in $U$, $F_{1}$, $F_{2}$, $F_{3}$, $F_{4}$, or $F_{5}$. No vertex outside the core realizes $Y_{3}[110]$ or $Y_{4}[110]$.
\par\smallskip
\begingroup\centering
\scriptsize\setlength{\tabcolsep}{2.2pt}\renewcommand{\arraystretch}{0.80}
\textit{Admissible profiles}\\[1pt]
\begin{tabular*}{0.98\textwidth}{@{\extracolsep{\fill}}c l | c l | c l@{}}
\toprule
cycle type & strings & cycle type & strings & cycle type & strings\\
\midrule
$Z$ & $[000]$, $[110]$ & $Y_{1}$ & $[001]$ & $Y_{2}$ & $[010]$, $[011]$\\
$Y_{3}$ & $[100]$ & $Y_{4}$ & $[100]$ & $Y_{5}$ & $[010]$, $[011]$\\
$R_{1}$ & $[010]$, $[110]$ & $R_{2}$ & $[101]$, $[111]$ & $R_{3}$ & $[000]$, $[001]$, $[110]$\\
$R_{4}$ & $[000]$, $[001]$, $[110]$ & $R_{5}$ & $[101]$, $[111]$ &  & \\
\bottomrule
\end{tabular*}
\par\endgroup
\smallskip
\begingroup\centering
\scriptsize\setlength{\tabcolsep}{2.2pt}\renewcommand{\arraystretch}{0.76}
\textit{Elimination rounds}\\[1pt]
\begin{tabular*}{0.98\textwidth}{@{\extracolsep{\fill}}c l l | c l l@{}}
\toprule
round & profile & comparison & round & profile & comparison\\
\midrule
1 & $Z[000]$ & $v\preceq c_{1}$ & 1 & $R_{1}[010]$ & $c_{1}\preceq v$\\
1 & $R_{2}[101]$ & $c_{2}\preceq v$ & 1 & $R_{5}[101]$ & $c_{5}\preceq v$\\
2 & $R_{3}[110]$ & $v\preceq c_{3}$ & 2 & $R_{4}[110]$ & $v\preceq c_{4}$\\
\bottomrule
\end{tabular*}
\par\endgroup
\smallskip\noindent\textit{Survivors:} $R_{3}[000]$, $R_{4}[000]$, $Y_{4}[100]$, $Y_{3}[100]$, $Y_{5}[010]$, $Y_{2}[010]$, $Z[110]$, $R_{1}[110]$, $R_{3}[001]$, $Y_{1}[001]$, $R_{4}[001]$, $Y_{5}[011]$, $Y_{2}[011]$, $R_{2}[111]$, $R_{5}[111]$.
\par\noindent\textit{Core comparison:} $c_{1}\preceq r$; the survivors containing $c_{1}$ but not $r$ are $Y_{4}[100]$, $Y_{3}[100]$.
\end{certblock}

\begin{certblock}{A.13}\phantomsection\label{app:A13}\label{app:B13}
\noindent\textit{Core:} $K=(C;\,f,\,a,\,r,\,y)$ with $f\in F_{1}$, $a\in Y_{1}[0]$, $r\in R_{1}[11]$, $y\in Y_{3}[100]$.
\par\noindent\textit{Additional assumptions:} No vertex outside the core lies in $U$, $F_{1}$, $F_{2}$, $F_{3}$, $F_{4}$, or $F_{5}$. No vertex outside the core realizes $Y_{3}[1100]$ or $Y_{4}[1101]$.
\par\smallskip
\begingroup\centering
\scriptsize\setlength{\tabcolsep}{2.2pt}\renewcommand{\arraystretch}{0.80}
\textit{Admissible profiles}\\[1pt]
\begin{tabular*}{0.98\textwidth}{@{\extracolsep{\fill}}c l | c l | c l@{}}
\toprule
cycle type & strings & cycle type & strings & cycle type & strings\\
\midrule
$Z$ & $[0000]$ & $Y_{1}$ & $[0010]$, $[0011]$ & $Y_{2}$ & $[0101]$, $[0111]$\\
$Y_{3}$ & $[1000]$ & $Y_{5}$ & $[0101]$, $[0111]$ & $R_{1}$ & $[0101]$, $[1100]$\\
$R_{2}$ & $[1010]$ & $R_{3}$ & $[0011]$, $[1101]$ & $R_{4}$ & $[0000]$, $[1100]$\\
$R_{5}$ & $[1011]$, $[1111]$ &  &  &  & \\
\bottomrule
\end{tabular*}
\par\endgroup
\smallskip
\begingroup\centering
\scriptsize\setlength{\tabcolsep}{2.2pt}\renewcommand{\arraystretch}{0.76}
\textit{Elimination rounds}\\[1pt]
\begin{tabular*}{0.98\textwidth}{@{\extracolsep{\fill}}c l l | c l l@{}}
\toprule
round & profile & comparison & round & profile & comparison\\
\midrule
1 & $Z[0000]$ & $v\preceq c_{1}$ & 1 & $Y_{1}[0011]$ & $a\preceq v$\\
1 & $Y_{2}[0101]$ & $c_{1}\preceq v$ & 1 & $Y_{2}[0111]$ & $c_{1}\preceq v$\\
1 & $Y_{5}[0101]$ & $c_{1}\preceq v$ & 1 & $Y_{5}[0111]$ & $c_{1}\preceq v$\\
1 & $R_{1}[0101]$ & $c_{1}\preceq v$ & 1 & $R_{2}[1010]$ & $c_{2}\preceq v$\\
1 & $R_{3}[0011]$ & $v\preceq f$ & 1 & $R_{4}[0000]$ & $v\preceq c_{4}$\\
1 & $R_{5}[1011]$ & $c_{5}\preceq v$ & 1 & $R_{5}[1111]$ & $c_{5}\preceq v$\\
2 & $Y_{1}[0010]$ & $a\preceq v$ & 2 & $Y_{3}[1000]$ & $y\preceq v$\\
2 & $R_{1}[1100]$ & $r\preceq v$ & 2 & $R_{3}[1101]$ & $c_{3}\preceq v$\\
2 & $R_{4}[1100]$ & $c_{4}\preceq v$ &  &  & \\
\bottomrule
\end{tabular*}
\par\endgroup
\smallskip\noindent\textit{Survivors:} None.
\par\noindent\textit{Obstructions used:} \hyperlink{obs:B24}{B.24}, \hyperlink{obs:B25}{B.25}, \hyperlink{obs:B26}{B.26}, \hyperlink{obs:B27}{B.27}.
\end{certblock}

\begin{certblock}{A.14}\phantomsection\label{app:A14}\label{app:B14}
\noindent\textit{Core:} $K=(C;\,f,\,g)$ with $f\in F_{1}$, $g\in F_{2}[0]$.
\par\noindent\textit{Additional assumptions:} No vertex outside the core lies in $U$, $F_{1}$, $F_{3}$, $F_{4}$, $F_{5}$, $Y_{1}$, $Y_{3}$, $Y_{4}$, or $Y_{5}$. No vertex outside the core realizes $R_{1}[11]$ or $R_{2}[11]$.
\par\smallskip
\begingroup\centering
\scriptsize\setlength{\tabcolsep}{2.2pt}\renewcommand{\arraystretch}{0.80}
\textit{Admissible profiles}\\[1pt]
\begin{tabular*}{0.98\textwidth}{@{\extracolsep{\fill}}c l | c l | c l@{}}
\toprule
cycle type & strings & cycle type & strings & cycle type & strings\\
\midrule
$Z$ & $[00]$, $[11]$ & $F_{2}$ & $[00]$ & $Y_{2}$ & $[00]$\\
$R_{1}$ & $[01]$ & $R_{2}$ & $[10]$ & $R_{3}$ & $[01]$, $[11]$\\
$R_{4}$ & $[00]$, $[11]$ & $R_{5}$ & $[10]$, $[11]$ &  & \\
\bottomrule
\end{tabular*}
\par\endgroup
\smallskip
\begingroup\centering
\scriptsize\setlength{\tabcolsep}{2.2pt}\renewcommand{\arraystretch}{0.76}
\textit{Elimination rounds}\\[1pt]
\begin{tabular*}{0.98\textwidth}{@{\extracolsep{\fill}}c l l | c l l@{}}
\toprule
round & profile & comparison & round & profile & comparison\\
\midrule
1 & $Z[00]$ & $v\preceq c_{1}$ & 1 & $Z[11]$ & $v\preceq c_{4}$\\
1 & $Y_{2}[00]$ & $v\preceq f$ & 1 & $R_{1}[01]$ & $c_{1}\preceq v$\\
1 & $R_{2}[10]$ & $c_{2}\preceq v$ & 1 & $R_{3}[11]$ & $c_{3}\preceq v$\\
1 & $R_{4}[00]$ & $v\preceq c_{4}$ & 1 & $R_{4}[11]$ & $c_{4}\preceq v$\\
2 & $R_{5}[11]$ & $v\preceq c_{5}$ & 3 & $F_{2}[00]$ & $g\preceq v$\\
\bottomrule
\end{tabular*}
\par\endgroup
\smallskip\noindent\textit{Survivors:} $R_{5}[10]$, $R_{3}[01]$.
\end{certblock}

\begin{certblock}{A.15}\phantomsection\label{app:A15}\label{app:B15}
\noindent\textit{Core:} $K=(C;\,f,\,g,\,r)$ with $f\in F_{1}$, $g\in F_{2}[0]$, $r\in R_{1}[11]$.
\par\noindent\textit{Additional assumptions:} No vertex outside the core lies in $U$, $F_{1}$, $F_{3}$, $F_{4}$, $F_{5}$, or $Y_{1}$.
\par\smallskip
\begingroup\centering
\scriptsize\setlength{\tabcolsep}{2.2pt}\renewcommand{\arraystretch}{0.80}
\textit{Admissible profiles}\\[1pt]
\begin{tabular*}{0.98\textwidth}{@{\extracolsep{\fill}}c l | c l | c l@{}}
\toprule
cycle type & strings & cycle type & strings & cycle type & strings\\
\midrule
$Z$ & $[000]$, $[110]$ & $F_{2}$ & $[001]$ & $Y_{2}$ & $[000]$, $[001]$\\
$Y_{3}$ & $[100]$ & $Y_{4}$ & $[110]$ & $Y_{5}$ & $[010]$\\
$R_{1}$ & $[010]$, $[110]$ & $R_{2}$ & $[101]$, $[111]$ & $R_{3}$ & $[010]$, $[011]$, $[110]$\\
$R_{4}$ & $[000]$, $[001]$, $[110]$ & $R_{5}$ & $[101]$, $[111]$ &  & \\
\bottomrule
\end{tabular*}
\par\endgroup
\smallskip
\begingroup\centering
\scriptsize\setlength{\tabcolsep}{2.2pt}\renewcommand{\arraystretch}{0.76}
\textit{Elimination rounds}\\[1pt]
\begin{tabular*}{0.98\textwidth}{@{\extracolsep{\fill}}c l l | c l l@{}}
\toprule
round & profile & comparison & round & profile & comparison\\
\midrule
1 & $Z[000]$ & $v\preceq c_{1}$ & 1 & $Z[110]$ & $v\preceq c_{5}$\\
1 & $Y_{2}[000]$ & $v\preceq f$ & 1 & $Y_{4}[110]$ & $c_{3}\preceq v$\\
1 & $R_{2}[101]$ & $c_{2}\preceq v$ & 1 & $R_{4}[110]$ & $c_{4}\preceq v$\\
1 & $R_{5}[111]$ & $v\preceq c_{5}$ & 2 & $R_{1}[010]$ & $c_{1}\preceq v$\\
\bottomrule
\end{tabular*}
\par\endgroup
\smallskip\noindent\textit{Survivors:} $R_{4}[000]$, $Y_{3}[100]$, $R_{3}[010]$, $Y_{5}[010]$, $R_{3}[110]$, $R_{1}[110]$, $R_{4}[001]$, $Y_{2}[001]$, $F_{2}[001]$, $R_{5}[101]$, $R_{3}[011]$, $R_{2}[111]$.
\par\noindent\textit{Core comparison:} $c_{1}\preceq r$; the survivors containing $c_{1}$ but not $r$ are $Y_{3}[100]$.
\end{certblock}

\begin{certblock}{A.16}\phantomsection\label{app:A16}\label{app:B16}
\noindent\textit{Core:} $K=(C;\,f,\,g,\,y)$ with $f\in F_{1}$, $g\in F_{2}[0]$, $y\in Y_{3}[10]$.
\par\noindent\textit{Additional assumptions:} No vertex outside the core lies in $U$, $F_{1}$, $F_{3}$, $F_{4}$, $F_{5}$, or $Y_{1}$.
\par\smallskip
\begingroup\centering
\scriptsize\setlength{\tabcolsep}{2.2pt}\renewcommand{\arraystretch}{0.80}
\textit{Admissible profiles}\\[1pt]
\begin{tabular*}{0.98\textwidth}{@{\extracolsep{\fill}}c l | c l | c l@{}}
\toprule
cycle type & strings & cycle type & strings & cycle type & strings\\
\midrule
$Z$ & $[000]$, $[110]$, $[111]$ & $F_{2}$ & $[000]$ & $Y_{2}$ & $[001]$\\
$Y_{3}$ & $[100]$ & $Y_{4}$ & $[111]$ & $Y_{5}$ & $[011]$\\
$R_{1}$ & $[011]$, $[110]$ & $R_{2}$ & $[100]$, $[110]$ & $R_{3}$ & $[011]$, $[111]$\\
$R_{4}$ & $[000]$, $[110]$ & $R_{5}$ & $[100]$, $[101]$, $[110]$, $[111]$ &  & \\
\bottomrule
\end{tabular*}
\par\endgroup
\smallskip
\begingroup\centering
\scriptsize\setlength{\tabcolsep}{2.2pt}\renewcommand{\arraystretch}{0.76}
\textit{Elimination rounds}\\[1pt]
\begin{tabular*}{0.98\textwidth}{@{\extracolsep{\fill}}c l l | c l l@{}}
\toprule
round & profile & comparison & round & profile & comparison\\
\midrule
1 & $Z[000]$ & $v\preceq c_{1}$ & 1 & $Z[111]$ & $v\preceq c_{5}$\\
1 & $Y_{2}[001]$ & $v\preceq f$ & 1 & $R_{4}[110]$ & $c_{4}\preceq v$\\
1 & $R_{5}[110]$ & $v\preceq c_{5}$ &  &  & \\
\bottomrule
\end{tabular*}
\par\endgroup
\smallskip\noindent\textit{Survivors:} $R_{4}[000]$, $F_{2}[000]$, $R_{2}[100]$, $R_{5}[100]$, $Y_{3}[100]$, $Z[110]$, $R_{2}[110]$, $R_{1}[110]$, $R_{5}[101]$, $R_{3}[011]$, $R_{1}[011]$, $Y_{5}[011]$, $R_{5}[111]$, $R_{3}[111]$, $Y_{4}[111]$.
\par\noindent\textit{Core comparison:} $c_{2}\preceq y$; the survivors containing $c_{2}$ but not $y$ are $R_{1}[110]$.
\end{certblock}

\begin{certblock}{A.17}\phantomsection\label{app:A17}\label{app:B17}
\noindent\textit{Core:} $K=(C;\,f,\,g,\,r,\,y)$ with $f\in F_{1}$, $g\in F_{2}[0]$, $r\in R_{1}[11]$, $y\in Y_{3}[100]$.
\par\noindent\textit{Additional assumptions:} No vertex outside the core lies in $U$, $F_{1}$, $F_{3}$, $F_{4}$, $F_{5}$, or $Y_{1}$.
\par\noindent\textit{Directly excluded:} $R_{3}[0111]$ (\hyperlink{obs:B28}{B.28}).
\par\smallskip
\begingroup\centering
\scriptsize\setlength{\tabcolsep}{2.2pt}\renewcommand{\arraystretch}{0.80}
\textit{Admissible profiles}\\[1pt]
\begin{tabular*}{0.98\textwidth}{@{\extracolsep{\fill}}c l | c l | c l@{}}
\toprule
cycle type & strings & cycle type & strings & cycle type & strings\\
\midrule
$Z$ & $[0000]$, $[1100]$ & $F_{2}$ & $[0010]$ & $Y_{2}$ & $[0011]$\\
$Y_{3}$ & $[1000]$ & $Y_{4}$ & $[1101]$ & $Y_{5}$ & $[0101]$\\
$R_{1}$ & $[0101]$, $[1100]$ & $R_{2}$ & $[1010]$, $[1110]$ & $R_{3}$ & $[0101]$, $[0111]$, $[1101]$\\
$R_{4}$ & $[0000]$, $[1100]$ & $R_{5}$ & $[1011]$, $[1111]$ &  & \\
\bottomrule
\end{tabular*}
\par\endgroup
\smallskip
\begingroup\centering
\scriptsize\setlength{\tabcolsep}{2.2pt}\renewcommand{\arraystretch}{0.76}
\textit{Elimination rounds}\\[1pt]
\begin{tabular*}{0.98\textwidth}{@{\extracolsep{\fill}}c l l | c l l@{}}
\toprule
round & profile & comparison & round & profile & comparison\\
\midrule
1 & $Z[0000]$ & $v\preceq c_{1}$ & 1 & $Z[1100]$ & $v\preceq c_{5}$\\
1 & $Y_{2}[0011]$ & $v\preceq f$ & 1 & $Y_{3}[1000]$ & $y\preceq v$\\
1 & $Y_{4}[1101]$ & $c_{3}\preceq v$ & 1 & $Y_{5}[0101]$ & $c_{1}\preceq v$\\
1 & $R_{1}[1100]$ & $r\preceq v$ & 1 & $R_{2}[1010]$ & $c_{2}\preceq v$\\
1 & $R_{2}[1110]$ & $c_{2}\preceq v$ & 1 & $R_{3}[0101]$ & $v\preceq c_{3}$\\
1 & $R_{4}[0000]$ & $v\preceq f$ & 1 & $R_{4}[1100]$ & $c_{4}\preceq v$\\
1 & $R_{5}[1011]$ & $v\preceq c_{5}$ & 1 & $R_{5}[1111]$ & $v\preceq c_{5}$\\
2 & $F_{2}[0010]$ & $g\preceq v$ & 2 & $R_{1}[0101]$ & $c_{1}\preceq v$\\
2 & $R_{3}[1101]$ & $c_{3}\preceq v$ &  &  & \\
\bottomrule
\end{tabular*}
\par\endgroup
\smallskip\noindent\textit{Survivors:} None.
\par\noindent\textit{Obstructions used:} \hyperlink{obs:B28}{B.28}, \hyperlink{obs:B29}{B.29}, \hyperlink{obs:B30}{B.30}.
\end{certblock}

\begin{certblock}{A.18}\phantomsection\label{app:A18}\label{app:B18}
\noindent\textit{Core:} $K=(C;\,f,\,g,\,b)$ with $f\in F_{1}$, $g\in F_{2}[0]$, $b\in Y_{5}[01]$.
\par\noindent\textit{Additional assumptions:} No vertex outside the core lies in $U$, $F_{1}$, $F_{3}$, $F_{4}$, $F_{5}$, or $Y_{1}$.
\par\smallskip
\begingroup\centering
\scriptsize\setlength{\tabcolsep}{2.2pt}\renewcommand{\arraystretch}{0.80}
\textit{Admissible profiles}\\[1pt]
\begin{tabular*}{0.98\textwidth}{@{\extracolsep{\fill}}c l | c l | c l@{}}
\toprule
cycle type & strings & cycle type & strings & cycle type & strings\\
\midrule
$Z$ & $[000]$, $[110]$, $[111]$ & $F_{2}$ & $[001]$ & $Y_{2}$ & $[000]$, $[001]$\\
$Y_{3}$ & $[101]$ & $Y_{4}$ & $[111]$ & $Y_{5}$ & $[010]$\\
$R_{1}$ & $[010]$, $[110]$ & $R_{2}$ & $[101]$, $[110]$ & $R_{3}$ & $[010]$, $[011]$, $[110]$, $[111]$\\
$R_{4}$ & $[000]$, $[110]$ & $R_{5}$ & $[101]$, $[111]$ &  & \\
\bottomrule
\end{tabular*}
\par\endgroup
\smallskip
\begingroup\centering
\scriptsize\setlength{\tabcolsep}{2.2pt}\renewcommand{\arraystretch}{0.76}
\textit{Elimination rounds}\\[1pt]
\begin{tabular*}{0.98\textwidth}{@{\extracolsep{\fill}}c l l | c l l@{}}
\toprule
round & profile & comparison & round & profile & comparison\\
\midrule
1 & $Z[000]$ & $v\preceq c_{1}$ & 1 & $Z[111]$ & $v\preceq c_{5}$\\
1 & $R_{4}[110]$ & $c_{4}\preceq v$ &  &  & \\
\bottomrule
\end{tabular*}
\par\endgroup
\smallskip\noindent\textit{Survivors:} $R_{4}[000]$, $Y_{2}[000]$, $R_{3}[010]$, $R_{1}[010]$, $Y_{5}[010]$, $Z[110]$, $R_{2}[110]$, $R_{3}[110]$, $R_{1}[110]$, $Y_{2}[001]$, $F_{2}[001]$, $R_{2}[101]$, $R_{5}[101]$, $Y_{3}[101]$, $R_{3}[011]$, $R_{5}[111]$, $R_{3}[111]$, $Y_{4}[111]$.
\par\noindent\textit{Core comparison:} $c_{1}\preceq b$; the survivors containing $c_{1}$ but not $b$ are $R_{2}[110]$.
\end{certblock}

\begin{certblock}{A.19}\phantomsection\label{app:A19}\label{app:B19}
\noindent\textit{Core:} $K=(C;\,f,\,g,\,q)$ with $f\in F_{1}$, $g\in F_{2}[0]$, $q\in R_{2}[11]$.
\par\noindent\textit{Additional assumptions:} No vertex outside the core lies in $U$, $F_{1}$, $F_{3}$, $F_{4}$, $F_{5}$, or $Y_{1}$.
\par\smallskip
\begingroup\centering
\scriptsize\setlength{\tabcolsep}{2.2pt}\renewcommand{\arraystretch}{0.80}
\textit{Admissible profiles}\\[1pt]
\begin{tabular*}{0.98\textwidth}{@{\extracolsep{\fill}}c l | c l | c l@{}}
\toprule
cycle type & strings & cycle type & strings & cycle type & strings\\
\midrule
$Z$ & $[000]$, $[110]$ & $F_{2}$ & $[000]$, $[001]$ & $Y_{2}$ & $[001]$\\
$Y_{3}$ & $[100]$ & $Y_{4}$ & $[110]$ & $Y_{5}$ & $[010]$\\
$R_{1}$ & $[011]$, $[111]$ & $R_{2}$ & $[100]$, $[110]$ & $R_{3}$ & $[011]$, $[111]$\\
$R_{4}$ & $[000]$, $[001]$, $[110]$ & $R_{5}$ & $[100]$, $[101]$, $[110]$ &  & \\
\bottomrule
\end{tabular*}
\par\endgroup
\smallskip
\begingroup\centering
\scriptsize\setlength{\tabcolsep}{2.2pt}\renewcommand{\arraystretch}{0.76}
\textit{Elimination rounds}\\[1pt]
\begin{tabular*}{0.98\textwidth}{@{\extracolsep{\fill}}c l l | c l l@{}}
\toprule
round & profile & comparison & round & profile & comparison\\
\midrule
1 & $Z[000]$ & $v\preceq c_{1}$ & 1 & $Z[110]$ & $v\preceq c_{5}$\\
1 & $Y_{2}[001]$ & $v\preceq f$ & 1 & $R_{1}[011]$ & $c_{1}\preceq v$\\
1 & $R_{3}[111]$ & $c_{3}\preceq v$ & 1 & $R_{4}[110]$ & $c_{4}\preceq v$\\
2 & $Y_{4}[110]$ & $c_{5}\preceq v$ & 3 & $R_{2}[100]$ & $c_{2}\preceq v$\\
4 & $R_{5}[110]$ & $v\preceq c_{5}$ &  &  & \\
\bottomrule
\end{tabular*}
\par\endgroup
\smallskip\noindent\textit{Survivors:} $R_{4}[000]$, $F_{2}[000]$, $R_{5}[100]$, $Y_{3}[100]$, $Y_{5}[010]$, $R_{2}[110]$, $R_{4}[001]$, $F_{2}[001]$, $R_{5}[101]$, $R_{3}[011]$, $R_{1}[111]$.
\par\noindent\textit{Core comparison:} $c_{2}\preceq q$; the survivors containing $c_{2}$ but not $q$ are $Y_{5}[010]$.
\par\noindent\textit{Obstructions used:} \hyperlink{obs:B31}{B.31}.
\end{certblock}

\begin{certblock}{A.20}\phantomsection\label{app:A20}\label{app:B20}
\noindent\textit{Core:} $K=(C;\,f,\,g,\,q,\,b)$ with $f\in F_{1}$, $g\in F_{2}[0]$, $q\in R_{2}[11]$, $b\in Y_{5}[010]$.
\par\noindent\textit{Additional assumptions:} No vertex outside the core lies in $U$, $F_{1}$, $F_{3}$, $F_{4}$, $F_{5}$, or $Y_{1}$.
\par\noindent\textit{Directly excluded:} $Y_{2}[0011]$ (\hyperlink{obs:B32}{B.32}), $R_{5}[1011]$ (\hyperlink{obs:B33}{B.33}).
\par\smallskip
\begingroup\centering
\scriptsize\setlength{\tabcolsep}{2.2pt}\renewcommand{\arraystretch}{0.80}
\textit{Admissible profiles}\\[1pt]
\begin{tabular*}{0.98\textwidth}{@{\extracolsep{\fill}}c l | c l | c l@{}}
\toprule
cycle type & strings & cycle type & strings & cycle type & strings\\
\midrule
$Z$ & $[0000]$, $[1100]$ & $F_{2}$ & $[0011]$ & $Y_{2}$ & $[0010]$, $[0011]$\\
$Y_{3}$ & $[1001]$ & $Y_{4}$ & $[1101]$ & $Y_{5}$ & $[0100]$\\
$R_{1}$ & $[0110]$, $[1110]$ & $R_{2}$ & $[1001]$, $[1100]$ & $R_{3}$ & $[0111]$, $[1111]$\\
$R_{4}$ & $[0000]$, $[1100]$ & $R_{5}$ & $[1001]$, $[1011]$, $[1101]$ &  & \\
\bottomrule
\end{tabular*}
\par\endgroup
\smallskip
\begingroup\centering
\scriptsize\setlength{\tabcolsep}{2.2pt}\renewcommand{\arraystretch}{0.76}
\textit{Elimination rounds}\\[1pt]
\begin{tabular*}{0.98\textwidth}{@{\extracolsep{\fill}}c l l | c l l@{}}
\toprule
round & profile & comparison & round & profile & comparison\\
\midrule
1 & $Z[0000]$ & $v\preceq c_{1}$ & 1 & $Z[1100]$ & $v\preceq c_{3}$\\
1 & $Y_{2}[0010]$ & $v\preceq f$ & 1 & $Y_{3}[1001]$ & $c_{2}\preceq v$\\
1 & $Y_{4}[1101]$ & $c_{5}\preceq v$ & 1 & $Y_{5}[0100]$ & $b\preceq v$\\
1 & $R_{1}[0110]$ & $c_{1}\preceq v$ & 1 & $R_{1}[1110]$ & $c_{1}\preceq v$\\
1 & $R_{2}[1100]$ & $q\preceq v$ & 1 & $R_{3}[0111]$ & $v\preceq c_{3}$\\
1 & $R_{3}[1111]$ & $c_{3}\preceq v$ & 1 & $R_{4}[0000]$ & $v\preceq g$\\
1 & $R_{4}[1100]$ & $c_{4}\preceq v$ & 1 & $R_{5}[1001]$ & $v\preceq c_{5}$\\
2 & $F_{2}[0011]$ & $v\preceq g$ & 2 & $R_{2}[1001]$ & $c_{2}\preceq v$\\
3 & $R_{5}[1101]$ & $c_{5}\preceq v$ &  &  & \\
\bottomrule
\end{tabular*}
\par\endgroup
\smallskip\noindent\textit{Survivors:} None.
\par\noindent\textit{Obstructions used:} \hyperlink{obs:B32}{B.32}, \hyperlink{obs:B33}{B.33}, \hyperlink{obs:B34}{B.34}, \hyperlink{obs:B35}{B.35}, \hyperlink{obs:B36}{B.36}.
\end{certblock}

\section{Explicit obstruction witnesses}\label{app:obstruction-witnesses}

The table records the explicit induced copies of $B_{10}$ and $B_{11}$ used in
the certificates. Here $v$ and $w$ realize the listed profiles, $vw$ records
their adjacency, and the ordered copy is given in defining order. 

\begin{center}
\small
\setlength{\tabcolsep}{4pt}
\renewcommand{\arraystretch}{0.96}
\begin{longtable}{@{}l l l c l l@{}}
\toprule
row & profile of $v$ & profile of $w$ & $vw$ & ordered induced copy & used to exclude\\
\midrule
\endhead
\bottomrule
\endfoot

\multicolumn{6}{@{}l}{\textit{Certificate A.5:} $K=(C;\,f,\,a,\,g)$}\\[1pt]
\hypertarget{obs:B1}{\hypertarget{obs:C1}{B.1}} & $Y_{3}[110]$ & $R_{1}[111]$ & 0 & $B_{10}\,(c_{1},\,c_{2},\,c_{3},\,c_{4},\,c_{5},\,f,\,a,\,g,\,v,\,w)$ & $Y_{3}[110]$\\

\addlinespace[1pt]
\multicolumn{6}{@{}l}{\textit{Certificate A.6:} $K=(C;\,f,\,a,\,g,\,x)$}\\[1pt]
\hypertarget{obs:B2}{\hypertarget{obs:C2}{B.2}} & $Y_{5}[0110]$ & $R_{2}[1111]$ & 0 & $B_{11}\,(c_{1},\,c_{2},\,c_{3},\,c_{4},\,c_{5},\,f,\,a,\,g,\,x,\,v,\,w)$ & $R_{2}[1111]$, $Y_{5}[0110]$\\
\hypertarget{obs:B3}{\hypertarget{obs:C3}{B.3}} & $Y_{4}[1110]$ & $R_{2}[1011]$ & 0 & $B_{11}\,(c_{2},\,x,\,c_{4},\,g,\,c_{1},\,c_{5},\,c_{3},\,v,\,w,\,a,\,f)$ & $R_{2}[1011]$, $Y_{4}[1110]$\\
\hypertarget{obs:B4}{\hypertarget{obs:C4}{B.4}} & $Y_{4}[1110]$ & $R_{2}[1111]$ & 0 & $B_{10}\,(c_{1},\,c_{5},\,f,\,c_{3},\,a,\,c_{4},\,c_{2},\,w,\,v,\,x)$ & $R_{2}[1111]$, $Y_{4}[1110]$\\
\hypertarget{obs:B5}{\hypertarget{obs:C5}{B.5}} & $Y_{4}[1110]$ & $R_{1}[1110]$ & 0 & $B_{10}\,(c_{2},\,x,\,c_{5},\,g,\,v,\,c_{4},\,w,\,c_{1},\,f,\,a)$ & $R_{1}[1110]$\\
\hypertarget{obs:B6}{\hypertarget{obs:C6}{B.6}} & $R_{1}[1110]$ & $Y_{4}[1011]$ & 0 & $B_{11}\,(c_{1},\,a,\,c_{4},\,f,\,c_{2},\,c_{3},\,c_{5},\,w,\,v,\,x,\,g)$ & $R_{1}[1110]$, $Y_{4}[1011]$\\
\hypertarget{obs:B7}{\hypertarget{obs:C7}{B.7}} & $Y_{3}[1001]$ & $R_{1}[1111]$ & 0 & $B_{11}\,(c_{2},\,c_{1},\,c_{5},\,c_{4},\,c_{3},\,g,\,x,\,f,\,a,\,v,\,w)$ & $R_{1}[1111]$, $Y_{3}[1001]$\\
\hypertarget{obs:B8}{\hypertarget{obs:C8}{B.8}} & $Y_{3}[1101]$ & $R_{1}[1111]$ & 0 & $B_{10}\,(c_{1},\,c_{2},\,c_{3},\,c_{4},\,c_{5},\,f,\,a,\,g,\,v,\,w)$ & $R_{1}[1111]$, $Y_{3}[1101]$\\
\hypertarget{obs:B9}{\hypertarget{obs:C9}{B.9}} & $R_{2}[1011]$ & $Y_{4}[1011]$ & 0 & $B_{10}\,(c_{1},\,a,\,c_{3},\,f,\,w,\,c_{4},\,v,\,c_{2},\,g,\,x)$ & $R_{2}[1011]$\\
\hypertarget{obs:B10}{\hypertarget{obs:C10}{B.10}} & $Y_{4}[1011]$ & $R_{1}[1111]$ & 0 & $B_{10}\,(c_{2},\,c_{3},\,g,\,c_{5},\,x,\,c_{4},\,c_{1},\,w,\,v,\,a)$ & $R_{1}[1111]$, $Y_{4}[1011]$\\
\hypertarget{obs:B11}{\hypertarget{obs:C11}{B.11}} & $Y_{5}[0111]$ & $R_{2}[1111]$ & 0 & $B_{10}\,(c_{2},\,c_{1},\,c_{5},\,c_{4},\,c_{3},\,g,\,x,\,f,\,v,\,w)$ & $R_{2}[1111]$, $Y_{5}[0111]$\\
\hypertarget{obs:B12}{\hypertarget{obs:C12}{B.12}} & $Y_{5}[0110]$ & $Z[1111]$ & 1 & $B_{10}\,(x,\,c_{2},\,v,\,g,\,c_{4},\,c_{3},\,w,\,c_{5},\,a,\,f)$ & $Z[1111]$\\
\hypertarget{obs:B13}{\hypertarget{obs:C13}{B.13}} & $Y_{3}[1001]$ & $Z[1111]$ & 1 & $B_{10}\,(a,\,c_{1},\,v,\,f,\,c_{4},\,c_{5},\,w,\,c_{3},\,x,\,g)$ & $Z[1111]$\\

\addlinespace[1pt]
\multicolumn{6}{@{}l}{\textit{Claim~\ref{claim:Y1}, proof that $a$
is anticomplete to $Y_3\cup Y_4$:} $K=(C;\,f,\,a,\,y)$}\\[1pt]
\hypertarget{obs:B14}{\hypertarget{obs:C14}{B.14}} & $Y_{5}[010]$ & $R_{5}[110]$ & 1 & $B_{10}\,(c_{1},\,c_{2},\,f,\,c_{4},\,a,\,c_{3},\,c_{5},\,w,\,y,\,v)$ & Claim~\ref{claim:Y1}, Step~1\\

\addlinespace[1pt]
\multicolumn{6}{@{}l}{\textit{Certificate A.7:} $K=(C;\,f,\,a,\,y,\,p,\,q)$}\\[1pt]
\hypertarget{obs:B15}{\hypertarget{obs:C15}{B.15}} & $Y_{1}[00111]$ & --- & --- & $B_{10}\,(c_{1},\,c_{2},\,f,\,c_{4},\,v,\,c_{3},\,c_{5},\,q,\,y,\,p)$ & \emph{direct}\\
\hypertarget{obs:B16}{\hypertarget{obs:C16}{B.16}} & $R_{5}[11010]$ & --- & --- & $B_{10}\,(c_{1},\,c_{2},\,f,\,c_{4},\,a,\,c_{3},\,c_{5},\,v,\,y,\,p)$ & \emph{direct}\\

\addlinespace[1pt]
\multicolumn{6}{@{}l}{\textit{Certificate A.8:} $K=(C;\,f,\,a,\,y,\,p,\,q)$}\\[1pt]
\hypertarget{obs:B17}{\hypertarget{obs:C17}{B.17}} & $R_{5}[10111]$ & --- & --- & $B_{10}\,(c_{1},\,c_{2},\,c_{3},\,c_{4},\,v,\,f,\,a,\,q,\,y,\,p)$ & \emph{direct}\\
\hypertarget{obs:B18}{\hypertarget{obs:C18}{B.18}} & $Y_{4}[10101]$ & --- & --- & $B_{11}\,(a,\,p,\,f,\,v,\,c_{1},\,c_{2},\,c_{4},\,y,\,q,\,c_{5},\,c_{3})$ & \emph{direct}\\

\addlinespace[1pt]
\multicolumn{6}{@{}l}{\textit{Certificate A.11:} $K=(C;\,f,\,a,\,x,\,q')$}\\[1pt]
\hypertarget{obs:B19}{\hypertarget{obs:C19}{B.19}} & $R_{2}[1110]$ & $Y_{5}[0111]$ & 0 & $B_{11}\,(c_{2},\,c_{1},\,a,\,c_{4},\,f,\,q',\,x,\,c_{3},\,c_{5},\,v,\,w)$ & $R_{2}[1110]$, $Y_{5}[0111]$\\
\hypertarget{obs:B20}{\hypertarget{obs:C20}{B.20}} & $R_{1}[1101]$ & $Y_{3}[1011]$ & 0 & $B_{11}\,(c_{1},\,c_{2},\,f,\,c_{4},\,a,\,c_{3},\,c_{5},\,q',\,x,\,v,\,w)$ & $R_{1}[1101]$, $Y_{3}[1011]$\\
\hypertarget{obs:B21}{\hypertarget{obs:C21}{B.21}} & $R_{2}[1110]$ & $R_{4}[0011]$ & 1 & $B_{10}\,(c_{5},\,c_{1},\,v,\,c_{3},\,c_{4},\,a,\,w,\,f,\,x,\,q')$ & $R_{4}[0011]$\\
\hypertarget{obs:B22}{\hypertarget{obs:C22}{B.22}} & $R_{1}[1101]$ & $R_{4}[0011]$ & 1 & $B_{10}\,(x,\,c_{2},\,v,\,q',\,c_{4},\,f,\,w,\,a,\,c_{5},\,c_{3})$ & $R_{4}[0011]$\\
\hypertarget{obs:B23}{\hypertarget{obs:C23}{B.23}} & $Y_{3}[1011]$ & $R_{1}[1111]$ & 0 & $B_{10}\,(c_{2},\,c_{1},\,a,\,c_{4},\,f,\,q',\,x,\,c_{3},\,w,\,v)$ & $R_{1}[1111]$, $Y_{3}[1011]$\\

\addlinespace[1pt]
\multicolumn{6}{@{}l}{\textit{Certificate A.13:} $K=(C;\,f,\,a,\,r,\,y)$}\\[1pt]
\hypertarget{obs:B24}{\hypertarget{obs:C24}{B.24}} & $Y_{2}[0101]$ & $R_{5}[1111]$ & 0 & $B_{11}\,(c_{1},\,c_{2},\,f,\,c_{4},\,a,\,c_{3},\,c_{5},\,w,\,v,\,r,\,y)$ & $R_{5}[1111]$, $Y_{2}[0101]$\\
\hypertarget{obs:B25}{\hypertarget{obs:C25}{B.25}} & $Y_{5}[0101]$ & $R_{3}[0011]$ & 1 & $B_{10}\,(r,\,a,\,v,\,y,\,f,\,c_{3},\,w,\,c_{5},\,c_{2},\,c_{4})$ & $R_{3}[0011]$\\
\hypertarget{obs:B26}{\hypertarget{obs:C26}{B.26}} & $Y_{1}[0011]$ & $R_{5}[1111]$ & 0 & $B_{10}\,(c_{1},\,c_{2},\,c_{3},\,c_{4},\,w,\,f,\,v,\,a,\,y,\,r)$ & $Y_{1}[0011]$\\
\hypertarget{obs:B27}{\hypertarget{obs:C27}{B.27}} & $Y_{2}[0111]$ & $R_{5}[1111]$ & 0 & $B_{10}\,(c_{2},\,c_{1},\,a,\,c_{4},\,f,\,w,\,v,\,c_{3},\,r,\,y)$ & $R_{5}[1111]$, $Y_{2}[0111]$\\

\addlinespace[1pt]
\multicolumn{6}{@{}l}{\textit{Certificate A.17:} $K=(C;\,f,\,g,\,r,\,y)$}\\[1pt]
\hypertarget{obs:B28}{\hypertarget{obs:C28}{B.28}} & $R_{3}[0111]$ & --- & --- & $B_{10}\,(c_{2},\,c_{1},\,g,\,c_{4},\,f,\,c_{5},\,v,\,c_{3},\,r,\,y)$ & \emph{direct}\\
\hypertarget{obs:B29}{\hypertarget{obs:C29}{B.29}} & $R_{1}[1100]$ & $R_{3}[0101]$ & 1 & $B_{10}\,(c_{2},\,c_{1},\,g,\,c_{4},\,f,\,c_{5},\,w,\,c_{3},\,v,\,y)$ & $R_{3}[0101]$\\
\hypertarget{obs:B30}{\hypertarget{obs:C30}{B.30}} & $R_{3}[0101]$ & $R_{5}[1011]$ & 1 & $B_{11}\,(c_{1},\,c_{2},\,f,\,c_{4},\,g,\,c_{3},\,w,\,c_{5},\,v,\,r,\,y)$ & $R_{3}[0101]$, $R_{5}[1011]$\\

\addlinespace[1pt]
\multicolumn{6}{@{}l}{\textit{Certificate A.19:} $K=(C;\,f,\,g,\,q)$}\\[1pt]
\hypertarget{obs:B31}{\hypertarget{obs:C31}{B.31}} & $Y_{5}[010]$ & $Y_{2}[001]$ & 1 & $B_{10}\,(c_{2},\,c_{1},\,c_{5},\,c_{4},\,c_{3},\,g,\,w,\,f,\,v,\,q)$ & $Y_{2}[001]$\\

\addlinespace[1pt]
\multicolumn{6}{@{}l}{\textit{Certificate A.20:} $K=(C;\,f,\,g,\,q,\,b)$}\\[1pt]
\hypertarget{obs:B32}{\hypertarget{obs:C32}{B.32}} & $Y_{2}[0011]$ & --- & --- & $B_{10}\,(c_{2},\,c_{1},\,c_{5},\,c_{4},\,c_{3},\,g,\,v,\,f,\,b,\,q)$ & \emph{direct}\\
\hypertarget{obs:B33}{\hypertarget{obs:C33}{B.33}} & $R_{5}[1011]$ & --- & --- & $B_{10}\,(c_{1},\,c_{2},\,f,\,c_{4},\,g,\,c_{3},\,v,\,c_{5},\,q,\,b)$ & \emph{direct}\\
\hypertarget{obs:B34}{\hypertarget{obs:C34}{B.34}} & $Y_{5}[0100]$ & $Y_{2}[0010]$ & 1 & $B_{10}\,(c_{2},\,c_{1},\,c_{5},\,c_{4},\,c_{3},\,g,\,w,\,f,\,v,\,q)$ & $Y_{2}[0010]$\\
\hypertarget{obs:B35}{\hypertarget{obs:C35}{B.35}} & $R_{2}[1100]$ & $R_{5}[1001]$ & 1 & $B_{10}\,(c_{1},\,c_{2},\,f,\,c_{4},\,g,\,c_{3},\,w,\,c_{5},\,v,\,b)$ & $R_{5}[1001]$\\
\hypertarget{obs:B36}{\hypertarget{obs:C36}{B.36}} & $R_{5}[1001]$ & $R_{3}[0111]$ & 1 & $B_{11}\,(c_{2},\,c_{1},\,g,\,c_{4},\,f,\,c_{5},\,w,\,c_{3},\,v,\,q,\,b)$ & $R_{3}[0111]$, $R_{5}[1001]$\\

\end{longtable}
\end{center}

\end{document}